\theoremstyle{plain}
\newtheorem{theorem}{Theorem}[section]
\newtheorem{proposition}[theorem]{Proposition}
\newtheorem{lemma}[theorem]{Lemma}
\newtheorem{corollary}[theorem]{Corollary}
\newtheorem{remark}[theorem]{Remark}
\theoremstyle{definition}
\newtheorem{example}{Example}
\newcommand{\bydef}{\stackrel{\textnormal{\tiny def}}{=}}
\newcommand{\R}{\mathbb{R}}
\newcommand{\LP}{G_{\text{\textup{LP}}}}
\newcommand{\bc}{\mathbf{c}}
\newcommand{\talpha}{{\tilde{\alpha}}}
\newcommand{\tx}{{\tilde{\chi}}}
\newcommand{\bx}{{\bar{\chi}}}
\newcommand{\order}{K}
\newcommand{\cheb}{\mathcal{T}}
\newcommand{\conjugate}{\mathcal{S}}
\begin{document}
\title{Constructive proofs for localized radial solutions \\ of semilinear elliptic systems on $\R^d$}
\author{
Jan Bouwe van den Berg \thanks{VU Amsterdam, Department of Mathematics, De Boelelaan 1081, 1081 HV Amsterdam, The Netherlands. {\tt janbouwe@few.vu.nl}}
\and
Olivier Hénot \thanks{McGill University, Department of Mathematics and Statistics, 805 Sherbrooke Street West, Montreal, QC, H3A 0B9, Canada. {\tt olivier.henot@mail.mcgill.ca}}
\and
Jean-Philippe Lessard \thanks{McGill University, Department of Mathematics and Statistics, 805 Sherbrooke Street West, Montreal, QC, H3A 0B9, Canada. {\tt jp.lessard@mcgill.ca}}
}

\date{}

\maketitle


\begin{abstract}
Ground state solutions of elliptic problems have been analyzed extensively in the theory of partial differential equations, as they represent fundamental  spatial patterns in many model equations. While the results for scalar equations, as well as certain specific classes of elliptic systems, are comprehensive, much less is known about these localized solutions in generic systems of nonlinear elliptic equations. In this paper we present a general method to prove constructively the existence of localized radially symmetric solutions of elliptic systems on $\R^d$. Such solutions are essentially described by systems of non-autonomous ordinary differential equations. We study these systems using dynamical systems theory and computer-assisted proof techniques, combining a suitably chosen Lyapunov-Perron operator with a Newton-Kantorovich type theorem. We demonstrate the power of this methodology by proving specific localized radial solutions of the cubic Klein-Gordon equation on $\R^3$, the Swift-Hohenberg equation on $\R^2$, and a three-component FitzHugh-Nagumo system on $\R^2$. These results illustrate that ground state solutions in a wide range of elliptic systems are tractable through constructive proofs.
\end{abstract}

\begin{center}
{\bf \small Key words.}
{\small Semilinear elliptic systems, Unbounded domains, Radial solutions, Computer-assisted proof, Lyapunov-Perron operator, Newton-Kantorovich theorem}
\end{center}


\section{Introduction} \label{sec:introduction}

The study of solutions of semilinear elliptic partial differential equations (PDEs) on $\R^d$ is vast and has received a tremendous amount of attention since the middle of the twentieth century. The fact that these PDEs are posed on all of $\R^d$ often forces \emph{radially symmetric} solutions to exist, that is to say solutions of the form $U(x) = u((x_1^2 + \ldots + x_d^2)^{1/2})$. A canonical example of such a result is given by the scalar equation 
\begin{equation}\label{eq:classical}
\Delta U + U^p=0, \qquad U=U(x) \in \R, \quad x \in \R^d,
\end{equation}
for which it was proven in the case $d \ge 3$ and critical exponent $p=(d+2)/(d-2)$ that all the positive solutions are radially symmetric (e.g. see \cite{MR634248,MR982351,MR1121147}). Existence of radial solutions for more general scalar models
\[
\Delta U + \mathbf{N}(U)=0, \qquad U=U(x) \in \R, \quad x \in \R^d,
\]
with $\mathbf{N}$ nonlinear, was proven using variational methods \cite{MR454365,MR695535,MR695536,MR165176}, dynamical systems techniques \cite{MR846391}, shooting methods \cite{MR931016,MR1033193} and moving plane techniques \cite{MR1158936}, while uniqueness was also extensively investigated \cite{MR886933,MR333489,MR682268,MR829369}.

The generalization of these methods to systems of elliptic equations is hindered by the fact that variational techniques, shooting methods and the maximum principle are in general only available for scalar equations. Thus, it is not surprising that existence results for radial solutions of systems of semilinear elliptic equations are more sparse. Nevertheless, there are several examples of existence results for radial solutions on $\R^d$ for systems with specific structures. For instance, \emph{cooperative} systems \cite{MR1755067} or systems involving the notion of $c$-positive functions \cite{MR1765572} (in both case, a form of maximum principle holds), Lane-Emden systems \cite{MR1363170,MR1645728} and Hamiltonian systems~\cite{MR1620581}. It remains however a challenging problem to obtain proofs of radial solutions for elliptic systems without any additional structure.

In this paper, we propose a general (computer-assisted) technique to constructively prove the existence of localized radial solutions of elliptic semilinear systems of the form
\begin{equation}\label{eq:elliptic}
\Delta U + \mathbf{N}(U) = 0, \qquad U = U(x) \in \mathbb{R}^q, \quad x \in \mathbb{R}^d,
\end{equation}
where $\mathbf{N}$ is a nonlinear function. For the sake of clarity and uniformity of the presentation, we assume that $\mathbf{N}$ is polynomial; although, as made clear in Remark~\ref{rem:nonlinearity}, this restriction may be lifted to study certain non-polynomial elliptic systems via essentially the same methodology. Furthermore, since our interest lies in localized patterns, we assume the existence of a zero of $\mathbf{N}$, that is to say a constant solution $c \in \mathbb{R}^q$ of \eqref{eq:elliptic}, such that the Jacobian matrix $D\mathbf{N}(c)$ does not have any eigenvalues in $[0,\infty)$. This hypothesis will be justified later. It suffices to say here that it corresponds to a well-defined notion of hyperbolicity of the constant state. The condition is natural from that perspective, but does exclude the case of a monomial nonlinearity like the one in~\eqref{eq:classical}.

We then look for solutions of the form $U(x) = u(r(x))$, where $r(x) \bydef (x_1^2 + \ldots + x_d^2)^{1/2}$, with asymptotic behaviour
\begin{equation}\label{eq:asymptoticBC} 
\lim_{r \to \infty} u(r) = c \quad\text{and}\quad \lim_{r \to \infty} u'(r) = 0.
\end{equation}
The function $u$ solves the second-order non-autonomous system of ordinary differential equations (ODEs)
\begin{equation} \label{eq:radial_elliptic}
\left(\frac{d^2}{d r^2} + \frac{d-1}{r} \frac{d}{d r}\right) u(r) + \mathbf{N}(u(r)) = 0, \qquad u(r) \in \mathbb{R}^q, \quad r \in [0,\infty).
\end{equation}

Our strategy for proving existence of localized radial solutions of \eqref{eq:elliptic} reduces to establishing solutions of \eqref{eq:radial_elliptic} satisfying the boundary condition $u'(0) = 0$ and asymptotic behaviour~\eqref{eq:asymptoticBC}.
We subdivide the domain $[0,\infty)$ of $u$ into the three subdomains $[0,l_1]$, $[l_1,l_2]$ and $[l_2,\infty)$, with $0<l_1<l_2$, on which we solve~\eqref{eq:radial_elliptic} with different methods, namely a Taylor expansion on $[0,l_1]$, a Chebyshev expansion on $[l_1,l_2]$ and a center-manifold parametrization (graph) on $[l_2,\infty)$. Let us briefly describe each method separately, as well as how they fit together.

For $r \in [0,l_1]$ we substitute a Taylor series into \eqref{eq:radial_elliptic} and look for a solution (in the space of Taylor series coefficients) of the resulting recurrence relation, where the leading Taylor coefficient $u(0)$ is a priori unknown. 

For $r \ge l_1$, we rewrite \eqref{eq:radial_elliptic} as a $(2q+1)$-dimensional system of first-order polynomial autonomous ODEs. This technique is commonly used in the literature (e.g.\ see~\cite[\S~3]{MR2475557}). More explicitly, we introduce $w^{(1)}, w^{(2)}, w^{(3)}$ representing the maps $r \mapsto r^{-1}, r \mapsto u(r), r \mapsto \frac{d}{d r} u(r)$ respectively. Denoting $w = (w^{(1)}, w^{(2)}, w^{(3)}) $, we obtain
\begin{equation}\label{eq:autonomous_ode}
\frac{d}{d r} w(r)
= f(w(r)) \bydef
\begin{pmatrix}
-w^{(1)}(r)^2 \\
w^{(3)}(r) \\
- (d-1) w^{(1)}(r) w^{(3)}(r) - \mathbf{N}(w^{(2)}(r))
\end{pmatrix}.
\end{equation}
For a fixed $l_2>l_1$, we solve \eqref{eq:autonomous_ode} on $[l_1,l_2]$ using Chebyshev series. 

Since the equilibrium $\bc = (0,c,0) \in \mathbb{R} \times \mathbb{R}^q \times \mathbb{R}^q$ of \eqref{eq:autonomous_ode} represents the constant equilibrium $c \in \mathbb{R}^q$ of~\eqref{eq:radial_elliptic} in the limit $r \to \infty$, a localized radial solution converges to $\bc$ in system \eqref{eq:autonomous_ode}. As explained in detail in Section~\ref{sec:graph_enclosure}, the assumption that $D\mathbf{N}(c)$ does not have eigenvalues in $[0,\infty)$ implies that $\bc$ has a $q+1$-dimensional center-stable manifold.
Revisiting Chapter 4.1 of \cite{Chicone}, we use a \emph{Lyapunov-Perron} operator to control the center-stable manifold of $\bc$. 
In comparison with the estimates in \cite{Chicone}, in Section~\ref{sec:graph_enclosure} we leverage our precise knowledge of the center manifold, and the dynamics therein, to widen the verifiable domain of the local graph of the center-stable manifold and to improve control on the location of the manifold in terms of (generalized) Lipschitz bounds.
Thus, this method is tailored to satisfy the minimum conditions required by the Newton-Kantorovich type computer-assisted proof method we are aiming for.
Its simplicity comes at the cost of ceasing a $C^1$ control, and potentially relinquishing a larger domain for the local graph.
An alternative perspective is offered by an approach based on \emph{cone conditions} as presented in \cite{MR3463691}, which offers a $C^1$ bound on the local graph of the non-autonomous stable manifold of the constant equilibrium $c$ of \eqref{eq:radial_elliptic}.

The three strategies on the three subintervals are combined to define a projected boundary value problem (BVP) with a boundary condition at $r=l_2$ forcing the solution to lie in a center-stable manifold. The final step is to solve the BVP with a Newton-Kantorovich type theorem of which the hypotheses are verified with the help of a computer. This computer-assisted proof technique, referred to as the \emph{radii polynomial approach}, is set in a Banach space of the coefficients of some series representation of the solution, see e.g.~\cite{MR3917433,MR4417102,MR3323206,MR3454370} for some relevant works. 
As shown in Section~\ref{sec:newton-kantorovich}, the (generalized) Lipschitz bounds on the center-stable manifold that originate from the Lyapunov-Perron method fit seamlessly into the estimates needed for the Newton-Kantorovich type method. This leads to a constructive proof and yields a rigorous $C^0$-error bound on a numerical approximation of the desired localized radial solution of \eqref{eq:elliptic}. 
To illustrate the efficacy of the method we present three examples, for which the details are provided in Section~\ref{sec:applications}.


\begin{example}\label{ex:exampleKG}
The cubic \emph{Klein-Gordon} equation
\begin{equation}\label{eq:exampleKG}
U_{tt} = \Delta U - U + \beta_1 U^2 +\beta_2 U^3, \qquad U = U(t, x) \in \mathbb{R}, \quad t \ge 0, \quad x \in \R^3,
\end{equation}
with parameters $\beta_1, \beta_2 \in \R$, is a relativistic wave equation serving as a model in scalar field theory (see e.g. \cite{MR3930829}). Ground states of \eqref{eq:exampleKG} solve~\eqref{eq:elliptic} with $q=1$, $d=3$ and $\mathbf{N}(U)=-U+\beta_1 U^2 +\beta_2 U^3$. It is known that for any positive values of $\beta_2$ and any given integer $m \ge 0$, there exists a localized radial stationary solution of \eqref{eq:exampleKG} with exactly $m$ zeros (see e.g. \cite{MR846391}).
Our method allows the rigorous computation of the profile of such solutions which, to the best of our knowledge, constitutes a novel result.
We prove the following constructive existence result, see Section~\ref{sec:3DKG}.
\begin{theorem}\label{thm:exampleKG}
For $\beta_1 = \beta_2 = 1$, there exist two distinct localized radial stationary solutions of the Klein-Gordon equation~\eqref{eq:exampleKG} on $\R^3$.
Both solutions lie, respectively, within a distance $2.9 \times 10^{-7}$ and $5.5 \times 10^{-6}$ (in the $C^0$-norm) of their numerical approximation depicted in Figure~\ref{fig:exampleKG}.
\end{theorem}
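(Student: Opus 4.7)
The plan is to apply the general computer-assisted framework outlined above to the specific scalar nonlinearity $\mathbf{N}(u) = -u + u^2 + u^3$ (so $q=1$, $d=3$). First I would check the structural hypothesis: $c=0$ is a zero of $\mathbf{N}$, and $D\mathbf{N}(0) = -1$ has only the eigenvalue $-1 \notin [0,\infty)$, so the hyperbolicity assumption of the introduction is satisfied. Consequently, the equilibrium $\bc = (0,0,0)$ of the autonomous system~\eqref{eq:autonomous_ode} admits a two-dimensional center-stable manifold on which the Lyapunov-Perron operator of Section~\ref{sec:graph_enclosure} applies.

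Next I would produce two numerical candidate profiles $\bar u$, one corresponding to a positive bump (the ground state) and one to a sign-changing solution with a single internal zero; their existence for this parameter range is guaranteed by the shooting results in~\cite{MR846391}, and they are plausibly the two solutions displayed in Figure~\ref{fig:exampleKG}. Each candidate is encoded as a triple: a finite Taylor expansion of order $\order_{\mathrm{T}}$ on $[0,l_1]$ (with leading coefficient $u(0)$ treated as an unknown), a finite Chebyshev expansion of order $\order_{\mathrm{C}}$ on $[l_1,l_2]$ for the first-order system \eqref{eq:autonomous_ode}, and a point on the parametrized center-stable graph at $r = l_2$, together with interface matching at $r=l_1$ and $r=l_2$.

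The heart of the proof is then the assembly of the associated zero-finding map $F$ on a Banach space of coefficient sequences with geometric/algebraic weights, and the verification of the Newton-Kantorovich type hypotheses of Section~\ref{sec:newton-kantorovich}. Using interval arithmetic, I would compute the residual bound $Y$ and the contraction bound $Z(r)$; crucially, the boundary term at $r=l_2$ contributes via the generalized Lipschitz bounds on the center-stable manifold from Section~\ref{sec:graph_enclosure}, which the framework is designed to slot in. If the radii polynomial $Z(r)-r+Y$ has a positive root $r_\star$, the theorem yields an exact solution of the projected BVP within distance $r_\star$ (in the Banach-space norm) of the numerical approximation, and one then converts this into the advertised $C^0$-bounds $2.9\times 10^{-7}$ and $5.5\times 10^{-6}$ by summing tail bounds of the Taylor and Chebyshev series and invoking the graph enclosure on $[l_2,\infty)$.

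The main obstacle is the joint tuning of the discretization parameters $(l_1,l_2,\order_{\mathrm{T}},\order_{\mathrm{C}})$ and the radius of validity of the Lyapunov-Perron graph so that $Y$ and $Z$ are simultaneously small enough to produce a valid radii polynomial, while keeping the interval-arithmetic enclosures tight; the sign-changing profile is steeper and takes longer to enter the linear regime near $\bc$, which almost certainly forces a larger $l_2$ and higher $\order_{\mathrm{C}}$ and accounts for its roughly twenty-fold larger error bound. A secondary subtlety is verifying rigorously that the candidate terminal state at $r=l_2$ lies strictly inside the validated domain of the center-stable graph, which depends on the decay of $\bar u$ at $l_2$ being fast enough relative to the Lipschitz constants produced by the Lyapunov-Perron estimates.
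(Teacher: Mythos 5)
Your proposal follows essentially the same route as the paper: verify the hyperbolicity of $c=0$ (here $\Gamma=\Lambda=1$, $\hat\lambda=1$), enclose the center-stable manifold via the Lyapunov--Perron estimates of Proposition~\ref{prop:graph}, assemble the Taylor/Chebyshev/manifold zero-finding map $F$, and verify the Newton--Kantorovich inequalities of Theorem~\ref{thm:radii_polynomial} with interval arithmetic, with the $C^0$ bound then following as in Corollary~\ref{cor:C0bound}. This matches the paper's proof in Section~\ref{sec:3DKG}, including your (correct) diagnosis that the sign-changing profile demands a substantially larger Chebyshev truncation order.
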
	
\end{example}

\begin{figure}
\centering
\begin{subfigure}[b]{0.4\textwidth}
\centering
\includegraphics[width=\textwidth]{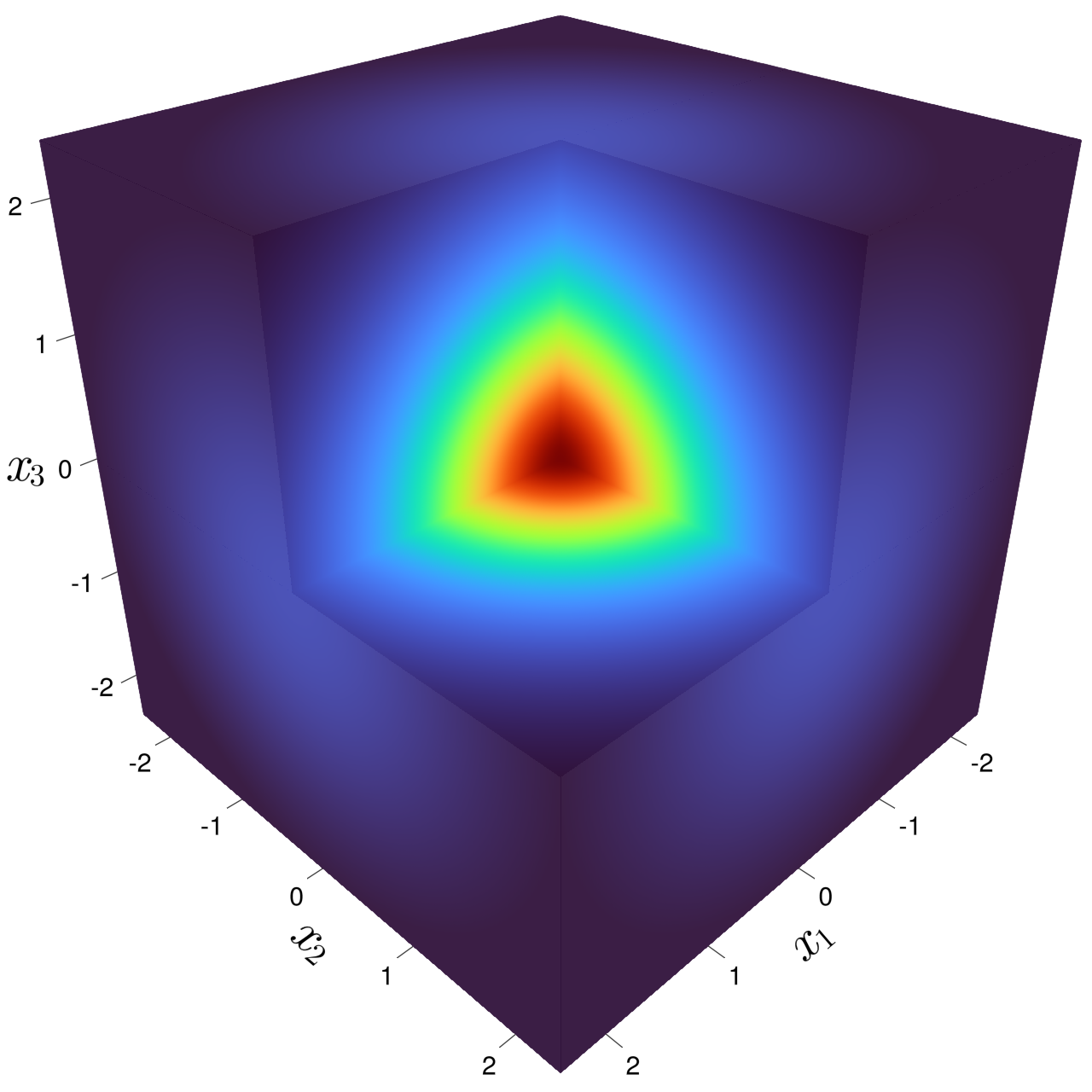}
\caption{}
\end{subfigure}
\hspace{0.1\textwidth}
\begin{subfigure}[b]{0.4\textwidth}
\centering
\includegraphics[width=\textwidth]{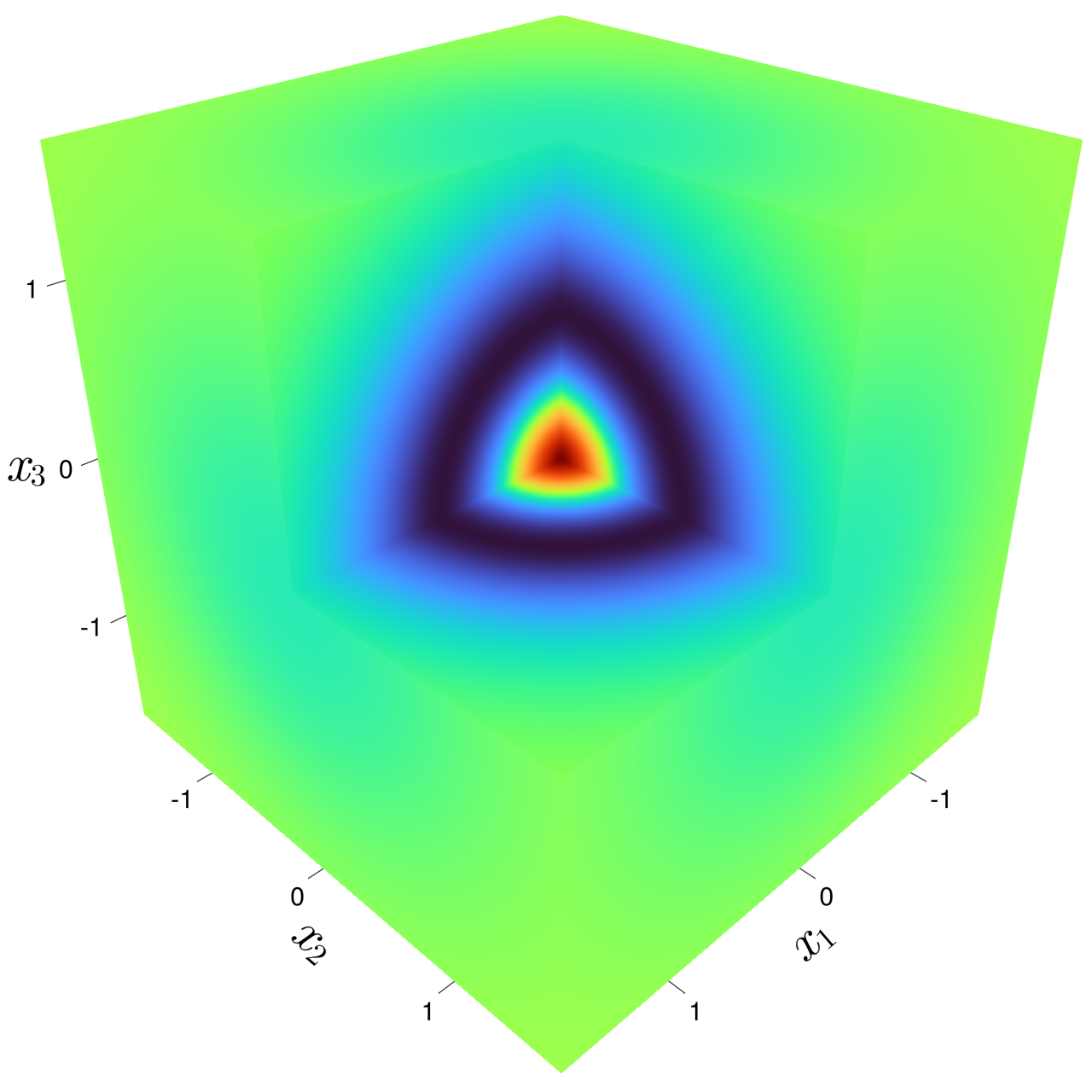}
\caption{}
\end{subfigure}
\caption{Numerical localized radial stationary solutions of the cubic Klein-Gordon equation \eqref{eq:exampleKG} on $\R^3$ at parameter values $\beta_1 = \beta_2 = 1$. (a) This approximation is proven to be $2.9 \times 10^{-7}$ close (in $C^0$-norm) to a strictly positive localized radial stationary solution. (b) This approximation is proven to be $5.5 \times 10^{-6}$ close (in $C^0$-norm) to a localized radial stationary solution with one zero.}
\label{fig:exampleKG}
\end{figure}


\begin{example}\label{ex:exampleSH}
The \emph{Swift-Hohenberg} equation on $\R^2$
\begin{equation}\label{eq:exampleSH}
U_t = -(\beta_4 + \Delta)^2 U + \beta_1 U + \beta_2 U^2 + \beta_3 U^3, \qquad U = U(t, x) \in \mathbb{R}, \quad t \ge 0, \quad x \in \R^2,
\end{equation}
with parameters $\beta_1, \beta_2, \beta_3, \beta_4 \in \R$, was introduced in \cite{SwiftHohenberg} to model thermal fluctuations. Notably, the Swift-Hohenberg equation has been studied extensively in the field of pattern formation.
Ground states of \eqref{eq:exampleSH} solve~\eqref{eq:elliptic} with $q=2$, $d=2$ and $\mathbf{N}(U_1,U_2)=(\beta_4 U_1-U_2,\beta_4 U_2 -\beta_1 U_1 -\beta_2 U_1^2 -\beta_3 U_1^3)$.
Existence results for localized stationary \emph{rings} of \eqref{eq:exampleSH} were obtained in \cite{MR2475557} under the assumption $0 < -\beta_1 \ll 1$ (note that $-\beta_1$ is denoted by $\mu$ in \cite{MR2475557}). The amplitude of those solutions is $O((-\beta_1)^{1/2})$. 
Our method is complementary, as we do not impose a condition on the smallness of $-\beta_1$. 
We prove the following constructive existence result for $-\beta_1 = 3/5$, see Section~\ref{sec:2DSH}.
\begin{theorem}\label{thm:exampleSH}
For $(\beta_1, \beta_2, \beta_3, \beta_4) = (-\tfrac{3}{5},\sqrt{6},-\tfrac{1}{10},1)$, there exists a localized radial stationary solution of the Swift-Hohenberg equation~\eqref{eq:exampleSH} on $\R^2$. This solution lies within distance $2.4 \times 10^{-5}$  (in the $C^0$-norm) of the numerical approximation depicted in Figure~\ref{fig:exampleSH}.
\end{theorem}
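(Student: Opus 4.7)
The plan is to instantiate the three-block computer-assisted scheme described above at the specific parameters $(\beta_1,\beta_2,\beta_3,\beta_4)=(-\tfrac{3}{5},\sqrt{6},-\tfrac{1}{10},1)$ with $q=2$, $d=2$, and the polynomial nonlinearity $\mathbf{N}(U_1,U_2)=(\beta_4 U_1 - U_2,\,\beta_4 U_2 - \beta_1 U_1 - \beta_2 U_1^2 - \beta_3 U_1^3)$. First I would verify the standing hyperbolicity assumption at the trivial equilibrium $c=(0,0)$: the Jacobian $D\mathbf{N}(0)=\bigl(\begin{smallmatrix}1 & -1 \\ 3/5 & 1\end{smallmatrix}\bigr)$ has eigenvalues $1 \pm i\sqrt{3/5}$, which lie off $[0,\infty)$, so the center-stable manifold construction of Section~\ref{sec:graph_enclosure} and the Newton-Kantorovich framework of Section~\ref{sec:newton-kantorovich} apply. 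The corresponding equilibrium $\bc=(0,0,0,0,0)\in\R^5$ of \eqref{eq:autonomous_ode} then admits a three-dimensional center-stable manifold, with one neutral direction inherited from the $w^{(1)}$-coordinate and two stable directions coming from the negative-real-part square roots of the eigenvalues of $-D\mathbf{N}(0)$.

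Next I would compute a high-accuracy numerical approximation by selecting two matching radii $0<l_1<l_2$, a Taylor order $K_T$, and a Chebyshev order $K_C$, and representing the candidate profile on $[0,l_1]$ by its Taylor polynomial centered at $0$ (with the boundary condition $u'(0)=0$ built into the Taylor ansatz), on $[l_1,l_2]$ by a truncated Chebyshev series for the five-dimensional first-order system \eqref{eq:autonomous_ode}, and at $r=l_2$ by a point in the local center-stable chart at $\bc$. Matching conditions enforce $C^1$ continuity between the Taylor and Chebyshev blocks at $r=l_1$, and the graph constraint forces the Chebyshev endpoint to lie on the center-stable manifold at $r=l_2$. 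Assembling these constraints yields a zero-finding problem $F(x)=0$ on a Banach space $X$ of coefficient sequences, weighted $\ell^1$ in the Taylor and Chebyshev blocks, chosen to be a Banach algebra so that the quadratic and cubic terms $\beta_2 U_1^2$ and $\beta_3 U_1^3$ are controlled by convolution estimates.

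The proof then reduces to the radii polynomial method: using interval arithmetic around a numerical zero $\bar{x}$, I would compute rigorous bounds $Y_0 \geq \|T(\bar{x})-\bar{x}\|_X$, $Z_0 \geq \|I - AA^\dagger\|_{B(X)}$, $Z_1 \geq \|A(DT(\bar{x})-A^\dagger)\|_{B(X)}$, and $Z_2(r)$ dominating $\|A(DT(x)-DT(\bar{x}))\|_{B(X)}$ uniformly on the ball of radius $r$ around $\bar{x}$, for the Newton-like operator $T(x)=x - AF(x)$ with $A$ an approximate inverse of $DF(\bar{x})$. Existence of a true zero within $X$-distance $r^\ast$ of $\bar{x}$ then follows once the radii polynomial $Y_0 + (Z_0+Z_1-1)r + Z_2(r)\,r^2$ admits a root $r^\ast \leq 2.4\times 10^{-5}$, and the claimed $C^0$ bound follows from the continuous embedding of the weighted $\ell^1$ coefficient space into $C^0$. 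The main obstacle I anticipate is the balance between $l_2$ and $K_C$: the imaginary part $\sqrt{3/5}$ of the linearization produces slow oscillatory exponential decay, so $l_2$ must be pushed large enough that $\bar{w}(l_2)$ lies comfortably inside the verifiable domain of the center-stable graph furnished by Section~\ref{sec:graph_enclosure}, while $K_C$ must simultaneously be chosen so that the Chebyshev tail and the Lyapunov-Perron generalized Lipschitz contributions to $Z_1$ stay safely below $1$; this balancing is genuinely non-perturbative because the prescribed $|\beta_1|=3/5$ is well outside the small-amplitude regime analysed in~\cite{MR2475557}.
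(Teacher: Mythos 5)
Your proposal follows essentially the same route as the paper: the second-order reformulation with $q=2$, $d=2$, the correct verification that $D\mathbf{N}(0,0)$ has eigenvalues $1\pm i\sqrt{3/5}\notin[0,\infty)$, the Taylor/Chebyshev/center-stable-manifold decomposition with $C^1$ matching and the graph boundary condition, and the radii-polynomial contraction in weighted $\ell^1$ Banach algebras, which is exactly Sections~\ref{sec:graph_enclosure}--\ref{sec:newton-kantorovich} instantiated as in Section~\ref{sec:2DSH}. The only small caveat is that the final $C^0$ bound on $(r_0,\infty)$ is not obtained from the $\ell^1\hookrightarrow C^0$ embedding alone but also uses the Lipschitz control $\mathcal{L}_y$ on the manifold graph as in Corollary~\ref{cor:C0bound}.
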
	
\end{example}

\begin{figure}
\centering
\includegraphics[width=0.5\textwidth]{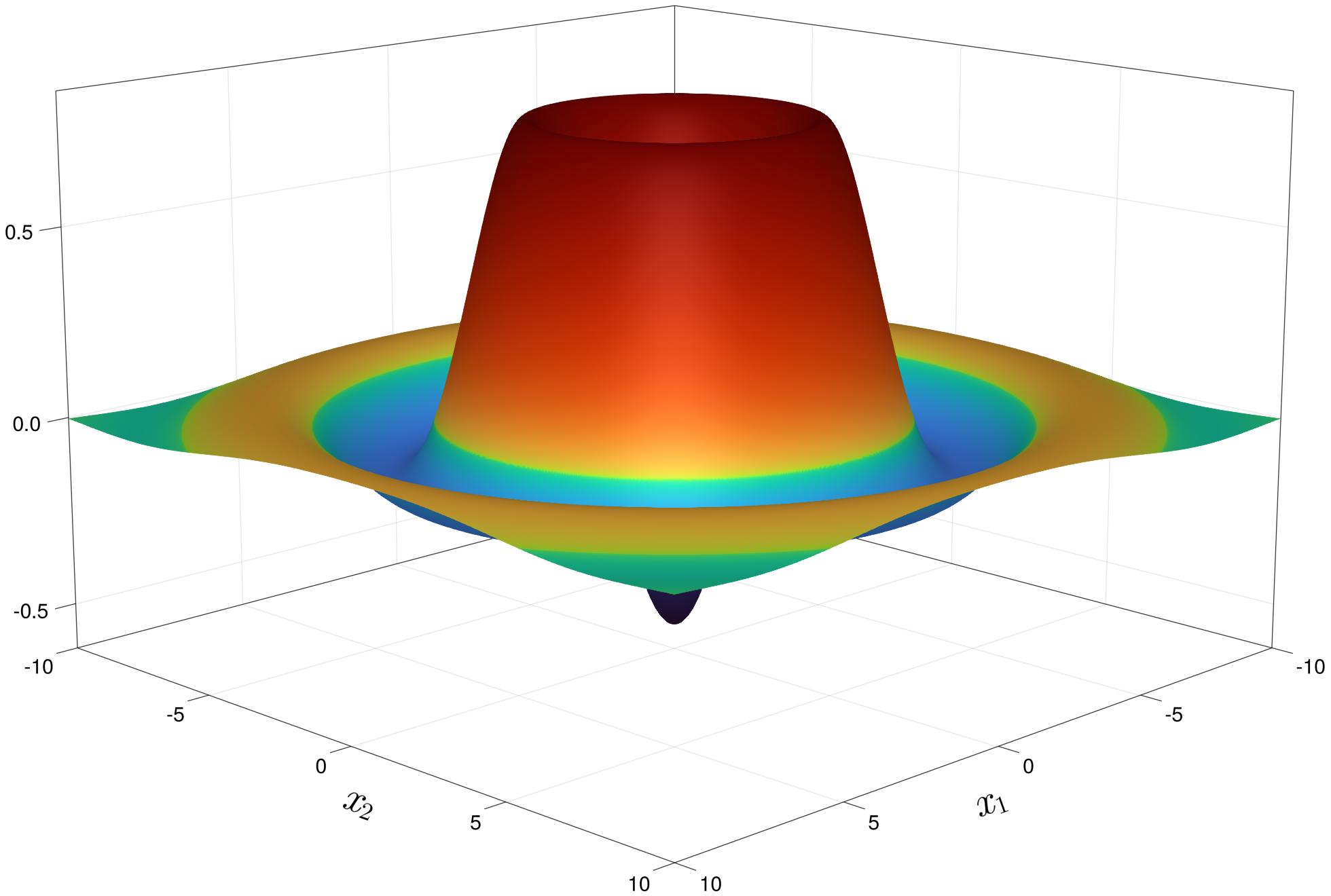}
\caption{Numerical stationary \emph{ring} of the Swift-Hohenberg equation \eqref{eq:exampleSH} on $\mathbb{R}^2$ at parameter values $(\beta_1, \beta_2, \beta_3, \beta_4) = (-\tfrac{3}{5},\sqrt{6},-\tfrac{1}{10},1)$. This approximation is proven to be $2.4 \times 10^{-5}$ close (in $C^0$-norm) to a true localized radial stationary solution.}
\label{fig:exampleSH}
\end{figure}


\begin{example}\label{ex:exampleFN}
Consider the three-component \emph{FitzHugh-Nagumo} type equation 
\begin{equation}\label{eq:exampleFN}
\begin{cases}
\hspace{0.2cm} (U_1)_t = \epsilon^2 \Delta U_1 + U_1 - U_1^3 - \epsilon (\beta_1 + \beta_2 U_2 + \beta_3 U_3), \\
\tau (U_2)_t = \Delta U_2 + U_1 - U_2, \\
\theta (U_3)_t = \beta_4^2 \Delta U_3 + U_1 - U_3,
\end{cases}\quad U_i = U_i(t, x) \text{ for } i=1,2,3, \quad t \ge 0, \quad x \in \R^2,
\end{equation}
with parameters, $\tau, \theta, \epsilon>0$, $\beta_1, \beta_2, \beta_3 \in \R$ and $\beta_4 > 0$.
Originally, the FitzHugh-Nagumo model was introduced in \cite{FitzHugh} as a simple excitable-oscillatory system to describe nerve impulses in an axon. For the three-component variant~\eqref{eq:exampleFN} ground states solve \eqref{eq:elliptic} with $q=3$, $d=2$ and $\mathbf{N}(U) = \bigl(\epsilon^{-2} (U_1 - U_1^3) - \epsilon^{-1} (\beta_1 + \beta_2 U_2 + \beta_3 U_3) , U_1 - U_2 , \beta_4^{-2}(U_1 - U_3) \bigr)$. For $0 < \epsilon \ll 1$ there exist stationary planar radial \emph{spots} of \eqref{eq:exampleFN}, see e.g.\ Theorem 1.3 in \cite{MR3156797}. This result 
requires $\epsilon$ to be asymptotically small.
Our method does not impose any condition on the smallness of $\epsilon$ which, to the best of our knowledge, constitutes a novel result.
We prove the following constructive existence result, see Section \ref{sec:2DFN}.
\begin{theorem}\label{thm:exampleFN}
For $\epsilon = \tfrac{3}{10}$ and $(\beta_1,\beta_2,\beta_3,\beta_4) = (\tfrac{1}{2},\tfrac{1}{2},1,3)$, there exists a localized radial stationary solution of the three-component FitzHugh-Nagumo type equation~\eqref{eq:exampleFN} on $\R^2$. This solution lies within distance $9.8 \times 10^{-7}$ (in the $C^0$-norm) of the numerical approximation depicted in Figure~\ref{fig:exampleFN}.
\end{theorem}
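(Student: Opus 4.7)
The plan is to apply the three-piece constructive strategy outlined in the introduction to the specific parameters $\epsilon = \tfrac{3}{10}$ and $(\beta_1,\beta_2,\beta_3,\beta_4) = (\tfrac{1}{2},\tfrac{1}{2},1,3)$. First I would identify the relevant constant equilibrium $c\in\R^3$ by solving $\mathbf{N}(c)=0$; since the latter two components of $\mathbf{N}$ force $c_1=c_2=c_3$, this reduces to a single cubic in one variable. Among the roots of this cubic I would select the one around which a localized radial spot is expected, and numerically verify the hyperbolicity hypothesis that $D\mathbf{N}(c)$ has no eigenvalues in $[0,\infty)$, so that the framework of Section~\ref{sec:graph_enclosure} applies and $\bc=(0,c,0)$ admits a $q+1=4$-dimensional center-stable manifold in the $7$-dimensional phase space of~\eqref{eq:autonomous_ode}.

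Next I would compute a high-accuracy numerical approximation of the radial profile by choosing matching radii $0<l_1<l_2$ together with finite truncation orders for the Taylor expansion on $[0,l_1]$ and the Chebyshev expansion on $[l_1,l_2]$, and by computing an approximate parametrization of the center-stable manifold at $\bc$. These pieces are then assembled into the projected BVP: Taylor-recurrence equations on $[0,l_1]$ with the constraint $u'(0)=0$, the Chebyshev coefficients and matching conditions at $r=l_1$, and a boundary condition at $r=l_2$ forcing $w(l_2)$ to lie on the graph of the Lyapunov--Perron center-stable manifold. I would then set up the radii polynomial approach in the appropriate weighted Banach space of coefficient sequences, compute an approximate inverse of the derivative of the zero-finding map at the numerical solution, and derive the $Y$, $Z_0$, $Z_1$ and $Z_2$ bounds required by the Newton--Kantorovich type theorem of Section~\ref{sec:newton-kantorovich}, incorporating the generalized Lipschitz bounds on the center-stable manifold produced by the Lyapunov--Perron operator.

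The main obstacle I anticipate is twofold. First, since $\epsilon=3/10$ is not small the nonlinearity carries factors $\epsilon^{-2}$ and $\epsilon^{-1}$ that amplify all defect bounds, so the Chebyshev truncation order and the interval $[l_1,l_2]$ must be chosen carefully to keep the $Z_1$ bound strictly below $1$. Second, the fact that $q=3$ and hence the center-stable manifold is only $4$-dimensional inside a $7$-dimensional flow makes the Lyapunov--Perron estimates delicate: one needs $l_2$ large enough that the local graph of Section~\ref{sec:graph_enclosure} is verifiable at $w(l_2)$, yet small enough that the Chebyshev representation on $[l_1,l_2]$ remains well-conditioned, and the spectral separation of $D\mathbf{N}(c)$ must be sharp enough for the generalized Lipschitz estimates to close. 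Once these parameters are tuned, verifying the hypotheses of the Newton--Kantorovich theorem with interval arithmetic yields the existence of a true solution within the claimed $C^0$-distance $9.8\times 10^{-7}$ of the numerical profile displayed in Figure~\ref{fig:exampleFN}.
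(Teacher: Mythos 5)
Your proposal follows essentially the same route as the paper: identify the equilibrium $c=(c_*,c_*,c_*)$ from the cubic, rigorously enclose the eigendata of $D\mathbf{N}(c)$ (the paper does this via a small auxiliary Newton--Kantorovich argument in Appendix~\ref{appendix:eigenproblem}), verify Proposition~\ref{prop:graph} for the Lyapunov--Perron graph, and check the hypotheses of Theorem~\ref{thm:radii_polynomial} with interval arithmetic to obtain the $C^0$ bound of Corollary~\ref{cor:C0bound}. The only notable difference in emphasis is the source of the computational difficulty: the paper attributes it mainly to the small spectral gap $\hat{\lambda}\approx 0.369$, which forces a large $L$ and hence a very large Chebyshev truncation order, rather than to the dimension count of the center-stable manifold.
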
	
\end{example}

\begin{figure}
\centering
\begin{subfigure}[b]{0.3\textwidth}
\includegraphics[width=\textwidth]{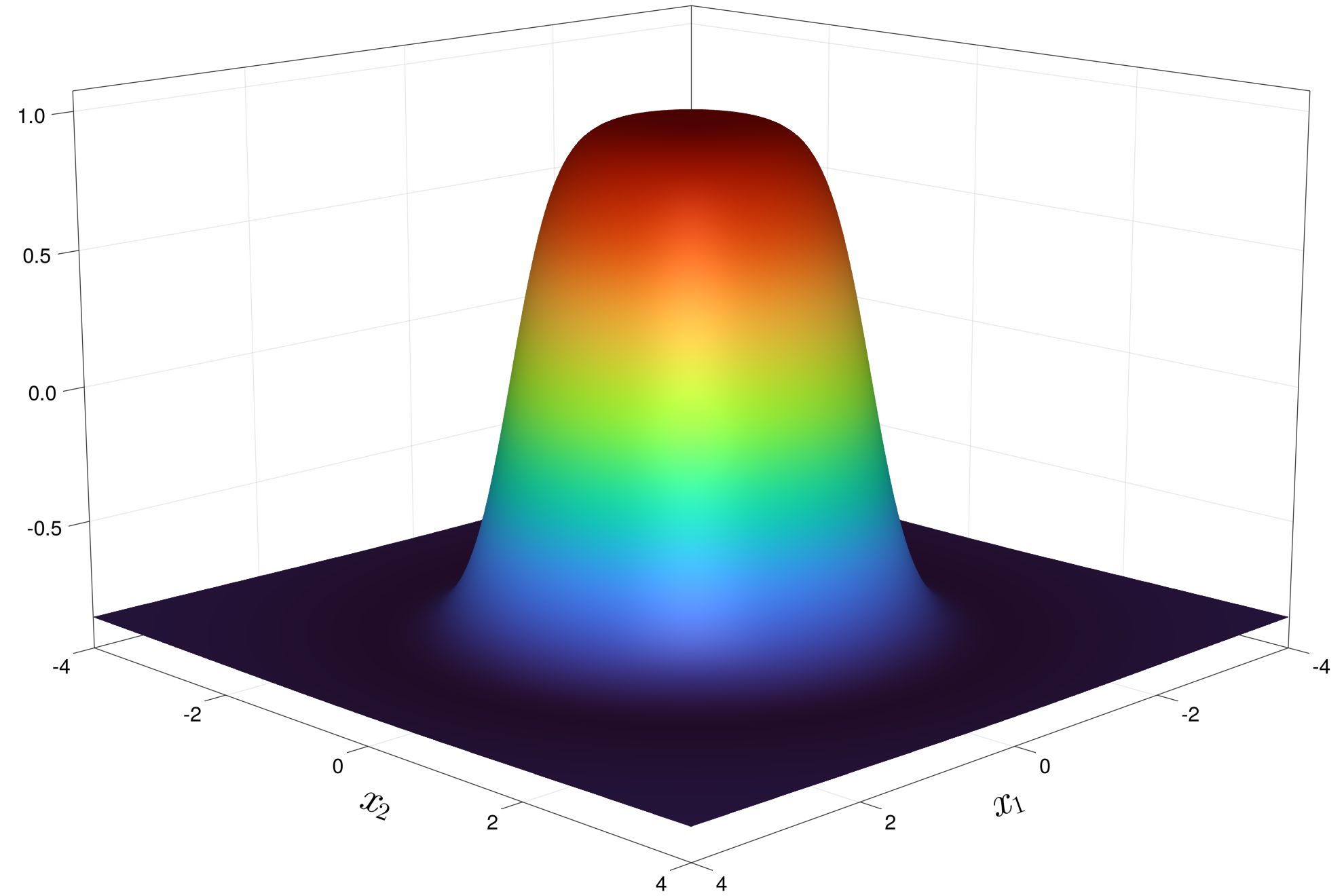}
\caption{}
\end{subfigure}
\hfill
\begin{subfigure}[b]{0.3\textwidth}
\includegraphics[width=\textwidth]{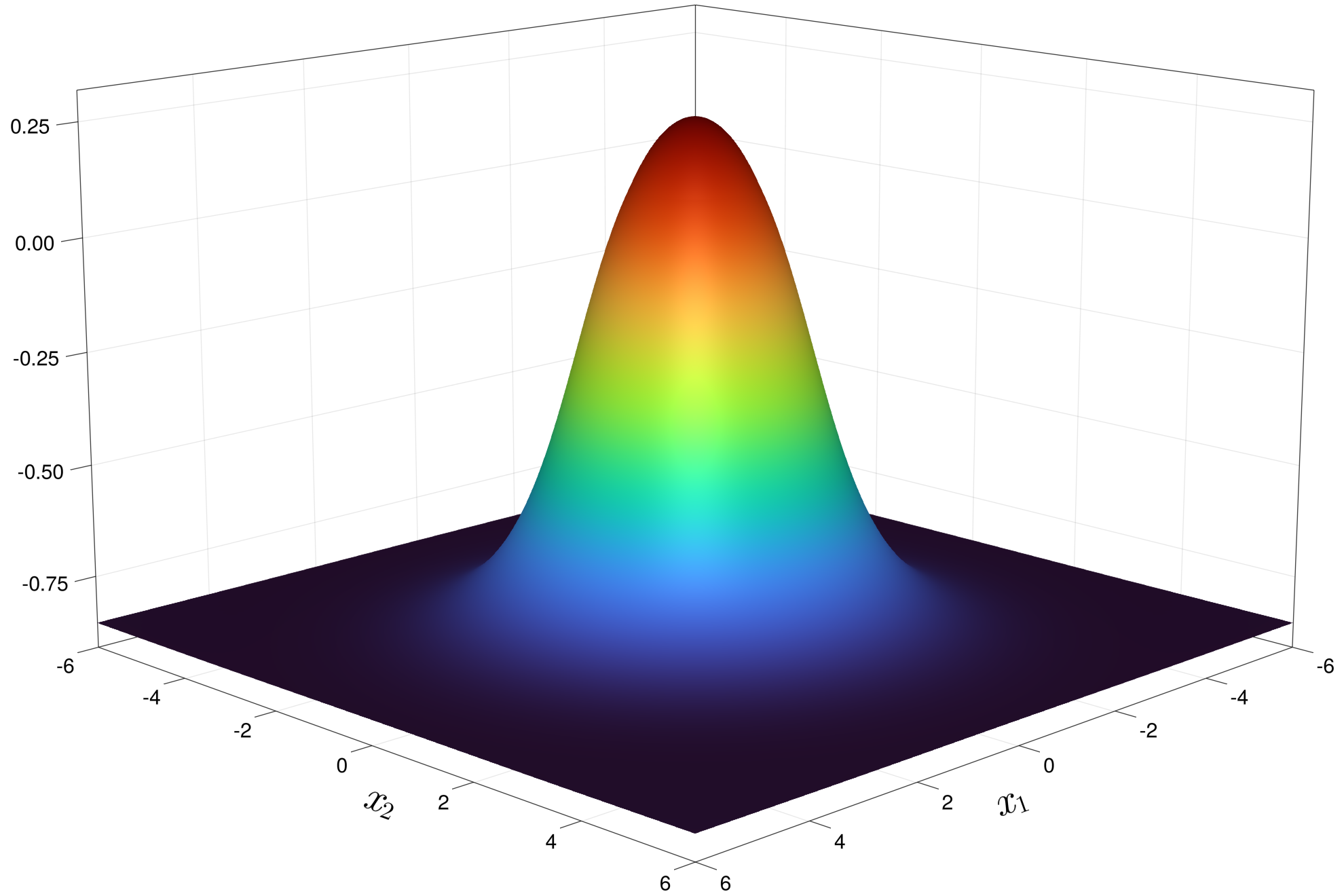}
\caption{}
\end{subfigure}
\hfill
\begin{subfigure}[b]{0.3\textwidth}
\includegraphics[width=\textwidth]{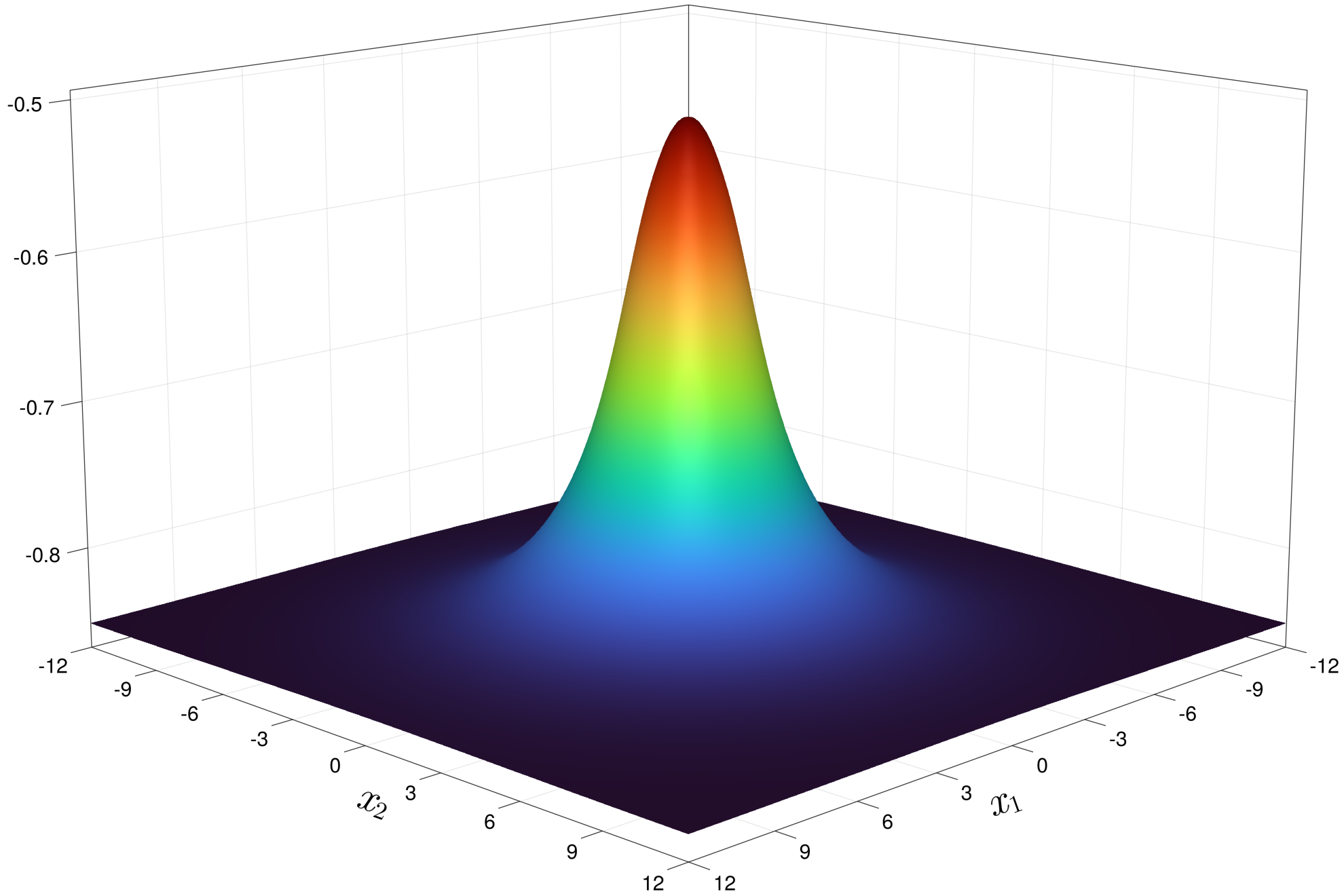}
\caption{}
\end{subfigure}
\caption{Numerical stationary planar radial \emph{spot} of the three-component FitzHugh-Nagumo type equation \eqref{eq:exampleFN} on $\mathbb{R}^2$ at parameter values $\epsilon = \tfrac{3}{10}$ and $(\beta_1,\beta_2,\beta_3,\beta_4) = (\tfrac{1}{2},\tfrac{1}{2},1,3)$; the three components are represented in the subfigures (a), (b) and (c), respectively. This approximation is proven to be $9.8 \times 10^{-7}$ close (in $C^0$-norm) to a true localized radial stationary solution.}
\label{fig:exampleFN}
\end{figure}


All computational aspects are implemented in
Julia (cf.~\cite{Julia}) via the package \emph{RadiiPolynomial.jl} (cf.~\cite{RadiiPolynomial.jl}) which relies on the package
\emph{IntervalArithmetic.jl} (cf.~\cite{IntervalArithmeticJulia}) for
rigorous floating-point computations. The code is available at \cite{Code}.

\begin{remark}[{\bf The case of non-polynomial nonlinearities}] \label{rem:nonlinearity}
If the nonlinearity $\mathbf{N}$ appearing in \eqref{eq:radial_elliptic} is
non-polynomial and involves elementary functions (namely, exponentials, logarithms, algebraic functions or compositions thereof), then a change of coordinates may be
introduced to transform \eqref{eq:radial_elliptic} into a higher dimensional system of polynomial ODEs (e.g. see \cite{MR633878,MR3545977,MR4292532}). The
approach proposed in the present paper may be readily adapted, although with
some additional work in extending the construction of the center-stable
manifold.
\end{remark}

The paper is organized as follows. In Section~\ref{sec:graph_enclosure} we obtain a Lipschitz error bound for the center-stable manifold via Lyapunov-Perron's method. In Section~\ref{sec:newton-kantorovich} we present the projected boundary value problem on $[0,l_2]$, which connects at $r=l_2$ to the center-stable manifold. This boundary value problem is then split into two subintervals, as explained above, and subsequently solved using a Newton-Kantorovich type theorem.
Section \ref{sec:applications} presents applications where we rigorously compute localized radial solutions of the cubic Klein-Gordon equation on $\mathbb{R}^3$, the Swift-Hohenberg equation on $\mathbb{R}^2$ and a three-components FitzHugh-Nagumo type equation on $\mathbb{R}^2$.


\section{Local enclosure of the center-stable manifold} \label{sec:graph_enclosure}

In this section, we employ the Lyapunov-Perron method to find a Lipschitz bound for the local graph of a center-stable manifold of the equilibrium $\bc = (0, c, 0)$ of \eqref{eq:autonomous_ode}.
We note that we could have opted to apply the results in Chapter 4.1 of \cite{Chicone}, which are valid for general local invariant manifolds, directly to our situation. However, for the center-stable manifold in our setting we can obtain improved bounds by incorporating the specific splitting of the center direction and the stable directions in the estimates. 

Linearizing~\eqref{eq:autonomous_ode} about $\bc$ yields
\[
Df(\bc) =
\begin{pmatrix}
0 & 0 & 0 \\
0 & 0 & I \\
0 & - D\mathbf{N}(c) & 0
\end{pmatrix}.
\]
Evidently, $0$ is an eigenvalue, which originates from the removal of the non-autonomous term $r^{-1}$ in \eqref{eq:radial_elliptic}. 
Hence, a localized solution to \eqref{eq:radial_elliptic} is, in the context of \eqref{eq:autonomous_ode}, a trajectory in a center-stable manifold of $\bc$.

The other eigenvalues of $Df(\bc)$ are the square roots of the eigenvalues of $-D\mathbf{N}(c)$. Since we assumed $D\mathbf{N}(c)$ has no eigenvalues in $[0,\infty)$, the Jacobian $Df(\bc)$ has no eigenvalues on the imaginary axis, apart from the simple $0$ eigenvalue already mentioned.
Furthermore, when $ 0 \neq \lambda \in \mathbb{C}$ is an eigenvalue, so is~$-\lambda$. To reduce technicalities and simplify notation, we continue the analysis under the assumption that all stable and unstable eigenvalues of $Df(\bc)$ are simple.
This is the generic case. We stress that the analysis below can be generalized in a relatively straightforward manner to the case of eigenvalues with higher multiplicity and associated generalized eigenvectors.

Let $\Lambda : \mathbb{C}^q \to \mathbb{C}^q$ denote the diagonal matrix of eigenvalues $\lambda_1,\dots,\lambda_q$ of $Df(\bc)$ with positive real part, and let $\Gamma : \mathbb{C}^q \to \mathbb{C}^q$ be the matrix satisfying $- D \mathbf{N}(c) \Gamma = \Gamma \Lambda^2$. It follows that
\[
Df(\bc)\begin{pmatrix}
0 \\ \Gamma \\ \hspace{0.25cm}\Gamma \Lambda
\end{pmatrix}
=
\begin{pmatrix}
0 \\ \Gamma \Lambda \\ \hspace{0.25cm}\Gamma \Lambda^2
\end{pmatrix}
=
\begin{pmatrix}
0 \\ \Gamma \\ \hspace{0.25cm}\Gamma \Lambda
\end{pmatrix} \Lambda.
\]
For the corresponding eigenvalues $-\Lambda$ with negative real part we have
\[
Df(\bc)\begin{pmatrix}
0 \\ \Gamma \\ -\Gamma \Lambda
\end{pmatrix}
=
\begin{pmatrix}
0 \\ -\Gamma \Lambda \\ \hspace{0.5cm}\Gamma \Lambda^2
\end{pmatrix}
=
\begin{pmatrix}
0 \\ \Gamma  \\ -\Gamma \Lambda
\end{pmatrix} (- \Lambda).
\]
Therefore, the matrix of eigenvectors of $Df(\bc)$ is given by
\[
M = \begin{pmatrix}
1 & 0 & 0 \\
0 & \Gamma & \Gamma \\
0 & - \Gamma \Lambda & \hspace{0.25cm}\Gamma \Lambda
\end{pmatrix} ,
\]
and its inverse is
\[
M^{-1} = \begin{pmatrix}
1 & 0 & 0 \\
0 & \tfrac{1}{2} \Gamma^{-1} & -\tfrac{1}{2} \Lambda^{-1} \Gamma^{-1} \\[1mm]
0 & \tfrac{1}{2} \Gamma^{-1} & \hspace{0.25cm}\tfrac{1}{2} \Lambda^{-1} \Gamma^{-1}
\end{pmatrix}.
\]
Since
\begin{align*}
& M^{-1}  f \left( \bc + M \begin{pmatrix}x\\ y\\ z\end{pmatrix}\right) \\
& \hspace*{2cm}= M^{-1} Df(\bc)  M \begin{pmatrix}x\\ y\\ z\end{pmatrix} + M^{-1} \left(f \left(\bc + M \begin{pmatrix}x\\ y\\ z\end{pmatrix}\right) - Df(\bc) M \begin{pmatrix}x\\ y\\ z\end{pmatrix} \right) \\
& \hspace*{2cm}=
\begin{pmatrix} 0 \\ -\Lambda y \\ \Lambda z \end{pmatrix} +
M^{-1}
\begin{pmatrix}
-x^2 \\
0 \\
- (d-1) x \Gamma \Lambda (-y + z) - \mathbf{N}(c+\Gamma (y + z)) + D \mathbf{N}(c)\Gamma( y+ z)
\end{pmatrix},
\end{align*}
the transformed ``normal'' form of \eqref{eq:autonomous_ode}, under $w = \bc + M \begin{pmatrix}x\\ y\\ z\end{pmatrix}$, reads
\begin{equation}\label{eq:diag_autonomous_ode}
\frac{d}{d r} \begin{pmatrix} x \\ y \\ z \end{pmatrix} =
\begin{pmatrix}
-x^2 \\
-\Lambda y + \psi(x, y, z) \\
\hspace{0.25cm}\Lambda z - \psi(x, y, z) \\
\end{pmatrix},
\end{equation}
where $x \in \R$ is the {\em center variable}, $y,z \in \mathbb{C}^q$, and
\begin{equation}
\psi(x, y, z) \bydef
\frac{1}{2}\Big(
(d-1) x (-y + z) +
\Lambda^{-1} \Gamma^{-1}\Big( \mathbf{N}(c+\Gamma (y +  z)) - D\mathbf{N}(c)\Gamma (y + z) \Big)
\Big) \in \mathbb{C}^q.
\end{equation}

The domain of the local graph of the desired center-stable manifold is a bounded connected subset of $[0, \infty) \times \mathbb{C}^q$ containing the stationary point $0$ of \eqref{eq:diag_autonomous_ode}. We note that the positive $x$-axis is invariant, corresponding to the equilibrium state $u=c$.
The function space used to parameterize the local center-stable manifold in the form $z = \alpha(x, y)$ is given by
\begin{align*}
\mathcal{G}_{\delta, \mu, \mathcal{L}_x, \mathcal{L}_y} \bydef \Big\{ \alpha : \{(x, y) \in [0, \infty) \times \mathbb{C}^q \, :\, \, x \in [0, \delta],\, |y|_{\mathbb{C}^q} \le &\mu\} \to \mathbb{C}^q \, : \\
&\alpha(x, 0) = 0, \\
&|\alpha(x_1, y) - \alpha(x_2, y)|_{\mathbb{C}^q} \le \mathcal{L}_x |y|_{\mathbb{C}^q} |x_1 - x_2|, \\
&|\alpha(x, y_1) - \alpha(x, y_2)|_{\mathbb{C}^q} \le \mathcal{L}_y |y_1 - y_2|_{\mathbb{C}^q} \hspace*{0.7cm} \smash{\Big\}}.
\end{align*}
The positive parameters $\delta$ and $\mu$ controlling the size of the chart, as well as the Lipschitz constants $\mathcal{L}_y$ and the ``mixed term'' Lipschitz constant
$\mathcal{L}_x$, will be chosen later, and the choice will depend on the application. 
Notably, $\mathcal{G}_{\delta, \mu, \mathcal{L}_x, \mathcal{L}_y}$ equipped with the norm
\[
|\alpha|_{\mathcal{G}_{\delta, \mu, \mathcal{L}_x, \mathcal{L}_y}} \bydef \sup_{\begin{smallmatrix}x \in [0, \delta],\\ 0 < |y|_{\mathbb{C}^q} \le \mu\end{smallmatrix}} \frac{|\alpha(x, y)|_{\mathbb{C}^q}}{|y|_{\mathbb{C}^q}} ,
\]
is a Banach space (cf. Chapter 4.1, Proposition A of \cite{Chicone}).

\begin{remark}
Throughout this paper, we always endow $\mathbb{C}^q$ with the infinity norm, which we keep denoting by $| y |_{\mathbb{C}^q} \bydef \max_{i=1,\dots,q} |y_i|$.
Also, we denote by $\mathscr{B}(X, Y)$ the set of bounded linear operators from $X$ to $Y$ and by $| \cdot |_{\mathscr{B}(X, Y)}$ the induced operator norm.
\end{remark}

In the following, we will need estimates on the linear flow. To this end, defining
\begin{equation}\label{eq:lambda_hat}
\hat{\lambda} \bydef \min_{i = 1, \dots, q} \Re(\lambda_i),
\end{equation}
we recall that
\[
|e^{- \Lambda r}|_{\mathscr{B}(\mathbb{C}^q,\mathbb{C}^q)} \le e^{- \hat{\lambda} r},
\qquad \text{for all } r \ge 0.
\]

Under a set of explicit constraints on $\delta, \mu, \mathcal{L}_x, \mathcal{L}_y$, the following proposition guarantees that $\mathcal{G}_{\delta, \mu, \mathcal{L}_x, \mathcal{L}_y}$ contains the local graph of the center-stable manifold of the stationary point $0$ of \eqref{eq:diag_autonomous_ode}. The trajectories in this forward invariant manifold converge to $0$ as $r$ tends to $\infty$.

\begin{proposition}\label{prop:graph}
Let $\delta, \mu, \mathcal{L}_x, \mathcal{L}_y > 0$. Let $\hat{\psi}=\hat{\psi}(\mu,\mathcal{L}_y)>0$ be a bound satisfying
\begin{equation}\label{eq:psi_hat}
\hat{\psi} \ge \frac{1}{2} \sup_{|y|_{\mathbb{C}^q} \le \mu, \, |z|_{\mathbb{C}^q} \le \mathcal{L}_y \mu} \left|
\Lambda^{-1} \Gamma^{-1}\Bigl( D\mathbf{N}(c+\Gamma (y +  z)) - D\mathbf{N}(c)\Bigr)\Gamma
\right|_{\mathscr{B}(\mathbb{C}^q,\mathbb{C}^q)}.
\end{equation}
Assume that the following three inequalities hold:
\begin{subequations}
\label{eq:threeconstraints}
\begin{align}
\label{eq:graph_constraint1}
\hat{\lambda}  &>   (\tfrac{d-1}{2}\delta + \hat{\psi})(1+\mathcal{L}_y),
\\
\label{eq:graph_constraint2}
\mathcal{L}_x &\ge \left(
\frac{1}{2\hat{\lambda} -  (\tfrac{d-1}{2}\delta + \hat{\psi})(1 + \mathcal{L}_y)} + \frac{\left( \tfrac{d-1}{2}\delta + \hat{\psi} \right) (1 + \mathcal{L}_y)}{(2\hat{\lambda} - (\tfrac{3(d-1)}{2} \delta + 2\hat{\psi})(1 + \mathcal{L}_y)) (2\hat{\lambda} - (\tfrac{d-1}{2} \delta + \hat{\psi})(1 + \mathcal{L}_y))}
\right) \nonumber \\
&\hspace*{1cm}\times
\left(\tfrac{d-1}{2} (1 + \mathcal{L}_y) + \left( \tfrac{d-1}{2} \delta + \hat{\psi} \right)\mathcal{L}_x \right),
\\
\label{eq:graph_constraint3}
\mathcal{L}_y &\ge \frac{ (\tfrac{d-1}{2} \delta + \hat{\psi}) (1 + \mathcal{L}_y)}{2\hat{\lambda} - (\tfrac{d-1}{2} \delta + \hat{\psi}) (1 + \mathcal{L}_y)} .
\end{align}
\end{subequations}
Then, $\mathcal{G}_{\delta, \mu, \mathcal{L}_x, \mathcal{L}_y}$ contains the local graph $\tilde{\alpha}$ of a center-stable manifold of the stationary point $0$ of \eqref{eq:diag_autonomous_ode}. Additionally, for any initial condition $(x(0), y(0), z(0)) = (\xi, \eta, \tilde{\alpha}(\xi, \eta))$ such that $\xi \in [0, \delta]$ and $|\eta|_{\mathbb{C}^q} \le \mu$, there exists a unique solution $(x(r), y(r), z(r))$ of \eqref{eq:diag_autonomous_ode} for all $r \ge 0$ which satisfies $z(r) = \tilde{\alpha}(x(r), y(r))$, for all $r \ge 0$, and $\lim_{r \to \infty} (x(r), y(r), z(r)) = 0$.
\end{proposition}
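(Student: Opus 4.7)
The plan is to realize the graph $\tilde\alpha$ as the unique fixed point of a Lyapunov-Perron operator $T$ acting on $\mathcal{G}_{\delta,\mu,\mathcal{L}_x,\mathcal{L}_y}$. Given $\alpha$ in this space and initial data $(\xi,\eta)$ with $\xi \in [0,\delta]$, $|\eta|_{\mathbb{C}^q} \le \mu$, I would first integrate the scalar $x$-equation $\dot x = -x^2$ explicitly to $x(r) = \xi/(1+\xi r) \in [0,\delta]$ and then solve the slaved $y$-equation $\dot y = -\Lambda y + \psi(x,y,\alpha(x,y))$. Because $\mathbf{N}(c)=0$ and $\alpha(\cdot,0)=0$ yield $\psi(x,0,0)=0$, the identity $y \equiv 0$ is the solution when $\eta = 0$; for general $\eta$, variation of constants together with an exponential Gronwall inequality produces a unique global solution satisfying $|y(r)|_{\mathbb{C}^q} \le e^{-(\hat\lambda - K)r}|\eta|_{\mathbb{C}^q}$, where $K \bydef (\tfrac{d-1}{2}\delta + \hat\psi)(1+\mathcal{L}_y)$ is the worst-case Lipschitz constant of $y \mapsto \psi(x,y,\alpha(x,y))$ on the chart. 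Constraint~\eqref{eq:graph_constraint1} is exactly the condition $\hat\lambda - K > 0$ that guarantees $y(r)$ stays in the chart and decays exponentially. The operator is then
\[
T[\alpha](\xi,\eta) \bydef \int_0^\infty e^{-\Lambda s}\,\psi(x(s),y(s),\alpha(x(s),y(s)))\,ds,
\]
motivated by integrating the $z$-equation $\dot z = \Lambda z - \psi$ forward on $[r,\infty)$ under the asymptotic requirement $z(r) \to 0$; the fixed-point equation $\tilde\alpha = T[\tilde\alpha]$ encodes precisely the center-stable invariance.

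The second step is to verify that $T$ preserves the three defining properties of $\mathcal{G}_{\delta,\mu,\mathcal{L}_x,\mathcal{L}_y}$. The vanishing $T[\alpha](x,0)=0$ is immediate. For the Lipschitz-in-$y$ bound, I would compare trajectories $(x,y_1),(x,y_2)$ with the same $\xi$ and different $\eta_i$; the pointwise bound $|\psi(x,y_1,\alpha(x,y_1)) - \psi(x,y_2,\alpha(x,y_2))|_{\mathbb{C}^q} \le K|y_1 - y_2|_{\mathbb{C}^q}$ combined with exponential Gronwall gives $|y_1(s)-y_2(s)|_{\mathbb{C}^q} \le e^{-(\hat\lambda - K)s}|\eta_1-\eta_2|_{\mathbb{C}^q}$. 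Substituting into the Lyapunov-Perron integral yields $|T[\alpha](\xi,\eta_1) - T[\alpha](\xi,\eta_2)|_{\mathbb{C}^q} \le \frac{K}{2\hat\lambda - K}|\eta_1-\eta_2|_{\mathbb{C}^q}$; the self-consistent requirement $\mathcal{L}_y \ge K/(2\hat\lambda - K)$ is exactly constraint~\eqref{eq:graph_constraint3}.

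The mixed Lipschitz-in-$x$ bound is the delicate part and the one I expect to be the main obstacle. For trajectories with $x_i(0)=\xi_i$ and common $\eta$, one has $|x_1(s)-x_2(s)| \le |\xi_1-\xi_2|$ from the explicit formula for $x$, while both $|y_i(s)|_{\mathbb{C}^q}$ and $|\alpha(x_i(s),y_i(s))|_{\mathbb{C}^q}$ are controlled by $(1+\mathcal{L}_y)e^{-(\hat\lambda - K)s}|\eta|_{\mathbb{C}^q}$. I would split $\psi_1 - \psi_2$ into (i) a piece proportional to $|x_1(s)-x_2(s)|$, which picks up a factor $|\eta|_{\mathbb{C}^q}$ through the bound $|\partial_x \psi| \le \tfrac{d-1}{2}(|y|_{\mathbb{C}^q} + |z|_{\mathbb{C}^q})$ combined with the mixed-Lipschitz constant $\mathcal{L}_x$ of $\alpha$, and (ii) a piece proportional to $|y_1(s) - y_2(s)|_{\mathbb{C}^q}$ handled as in the previous step. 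A nested Gronwall argument then bounds $|y_1(s)-y_2(s)|_{\mathbb{C}^q}$ by $|\eta|_{\mathbb{C}^q}|\xi_1-\xi_2|$ times an exponential whose effective rate involves the larger constant $(\tfrac{3(d-1)}{2}\delta + 2\hat\psi)(1+\mathcal{L}_y)$ appearing in~\eqref{eq:graph_constraint2}, because the inhomogeneous forcing itself already carries an $e^{Ks}$ factor relative to the decay $e^{-\hat\lambda r}$. Integrating against $e^{-\Lambda s}$ and tallying all contributions reproduces exactly the right-hand side of~\eqref{eq:graph_constraint2}, which is the sharpest choice of $\mathcal{L}_x$ preserved by $T$.

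To conclude, $T$ is a contraction on $\mathcal{G}_{\delta,\mu,\mathcal{L}_x,\mathcal{L}_y}$: comparing $T[\alpha_1]$ and $T[\alpha_2]$ at fixed $(\xi,\eta)$ introduces only an additional forcing of the form $(\tfrac{d-1}{2}\delta + \hat\psi)|\alpha_1 - \alpha_2|_{\mathcal{G}_{\delta,\mu,\mathcal{L}_x,\mathcal{L}_y}}|y_1(s)|_{\mathbb{C}^q}$, and the same Gronwall-and-integrate mechanism yields a contraction factor strictly below $1$ under the strict inequality~\eqref{eq:graph_constraint1}. Banach's fixed-point theorem then produces the unique $\tilde\alpha \in \mathcal{G}_{\delta,\mu,\mathcal{L}_x,\mathcal{L}_y}$. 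Forward invariance of the graph under the flow is automatic from the translation invariance of the integral defining $T$: setting $\zeta(r) \bydef \tilde\alpha(x(r),y(r))$ and using $\tilde\alpha = T[\tilde\alpha]$ one obtains $\zeta(r) = \int_r^\infty e^{\Lambda(r-s)}\psi(\ldots)\,ds$, so $(x(r),y(r),\zeta(r))$ solves~\eqref{eq:diag_autonomous_ode}. The asymptotic conclusion then follows from $x(r) \to 0$ (explicit), $|y(r)|_{\mathbb{C}^q} \to 0$ (exponential), and $z(r) = \tilde\alpha(x(r),y(r)) \to \tilde\alpha(0,0) = 0$.
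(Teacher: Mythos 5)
Your proposal follows essentially the same route as the paper: the same Lyapunov--Perron operator on $\mathcal{G}_{\delta,\mu,\mathcal{L}_x,\mathcal{L}_y}$, the explicit $x$-flow with the bounds $x(r;\xi)\le\xi$ and $|x(r;\xi_1)-x(r;\xi_2)|\le|\xi_1-\xi_2|$, Gr\"onwall estimates for the slaved $y$-equation, verification of the three defining properties of the function class against the three constraints, and the same invariance argument via the fixed-point identity. One correction is needed in the last step: the contraction factor is \emph{not} controlled by \eqref{eq:graph_constraint1} alone. Writing $K=(\tfrac{d-1}{2}\delta+\hat{\psi})(1+\mathcal{L}_y)$ and $Q=(\tfrac{3(d-1)}{2}\delta+2\hat{\psi})(1+\mathcal{L}_y)$, the comparison of $T[\alpha_1]$ and $T[\alpha_2]$ yields the factor $\bigl(\tfrac{d-1}{2}\delta+\hat{\psi}\bigr)S$ with $S=\tfrac{1}{2\hat{\lambda}-K}+\tfrac{K}{(2\hat{\lambda}-Q)(2\hat{\lambda}-K)}$; constraint \eqref{eq:graph_constraint1} only gives $2\hat{\lambda}-K>\hat{\lambda}>0$ and does not prevent this factor from exceeding $1$ (for instance when $2\hat{\lambda}-Q$ is small the second term blows up). It is \eqref{eq:graph_constraint2}, rearranged as $\bigl(\tfrac{d-1}{2}\delta+\hat{\psi}\bigr)S\le 1-S\,\tfrac{d-1}{2}(1+\mathcal{L}_y)/\mathcal{L}_x<1$, that delivers the contraction, exactly as in the paper's proof. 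With that attribution fixed, the argument is complete.
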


\begin{proof}
We follow the proof strategy of~\cite[\S 4.1]{Chicone} adapted to our context.
Given a function $\alpha \in \mathcal{G}_{\delta, \mu, \mathcal{L}_x, \mathcal{L}_y}$, we denote by $(x(r; \xi), y_\alpha(r; \xi, \eta))$ the unique solution of the initial value problem
\[
\left\{
\begin{aligned}
&\frac{dx}{d r}  = -x^2, \\
&\frac{dy}{d r}  = -\Lambda y + \psi (x, y, \alpha(x, y)), \\
&(x(0), y(0)) = (\xi, \eta),
\end{aligned}
\right.
\]
where $\xi \in [0, \delta]$ and $|\eta|_{\mathbb{C}^q} \le \mu$.

The proof essentially consists in showing that the Lyapunov-Perron operator, defined formally by
\begin{equation}\label{e:defLP}
[\LP(\alpha)](\xi, \eta) \bydef \int_0^{\infty} e^{- \Lambda r} \psi \Bigl(x(r; \xi), y_\alpha(r; \xi, \eta), \alpha \bigl(x(r; \xi), y_\alpha(r; \xi, \eta)\bigr)\Bigr) \, dr,
\end{equation}
satisfies the assumptions of the Banach Fixed-Point Theorem on $\mathcal{G}_{\delta, \mu, \mathcal{L}_x, \mathcal{L}_y}$.

On the one hand, the flow of $x$ is known exactly to be (with initial condition $x(0)=\xi \ge 0$)
\begin{equation}\label{e:xexplicit}
x(r; \xi) = \frac{\xi}{1 + \xi r} \ge 0.
\end{equation}
In principle, one could carry on this expression throughout the estimates. However, the resulting expressions involve exponential integrals and are laborious to work with in practice. Henceforth, we resort to the bounds
\[
x(r; \xi) \le \xi \qquad \text{and} \qquad
|x(r; \xi_1) - x(r; \xi_2)| \le |\xi_1 - \xi_2|, \qquad \text{for all } \xi, \xi_1, \xi_2 \in [0, \delta].
\]
On the other hand, the flow of the $y$-variable (with initial condition $x(0)=\xi$ and $y(0)=\eta$) is only accessible through an implicit variation of constant formula
\begin{equation}\label{eq:variation_of_constant}
y_\alpha (r; \xi, \eta) =
e^{-\Lambda r} \eta + \int_0^r e^{-\Lambda(r-s)} \psi \Bigl( x(s; \xi), y_\alpha (s; \xi, \eta), \alpha \bigl(x(s; \xi), y_\alpha (s; \xi, \eta)\bigr) \Bigr) \, ds.
\end{equation}

We will now derive a series of estimates which will help to establish that $\LP$ is a contraction on $\mathcal{G}_{\delta, \mu, \mathcal{L}_x, \mathcal{L}_y}$.
We start with an estimate on the change in $\psi$ when taking different initial conditions $(\xi_1,\eta_1)$ and $(\xi_2,\eta_2)$ for the same (but arbitrary) $\alpha \in \mathcal{G}_{\delta, \mu, \mathcal{L}_x, \mathcal{L}_y}$.
Indeed, an application of the Mean Value Theorem yields, after some bookkeeping and by using also the triangle inequality, that, as long as $|y_\alpha(r,\xi_i,\eta_i)|_{\mathbb{C}^q} \leq \mu$ for $i=1,2$, we have
\begin{align}
&\Bigl|\psi\Bigl(x(r; \xi_1), y_\alpha(r; \xi_1, \eta_1), \alpha\bigl(x(r; \xi_1), y_\alpha(r; \xi_1, \eta_1)\bigr)\Bigr) - \psi\Bigl(x(r; \xi_2), y_\alpha(r; \xi_2, \eta_2), \alpha\bigl(x(r; \xi_2), y_\alpha(r; \xi_2, \eta_2)\bigr)\Bigr) \Bigr|_{\mathbb{C}^q} \nonumber \\
&\hspace*{0.5cm}\le \tfrac{d-1}{2} (1 + \mathcal{L}_y) 
|y_\alpha(r; \xi_2, \eta_2)|_{\mathbb{C}^q}
 |x(r; \xi_1) - x(r; \xi_2)|  \nonumber \\
&\hspace*{1cm} + \left(\tfrac{d-1}{2} \delta + \hat{\psi}\right) |y_\alpha(r; \xi_1, \eta_1) - y_\alpha(r; \xi_2, \eta_2)|_{\mathbb{C}^q} \nonumber \\
&\hspace*{1.5cm} + \left(\tfrac{d-1}{2} \delta + \hat{\psi}\right) |\alpha(x(r; \xi_1), y_\alpha(r; \xi_1, \eta_1)) - \alpha(x(r; \xi_2), y_\alpha(r; \xi_2, \eta_2))|_{\mathbb{C}^q} \nonumber \\
&\hspace*{0.5cm}\le \tfrac{d-1}{2} (1 + \mathcal{L}_y) 
|y_\alpha(r; \xi_2, \eta_2)|_{\mathbb{C}^q}
|x(r; \xi_1) - x(r; \xi_2)| \nonumber \\
&\hspace*{1cm} + \left(\tfrac{d-1}{2} \delta + \hat{\psi}\right) |y_\alpha(r; \xi_1, \eta_1) - y_\alpha(r; \xi_2, \eta_2)|_{\mathbb{C}^q} \nonumber \\
&\hspace*{1.5cm} + \left(\tfrac{d-1}{2} \delta + \hat{\psi}\right) \left( \mathcal{L}_x |y_\alpha(r; \xi_2, \eta_2)|_{\mathbb{C}^q} |x(r; \xi_1) - x(r; \xi_2)| + \mathcal{L}_y |y_\alpha(r; \xi_1, \eta_1) - y_\alpha(r; \xi_2, \eta_2)|_{\mathbb{C}^q} \right) \nonumber \\
&\hspace*{0.5cm}\le \left(\tfrac{d-1}{2} (1 + \mathcal{L}_y) + \left( \tfrac{d-1}{2} \delta + \hat{\psi} \right) \mathcal{L}_x \right) 
|y_\alpha(r; \xi_2, \eta_2)|_{\mathbb{C}^q}
|\xi_1 - \xi_2| \nonumber \\
&\hspace*{1cm} + \left(\tfrac{d-1}{2} \delta + \hat{\psi}\right) (1 + \mathcal{L}_y) \, |y_\alpha(r; \xi_1, \eta_1) - y_\alpha(r; \xi_2, \eta_2)|_{\mathbb{C}^q} .
\label{e:yxieta1xieta2}
\end{align}

Similarly, we estimate the change in $\psi$ when taking different  $\alpha_1,\alpha_2 \in \mathcal{G}_{\delta, \mu, \mathcal{L}_x, \mathcal{L}_y} $ with the same initial condition $(\xi,\eta)$. Again, an application of the Mean Value Theorem implies that, as long as $|y_{\alpha_i}(r,\xi,\eta)|_{\mathbb{C}^q} \leq \mu$ for $i=1,2$, we have 
\begin{align}
&\Bigl|\psi\Bigl(\xi, y_{\alpha_1}(r; \xi, \eta), \alpha_1\bigl(\xi, y_{\alpha_1}(r; \xi, \eta)\bigr)\Bigr) - \psi\Bigl(\xi, y_{\alpha_2}(r; \xi, \eta), \alpha_2\bigl(\xi, y_{\alpha_2}(r; \xi, \eta)\bigr)\Bigr)\Bigr|_{\mathbb{C}^q} \nonumber \\
&\hspace*{0.5cm}\le
\left( \tfrac{d-1}{2} \delta + \hat{\psi} \right) |y_{\alpha_1}(r; \xi, \eta) - y_{\alpha_2}(r; \xi, \eta)|_{\mathbb{C}^q}  \nonumber \\
&\hspace*{1cm} + \left( \tfrac{d-1}{2} \delta + \hat{\psi} \right) |\alpha_1(\xi, y_{\alpha_1}(r; \xi, \eta)) - \alpha_2(\xi, y_{\alpha_2}(r; \xi, \eta))|_{\mathbb{C}^q} \nonumber \\
&\hspace*{0.5cm}\le
\left( \tfrac{d-1}{2} \delta + \hat{\psi} \right) |y_{\alpha_1}(r; \xi, \eta) - y_{\alpha_2}(r; \xi, \eta)|_{\mathbb{C}^q}  \nonumber \\
&\hspace*{1cm} + \left( \tfrac{d-1}{2} \delta + \hat{\psi} \right) \left( \mathcal{L}_y |y_{\alpha_1}(r; \xi, \eta) - y_{\alpha_2}(r; \xi, \eta)|_{\mathbb{C}^q} + |\alpha_1 - \alpha_2|_{\mathcal{G}_{\delta, \mu, \mathcal{L}_x, \mathcal{L}_y}} |y_{\alpha_2}(r; \xi, \eta)|_{\mathbb{C}^q}\right) \nonumber \\
&\hspace*{0.5cm}=
\left( \tfrac{d-1}{2} \delta + \hat{\psi} \right) \left( (1 + \mathcal{L}_y) |y_{\alpha_1}(r; \xi, \eta) - y_{\alpha_2}(r; \xi, \eta)|_{\mathbb{C}^q} + |\alpha_1 - \alpha_2|_{\mathcal{G}_{\delta, \mu, \mathcal{L}_x, \mathcal{L}_y}} |y_{\alpha_2}(r; \xi, \eta)|_{\mathbb{C}^q} \right).
\label{e:yalpha1alpha2}
\end{align}

Then, from the variation of constant formula \eqref{eq:variation_of_constant}, we have
\begin{align}
e^{\hat{\lambda} r} |y_\alpha (r; \xi, \eta)|_{\mathbb{C}^q}
&\le |\eta|_{\mathbb{C}^q} + \int_0^r e^{\hat{\lambda}s} \Bigl|\psi \Bigl( x(s; \xi), y_\alpha (s; \xi, \eta), \alpha \bigl(x(s; \xi), y_\alpha (s; \xi, \eta)\bigr) \Bigr)\Bigr|_{\mathbb{C}^q} \, ds 
\label{eq:rearranged} \\
&\le |\eta|_{\mathbb{C}^q} + \left(\frac{d-1}{2}\delta + \hat{\psi}\right)(1 + \mathcal{L}_y) \int_0^r e^{\hat{\lambda}s} |y_\alpha (r; \xi, \eta)|_{\mathbb{C}^q} \, ds, \nonumber
\end{align}
where the last inequality follows from \eqref{e:yxieta1xieta2} with $\xi_1 = \xi_2 = \xi$, $\eta_1 = \eta$, $\eta_2 = 0$, so that $y_\alpha(r; \xi_2, \eta_2) = 0$ since $\alpha \in\mathcal{G}_{\delta, \mu, \mathcal{L}_x, \mathcal{L}_y}$ and $\psi (x, 0, 0) = 0$ for any $x \in [0,\delta]$.
By Grönwall's inequality, we obtain
\begin{equation}\label{eq:bound_y}
|y_\alpha (r; \xi, \eta)|_{\mathbb{C}^q} \le |\eta|_{\mathbb{C}^q} e^{\left((\frac{d-1}{2}\delta + \hat{\psi})(1 + \mathcal{L}_y) - \hat{\lambda} \right) r}.
\end{equation}
Therefore,
\begin{equation}\label{eq:bound_integral_1}
\int_0^{\infty} e^{-\hat{\lambda} r} |y_\alpha(r; \xi, \eta)|_{\mathbb{C}^q} \, dr
\le \frac{1}{2\hat{\lambda} - (\tfrac{d-1}{2} \delta + \hat{\psi})(1 + \mathcal{L}_y)} |\eta|_{\mathbb{C}^q}.
\end{equation}
Due to the constraint~\eqref{eq:graph_constraint1}, 
we conclude from~\eqref{eq:bound_y} that $y_\alpha$ decays to $0$ at exponential rate, and $|y_\alpha (r; \xi, \eta)|_{\mathbb{C}^q} \leq |\eta|_{\mathbb{C}^q} \leq \mu$ for all $r \geq 0$. We infer that the estimates~\eqref{e:yxieta1xieta2} and~\eqref{e:yalpha1alpha2} hold for all $r \geq 0$. It then follows that $\psi\Bigl(\xi, y_{\alpha}(r; \xi, \eta), \alpha\bigl(\xi, y_{\alpha}(r; \xi, \eta)\bigr)\Bigr)$ tends to $0$ as $r \to \infty$, hence the Lyapunov-Perron operator $\LP$ in~\eqref{e:defLP} is well-defined.

Next, we estimate the difference between two solutions of
$\frac{dy}{d r}  = -\Lambda y + \psi (x, y, \alpha(x, y))$ with different initial conditions. We first take initial data with different values for $\xi$ only.
From the variation of constant formula \eqref{eq:variation_of_constant} we have 
\begin{align*}
&e^{\hat{\lambda} r}|y_\alpha(r; \xi_1, \eta) - y_\alpha(r; \xi_2, \eta)|_{\mathbb{C}^q}\\
&\hspace*{1cm}\le \int_0^r e^{\hat{\lambda} s} \Bigl[
\left(\tfrac{d-1}{2} (1 + \mathcal{L}_y) + \left( \tfrac{d-1}{2} \delta + \hat{\psi} \right) \mathcal{L}_x \right) 
|y_\alpha(r; \xi_2, \eta)|_{\mathbb{C}^q}
 |\xi_1 - \xi_2| \\
&\hspace*{2cm} + \left(\tfrac{d-1}{2} \delta + \hat{\psi}\right) (1 + \mathcal{L}_y) \, |y_\alpha(r; \xi_1, \eta) - y_\alpha(r; \xi_2, \eta)|_{\mathbb{C}^q} 
\Bigr] \, ds\\
&\hspace*{1cm}\le 
\left(\tfrac{d-1}{2} (1 + \mathcal{L}_y) + \left( \tfrac{d-1}{2} \delta + \hat{\psi} \right) \mathcal{L}_x \right) |\eta|_{\mathbb{C}^q} |\xi_1 - \xi_2| \frac{e^{(\frac{d-1}{2}\delta + \hat{\psi})(1 + \mathcal{L}_y) r} - 1}{(\frac{d-1}{2}\delta + \hat{\psi})(1 + \mathcal{L}_y)} \\
&\hspace*{2cm} + \left(\tfrac{d-1}{2} \delta + \hat{\psi}\right) (1 + \mathcal{L}_y) \int_0^r e^{\hat{\lambda} s} |y_\alpha(r; \xi_1, \eta) - y_\alpha(r; \xi_2, \eta)|_{\mathbb{C}^q} \, ds,
\end{align*}
where the first inequality follows from \eqref{e:yxieta1xieta2} with $\eta_1 = \eta_2 = \eta$ and the second inequality is inferred from \eqref{eq:bound_y}.
By Grönwall's inequality, we obtain
\begin{align*}
&|y_\alpha(r; \xi_1, \eta) - y_\alpha(r; \xi_2, \eta)|_{\mathbb{C}^q} \nonumber\\
&\hspace*{1cm}\le
\left(\tfrac{d-1}{2} (1 + \mathcal{L}_y) + \left( \tfrac{d-1}{2} \delta + \hat{\psi} \right) \mathcal{L}_x \right) |\eta|_{\mathbb{C}^q} |\xi_1 - \xi_2| \frac{e^{(\frac{d-1}{2}\delta + \hat{\psi})(1 + \mathcal{L}_y) r} - 1}{(\frac{d-1}{2}\delta + \hat{\psi})(1 + \mathcal{L}_y)} e^{\left((\frac{d-1}{2} \delta + \hat{\psi}) (1 + \mathcal{L}_y) - \hat{\lambda}\right)r}.
\end{align*}
Therefore,
\begin{align}\label{eq:bound_integral_2}
&\int_0^{\infty} e^{-\hat{\lambda} r} |y_\alpha(r; \xi_1, \eta) - y_\alpha(r; \xi_2, \eta)|_{\mathbb{C}^q} \, dr \nonumber \\
&\hspace*{1cm} \le \frac{\tfrac{d-1}{2} (1 + \mathcal{L}_y) + \left( \tfrac{d-1}{2} \delta + \hat{\psi} \right)\mathcal{L}_x}{(2\hat{\lambda} -  (\tfrac{3(d-1)}{2} \delta + 2\hat{\psi})(1 + \mathcal{L}_y)) (2\hat{\lambda} -  (\tfrac{d-1}{2} \delta + \hat{\psi})(1 + \mathcal{L}_y))} |\eta|_{\mathbb{C}^q} |\xi_1 - \xi_2|.
\end{align}

Similarly, for initial data with different values for $\eta$, the variation of constant formula~\eqref{eq:variation_of_constant} gives
\begin{align*}
&e^{\hat{\lambda} r}|y_\alpha(r; \xi, \eta_1) - y_\alpha(r; \xi, \eta_2)|_{\mathbb{C}^q}\\
&\hspace*{1cm}\le |\eta_1 - \eta_2|_{\mathbb{C}^q} + \left(\tfrac{d-1}{2} \delta + \hat{\psi}\right) (1 + \mathcal{L}_y) \int_0^r e^{\hat{\lambda} s} |y_\alpha(r; \xi, \eta_1) - y_\alpha(r; \xi, \eta_2)|_{\mathbb{C}^q} \, ds,
\end{align*}
where the inequality follows from \eqref{e:yxieta1xieta2} with $\xi_1 = \xi_2 = \xi$.
By Grönwall's inequality, we obtain
\begin{equation*}
|y_\alpha(r; \xi, \eta_1) - y_\alpha(r; \xi, \eta_2)|_{\mathbb{C}^q}
\le
|\eta_1 - \eta_2|_{\mathbb{C}^q} e^{\left((\frac{d-1}{2} \delta + \hat{\psi}) (1 + \mathcal{L}_y) - \hat{\lambda}\right)r}.
\end{equation*}
Therefore,
\begin{equation}\label{eq:bound_integral_3}
\int_0^{\infty} e^{-\hat{\lambda} r} |y_\alpha(r; \xi, \eta_1) - y_\alpha(r; \xi, \eta_2)|_{\mathbb{C}^q} \, dr
\le \frac{1}{2\hat{\lambda} -  (\tfrac{d-1}{2} \delta + \hat{\psi}) (1 + \mathcal{L}_y)} |\eta_1 - \eta_2|_{\mathbb{C}^q}.
\end{equation}

Repeating this process one last time for two solutions with the same initial data but different $\alpha$, from the variation of constant formula \eqref{eq:variation_of_constant} we have that
\begin{align*}
e^{\hat{\lambda} r}|y_{\alpha_1}(r; \xi, \eta) - y_{\alpha_2}(r; \xi, \eta)|_{\mathbb{C}^q}
&\le \int_0^r e^{\hat{\lambda} s}
\left( \tfrac{d-1}{2} \delta + \hat{\psi} \right) \Bigl[ (1 + \mathcal{L}_y) |y_{\alpha_1}(r; \xi, \eta) - y_{\alpha_2}(r; \xi, \eta)|_{\mathbb{C}^q} \\
&\hspace*{5cm}+ |\alpha_1 - \alpha_2|_{\mathcal{G}_{\delta, \mu, \mathcal{L}_x, \mathcal{L}_y}} |y_{\alpha_2}(r; \xi, \eta)|_{\mathbb{C}^q} \Bigr] \, ds\\
&\le \left( \tfrac{d-1}{2} \delta + \hat{\psi} \right) \frac{e^{(\frac{d-1}{2}\delta + \hat{\psi})(1 + \mathcal{L}_y) r} - 1}{(\frac{d-1}{2}\delta + \hat{\psi})(1 + \mathcal{L}_y)} |\alpha_1 - \alpha_2|_{\mathcal{G}_{\delta, \mu, \mathcal{L}_x, \mathcal{L}_y}} |\eta|_{\mathbb{C}^q} \\
&\hspace*{1.5cm}+
\left( \tfrac{d-1}{2} \delta + \hat{\psi} \right) (1 + \mathcal{L}_y) \int_0^r e^{\hat{\lambda} s} |y_{\alpha_1}(r; \xi, \eta) - y_{\alpha_2}(r; \xi, \eta)|_{\mathbb{C}^q} \, ds,
\end{align*}
where the first inequality follows from \eqref{e:yalpha1alpha2} and the second inequality is inferred from \eqref{eq:bound_y}.
By Grönwall's inequality, we obtain
\begin{align*}
&|y_{\alpha_1}(r; \xi, \eta) - y_{\alpha_2}(r; \xi, \eta)|_{\mathbb{C}^q} \\
&\hspace{1cm}\le
\left( \tfrac{d-1}{2} \delta + \hat{\psi} \right) |\alpha_1 - \alpha_2|_{\mathcal{G}_{\delta, \mu, \mathcal{L}_x, \mathcal{L}_y}} |\eta|_{\mathbb{C}^q} \frac{e^{(\frac{d-1}{2}\delta + \hat{\psi})(1 + \mathcal{L}_y) r} - 1}{(\frac{d-1}{2}\delta + \hat{\psi})(1 + \mathcal{L}_y)} e^{\left((\frac{d-1}{2} \delta + \hat{\psi}) (1 + \mathcal{L}_y) - \hat{\lambda}\right)r}.
\end{align*}
Therefore,
\begin{align}\label{eq:bound_integral_4}
&\int_0^{\infty} e^{-\hat{\lambda} r} |y_{\alpha_1}(r; \xi, \eta) - y_{\alpha_2}(r; \xi, \eta)|_{\mathbb{C}^q} \, dr \nonumber \\
&\qquad \le \frac{\tfrac{d-1}{2}\delta + \hat{\psi}}{(2\hat{\lambda} -  (\tfrac{3(d-1)}{2} \delta + 2\hat{\psi})(1 + \mathcal{L}_y)) (2\hat{\lambda} -  (\tfrac{d-1}{2} \delta + \hat{\psi})(1 + \mathcal{L}_y))} |\alpha_1 - \alpha_2|_{\mathcal{G}_{\delta, \mu, \mathcal{L}_x, \mathcal{L}_y}} |\eta|_{\mathbb{C}^q}.
\end{align}

We are now ready to show that $\LP$ is a contraction on $\mathcal{G}_{\delta, \mu, \mathcal{L}_x, \mathcal{L}_y}$. We start by establishing that $\LP$ maps $\mathcal{G}_{\delta, \mu, \mathcal{L}_x, \mathcal{L}_y}$ to itself.
Since $\alpha \in\mathcal{G}_{\delta, \mu, \mathcal{L}_x, \mathcal{L}_y}$ and $\psi (x, 0, 0) = 0$, we have that $y_\alpha = 0$ whenever $\eta = 0$. It follows that
\[
[\LP(\alpha)](\xi, 0) = \int_0^{\infty} e^{-\Lambda r} \psi (x(r; \xi), 0, \alpha(x(r; \xi), 0)) \, dr = \int_0^{\infty} e^{-\Lambda r} \psi (x(r; \xi), 0, 0) \, dr = 0.
\]
Furthermore, we have that
\begin{align*}
 &\bigl|[\LP(\alpha)](\xi_1, \eta) - [\LP(\alpha)](\xi_2, \eta)\bigr|_{\mathbb{C}^q} \\
 &\hspace*{0.5cm}\le
\left(\tfrac{d-1}{2} (1 + \mathcal{L}_y) + \left(\tfrac{d-1}{2} \delta + \hat{\psi}\right) \mathcal{L}_x \right) |\xi_1 - \xi_2| \int_0^{\infty} e^{-\hat{\lambda} r} 
|y_\alpha(r; \xi_2, \eta)|_{\mathbb{C}^q}
\, dr \\
 &\hspace*{1.5cm} + \left(\tfrac{d-1}{2} \delta + \hat{\psi}\right) (1 + \mathcal{L}_y) \int_0^{\infty} e^{-\hat{\lambda} r} |y_\alpha(r; \xi_1, \eta) - y_\alpha(r; \xi_2, \eta)|_{\mathbb{C}^q} \, dr
  \\
&\hspace*{0.5cm}\le 
\left(
\frac{1}{2\hat{\lambda} - (\tfrac{d-1}{2}\delta + \hat{\psi})(1 + \mathcal{L}_y)} +  \frac{\left( \tfrac{d-1}{2}\delta + \hat{\psi} \right) (1 + \mathcal{L}_y)}{(2\hat{\lambda} - (\tfrac{3(d-1)}{2} \delta + 2\hat{\psi})(1 + \mathcal{L}_y)) (2\hat{\lambda} - (\tfrac{d-1}{2} \delta + \hat{\psi})(1 + \mathcal{L}_y))}
\right) \\
& \hspace*{1.5cm} \times
\left(\tfrac{d-1}{2} (1 + \mathcal{L}_y) + \left( \tfrac{d-1}{2} \delta + \hat{\psi} \right)\mathcal{L}_x \right)
 |\eta|_{\mathbb{C}^q} |\xi_1 - \xi_2| \\
&\hspace*{0.5cm}\le \mathcal{L}_x |\eta|_{\mathbb{C}^q} |\xi_1 - \xi_2|,
\end{align*}
where the first inequality follows from \eqref{e:yxieta1xieta2} with $\eta_1 = \eta_2 = \eta$, the second inequality follows from \eqref{eq:bound_integral_1} and \eqref{eq:bound_integral_2}, while the last inequality follows from the constraint \eqref{eq:graph_constraint2}. Similarly,
\begin{align*}
\bigl[\LP(\alpha)](\xi, \eta_1) - [\LP(\alpha)](\xi, \eta_2)\bigr|_{\mathbb{C}^q} 
&\le \left(\tfrac{d-1}{2} \delta + \hat{\psi}\right) (1 + \mathcal{L}_y) \int_0^{\infty} e^{-\hat{\lambda} r} |y_\alpha(r; \xi, \eta_1) - y_\alpha(r; \xi, \eta_2)|_{\mathbb{C}^q} \, dr \\
&\le \frac{(\tfrac{d-1}{2} \delta + \hat{\psi}) (1 + \mathcal{L}_y)}{2\hat{\lambda} - (\tfrac{d-1}{2} \delta + \hat{\psi}) (1 + \mathcal{L}_y)} |\eta_1 - \eta_2|_{\mathbb{C}^q} \\
&\le\mathcal{L}_y |\eta_1 - \eta_2|_{\mathbb{C}^q},
\end{align*}
\noindent where the first inequality follows from \eqref{e:yxieta1xieta2} with $\xi_1 = \xi_2 = \xi$, the second inequality follows from \eqref{eq:bound_integral_3} and the last inequality follows from the constraint \eqref{eq:graph_constraint3}. Hence, $\LP$ maps $\mathcal{G}_{\delta, \mu, \mathcal{L}_x, \mathcal{L}_y}$ to itself.

Next, we prove that $\LP$ is a contraction on $\mathcal{G}_{\delta, \mu, \mathcal{L}_x, \mathcal{L}_y}$:
\begin{align*}
&\bigl| [\LP(\alpha_1)](\xi, \eta) - [\LP(\alpha_2)](\xi, \eta) \bigr|_{\mathbb{C}^q} \\
&\hspace*{0.5cm}\le \left( \tfrac{d-1}{2} \delta + \hat{\psi} \right) 
(1 + \mathcal{L}_y) \int_0^{\infty} e^{-\hat{\lambda} r} |y_{\alpha_1}(r; \xi, \eta) - y_{\alpha_2}(r; \xi, \eta)|_{\mathbb{C}^q} \, dr  \\
&\hspace*{1cm}+ \left( \tfrac{d-1}{2} \delta + \hat{\psi} \right)  |\alpha_1 - \alpha_2|_{\mathcal{G}_{\delta, \mu, \mathcal{L}_x, \mathcal{L}_y}} \int_0^{\infty} e^{-\hat{\lambda} r} |y_{\alpha_2}(r; \xi, \eta)|_{\mathbb{C}^q} \, dr
 \\
&\hspace*{0.5cm}\le 
\left(
\frac{\left( \tfrac{d-1}{2}\delta + \hat{\psi} \right) (1 + \mathcal{L}_y)}{(2\hat{\lambda} - (\tfrac{3(d-1)}{2} \delta + 2\hat{\psi})(1 + \mathcal{L}_y)) (2\hat{\lambda} - (\tfrac{d-1}{2} \delta + \hat{\psi})(1 + \mathcal{L}_y))} + \frac{1}{2\hat{\lambda} - (\tfrac{d-1}{2}\delta + \hat{\psi})(1 + \mathcal{L}_y)}
\right) \\
&\hspace*{1cm}\times 
\left(\tfrac{d-1}{2} \delta + \hat{\psi} \right)
|\alpha_1 - \alpha_2|_{\mathcal{G}_{\delta, \mu, \mathcal{L}_x, \mathcal{L}_y}} |\eta|_{\mathbb{C}^q} \\
&\hspace*{0.5cm}
\leq 
\left[ 1-\frac{\tfrac{d-1}{2} (1+\mathcal{L}_y)}{\mathcal{L}_x} \right] 
|\alpha_1 - \alpha_2|_{\mathcal{G}_{\delta, \mu, \mathcal{L}_x, \mathcal{L}_y}} |\eta|_{\mathbb{C}^q},
\end{align*}
where the first inequality follows from \eqref{e:yalpha1alpha2}, the second inequality is inferred from \eqref{eq:bound_integral_1} and \eqref{eq:bound_integral_4}, while the last inequality follows from \eqref{eq:graph_constraint2}.

Therefore, we can apply the Banach Fixed-Point Theorem to $\LP$, which yields a unique fixed-point $\tilde{\alpha} \in \mathcal{G}_{\delta, \mu, \mathcal{L}_x, \mathcal{L}_y}$.
Since
\begin{align*}
\frac{d}{d r} \Bigl(e^{- \Lambda r} \talpha \bigl(x(r; \xi), &y_\talpha(r; \xi, \eta)\bigr)\Bigr) \\
&= \frac{d}{d r} \Bigl(e^{- \Lambda r} [\LP(\talpha)](x(r; \xi), y_\talpha(r; \xi, \eta)) \Bigr) \\
&= \frac{d}{d r} \int_0^{\infty} e^{- \Lambda (s + r)} \psi \Bigl(x(s + r; \xi), y_\talpha(s + r; \xi, \eta), \talpha \bigl(x(s + r; \xi), y_\talpha(s + r; \xi, \eta)\bigr)\Bigr) \, ds \\
&= \frac{d}{d r} \int_r^{\infty} e^{- \Lambda s} \psi \Bigl(x(s; \xi), y_\talpha(s; \xi, \eta), \talpha \bigl(x(s; \xi), y_\talpha(s; \xi, \eta)\bigr)\Bigr) \, ds \\
&= - e^{- \Lambda r} \psi \Bigl(x(r; \xi), y_\talpha(r; \xi, \eta), \talpha \bigl(x(r; \xi), y_\talpha(r; \xi, \eta)\bigr)\Bigr),
\end{align*}
we have that 
\[
\frac{d}{d r} \talpha \bigl(x(r; \xi), y_\talpha(r; \xi, \eta)\bigr) = \Lambda \talpha \bigl(x(r; \xi), y_\talpha(r; \xi, \eta)\bigr) - \psi \Bigl(x(r; \xi), y_\talpha(r; \xi, \eta), \talpha \bigl(x(r; \xi), y_\talpha(r; \xi, \eta)\bigr)\Bigr).
\]
Hence $\left(x(r; \xi), y_\talpha(r; \xi, \eta), \talpha\bigl(x(r; \xi), y_\talpha(r; \xi, \eta) \bigr)\right)$ solves~\eqref{eq:diag_autonomous_ode} for all $r \ge 0$, which shows that the local graph of $\talpha$ is an invariant manifold.
As mentioned before, it follows from~\eqref{eq:bound_y} and the constraint~\eqref{eq:graph_constraint1} that $y_\talpha (r; \xi, \eta)$ decays to $0$ as $r \to \infty$. Likewise, $\lim_{r \to \infty} x(r; \xi) = 0$ by~\eqref{e:xexplicit}. Finally, by continuity $\lim_{r \to \infty} z(r)=\lim_{r \to \infty} \talpha\bigl(x(r; \xi), y_\talpha (r; \xi, \eta)\bigr) = \talpha(0, 0) = 0$.
\end{proof}

\begin{remark}
The conditions \eqref{eq:graph_constraint1}, \eqref{eq:graph_constraint2}, \eqref{eq:graph_constraint3} connect the size of the domain $\delta, \mu$ and the Lipschitz constants $\mathcal{L}_x, \mathcal{L}_y$ of the local graph with the smallest eigenvalue $\hat{\lambda}$ and the growth of the nonlinearity $\hat{\psi}$.
Both $\hat{\lambda}$ and $\hat{\psi}$ depend on the problem at hand, namely the nonlinear term $\mathbf{N}$ in \eqref{eq:elliptic}.

To fix ideas, let us consider $\mathcal{L}_x, \mathcal{L}_y \leq 1$ only.
Since $\hat{\lambda}$ is bounded away from zero and $\hat{\psi}$ is small for small $\mu$, the right hand sides in~\eqref{eq:threeconstraints} are small for small $\delta,\mu$. We can thus choose $\mathcal{L}_x, \mathcal{L}_y$ sufficiently large for all three inequalities~\eqref{eq:threeconstraints} to hold. The lower bounds on proper choices for $\mathcal{L}_x, \mathcal{L}_y$ tend to $0$ as $\delta,\mu \to 0$, reflecting the tangency of the center-stable manifold to the $(x,y)$-plane at the stationary point $0$ of \eqref{eq:diag_autonomous_ode}.
\end{remark}

In conclusion of this section, we highlight that Proposition~\ref{prop:graph} not just provides bounds on the local graph of the center-stable manifold, but also describes the dynamics inside this invariant manifold: it is comprised of solutions of~\eqref{eq:diag_autonomous_ode} which converge to the stationary point~$0$. Therefore, a localized solution of~\eqref{eq:radial_elliptic}, limiting to the  equilibrium~$c$, yields a trajectory in this local center-stable manifold.


\section{The Newton-Kantorovich argument} \label{sec:newton-kantorovich}

We begin this section by assuming that there exist $\delta, \mu, \mathcal{L}_x, \mathcal{L}_y > 0$ such that the hypotheses of Proposition~\ref{prop:graph} are verified, yielding the existence of a local graph $\talpha \in \mathcal{G}_{\delta, \mu, \mathcal{L}_x, \mathcal{L}_y}$ of a center-stable manifold of the stationary point $0$ of \eqref{eq:diag_autonomous_ode}. As presented in Section~\ref{sec:zero-finding-problem}, this allows introducing a nonlinear zero-finding problem whose solution corresponds to a localized radial solution of \eqref{eq:elliptic}. Constructive existence of a zero of $F$, defined in \eqref{eq:zero-finding-problem}, is proven using a Newton-Kantorovich argument which we now briefly overview, see Section~\ref{subsec:newton-kantorovich} for details.

The starting point of the argument is a general nonlinear map $F : (X,|\, \cdot \,|_X) \to (Y,|\, \cdot \,|_Y)$ for which a proof of existence of a zero is desired. Considering a finite dimensional projection, one computes a numerical approximation $\bx$ such that $F(\bx) \approx 0$. An injective linear operator $\mathcal{A}$ is then constructed such that $\mathcal{A}F:X \to X$ and such that $| I - \mathcal{A} DF(\bx)|_{\mathscr{B}(X,X)}<1$ (hence, $\mathcal{A}$ serves as an approximate inverse for the Fr\'echet derivative $DF(\bx)$). Next, one defines the Newton-like operator $T(\chi) \bydef \chi -\mathcal{A}F(\chi)$, and proves that there exists a radius $\bar{\rho}>0$ such that $T:\textnormal{cl}(B_{\bar{\rho}}(\bx)) \to \textnormal{cl}(B_{\bar{\rho}}(\bx))$ is a contraction, where $\textnormal{cl}(B_{\bar{\rho}}(\bx))$ is the closed ball of radius $\bar{\rho}$ centered at $\bx$. The Banach Fixed-Point Theorem yields the existence of a unique $\tx \in \textnormal{cl}(B_{\bar{\rho}}(\bx))$ such that $\tx = T(\tx) = \tx-\mathcal{A}F(\tx)$. By injectivity of $\mathcal{A}$, the fixed point $\tx$ of $T$ is the unique zero of $F$ in $\textnormal{cl}(B_{\bar{\rho}}(\bx))$. For this to work in practice, 
sufficient and computable, explicit conditions for the verification that $T(\textnormal{cl}(B_{\bar{\rho}}(\bx)) \subset \textnormal{cl}(B_{\bar{\rho}}(\bx))$ and that $T$ is a contraction need to be derived, see the Newton-Kantorovich Theorem~\ref{thm:radii_polynomial}. The hypotheses of this theorem are finally verified with the computer, yielding a (computer-assisted) proof of existence. 

The above general framework falls in the field of computer-assisted proofs (also called {\em rigorous computations} or {\em rigorous numerics}) in nonlinear analysis. Examples of early pioneering works in the field of computer-assisted proofs in dynamics is the proof of the universality of the Feigenbaum constant \cite{feigenbaum} and the proof of existence of the strange attractor in the Lorenz system \cite{lorenz}. Several computer-assisted proofs of existence of solutions to PDEs have also been presented in the last decades, including eigenvalues enclosure methods \cite{MR2019251,MR1151060}, self-consistent a priori bounds \cite{MR2049869,MR1838755}, a priori error estimates for finite element approximations combined with the Schauder fixed point theorem \cite{MR2161437}, and topological methods based on Conley index theory \cite{MR2136516,MR2329522}. We refer the interested reader to the survey papers \cite{KOCH_ComputeAssisted, NAKAO_VerifiedPDE, TUCKER_ValidatedIntroduction, VANDENBERG_Dynamics,GOMEZ_PDESurvey}, as well as the recent book \cite{MR3971222}.

Let us now introduce a zero-finding problem of the form $F(\,\cdot\,;\talpha)=0$ whose solution corresponds to a localized radial solution of \eqref{eq:elliptic}. 

\subsection{Formulation of the zero-finding problem} \label{sec:zero-finding-problem}

For the formulation of the zero-finding problem we will split the domain $[0,\infty)$ of the function $u$ into subintervals.
First, near $r=0$ we set out to solve \eqref{eq:radial_elliptic} by taking advantage of the analyticity of the solution. Let $\ell>0$ represent an anticipated (lower bound on the) radius of convergence of the power series representation of the localized radial solution (this is confirmed when we find a solution with rescaled Taylor coefficients in the sequence space $\mathscr{T}$ defined in~\eqref{e:defscrT}).
We rescale $[0,\ell]$ to $[0,1]$ by setting $u(r)=v(\ell^{-1} r)$ and write
\[
v(r) \bydef \{v\}_0 + \sum_{n \ge 2} \{v\}_n r^n, \qquad \text{for all } r \in [0, 1].
\]
Solving \eqref{eq:radial_elliptic} with the initial condition $(u,\frac{d}{d r} u)(0)=(\phi ,0)$ for $\phi \in \mathbb{R}^q$, is equivalent to finding the Taylor coefficients $\{v\}_n \in \R^q$, for $n \ge 0$, satisfying
\begin{equation}\label{eq:taylor}
\begin{cases}
\{v\}_0 - \phi = 0, \\
\{v\}_1 = 0, \\
n (n + d - 2) \{v\}_n + \ell^2 \{\mathbf{N}(v)\}_{n-2} = 0, & n \ge 2.
\end{cases}
\end{equation}
We will solve for the sequence of Taylor coefficients in 
\begin{equation}\label{e:defscrT}
\mathscr{T} \bydef \biggl\{ a \in \mathbb{C}^{\mathbb{N}\cup\{0\}} \, : \, |a|_\mathscr{T} \bydef \sum_{n \ge 0} |\{a\}_n| < \infty \biggr\},
\end{equation}
which is a Banach algebra with the Cauchy product
\begin{equation}\label{eq:cauchy_prod}
a *_\mathscr{T} b \bydef \left\{ \sum_{m = 0}^n \{a\}_{n-m} \{b\}_m \right\}_{n \ge 0}.
\end{equation}

We denote $w = (w^{(1)},w^{(2)},w^{(3)})$ with $w^{(1)}=w_1$, $w^{(2)} = (w_2, \dots, w_{1+q})$ and $w^{(3)} = (w_{2+q} ,\dots,w_{1+2q})$. Transforming the coordinates of the ``normal'' form \eqref{eq:diag_autonomous_ode} to the variables $(w^{(1)},w^{(2)},w^{(3)})$, the local graph of the center-stable manifold of the equilibrium $\bc$ of \eqref{eq:autonomous_ode} is given by
\[
(\xi, \eta) \mapsto
\begin{pmatrix}
0 \\
c \\
0
\end{pmatrix} +
\begin{pmatrix}
1 & 0 & 0 \\
0 & \Gamma & \Gamma \\
0 & - \Gamma \Lambda & \hspace{0.25cm}\Gamma \Lambda
\end{pmatrix}
\begin{pmatrix} \xi \\ \eta \\ \talpha(\xi, \eta) \end{pmatrix} ,
\]
where $\talpha \in \mathcal{G}_{\delta, \mu, \mathcal{L}_x, \mathcal{L}_y}$ was obtained in Section~\ref{sec:graph_enclosure}.
In principle, there could exist $r_* \in (0, 1)$ such that $v(r_*)$ already connects to this local graph, but in general one should not expect this.
Thus, we solve \eqref{eq:autonomous_ode} on $[\ell r_*, \ell r_* + L]$ for some $L>0$, subject to the initial condition $(w^{(1)},w^{(2)},w^{(3)})(\ell r_*)=((\ell r_*)^{-1}, v(r_*), \ell^{-1} \frac{d}{d r} v(r_*))$ to ensure that the solution connects smoothly to $v$.
In particular, since $w^{(1)}(r) = \frac{\xi}{\xi (r - r_0) + 1}$ where $w^{(1)}(r_0) = \xi$, it follows from our initial condition $w^{(1)} (\ell r_*) = (\ell r_*)^{-1}$ that $w^{(1)} (r) = \frac{(\ell r_*)^{-1}}{(\ell r_*)^{-1} (r - \ell r_*) + 1} = r^{-1}$ as desired.

Although the first intuition might be to set $r_* = 1$, we will need to choose $r_* < 1$ to guarantee differentiability at $r=r_*$ of the power series $v$ with coefficients in $\mathscr{T}$.
In a nutshell, having fixed a truncation order $n_\mathscr{T} \ge 2$ for the power series $v$, this requires bounding the sequence $n r_*^n$ for all $n > n_\mathscr{T}$; this sequence is strictly decreasing when $r_* \in (0, e^{-1/(n_\mathscr{T}+1)}]$.

For the boundary value problem for $w$ we favour, 
in the spirit of \cite{MR3148084}, the use of Chebyshev polynomials of the first kind given by
\begin{equation}\label{eq:chebyshevpolynomials}
  \cheb_n(s) = \cos( n \arccos(s)), \qquad n=0,1,2,\dots \quad\text{and}\quad  s \in [-1,1].
\end{equation}
Since these are defined on $[-1,1]$, we rescale the domain of $w$ from $[\ell r_*, \ell r_* + L]$ to $[-1,1]$.
Integrating~\eqref{eq:autonomous_ode} using the initial condition mentioned above, yields
\begin{equation}\label{eq:integraleq}
\begin{pmatrix}(\ell r_*)^{-1} \\ v(r_*) \\ \displaystyle \ell^{-1} \frac{d}{d r} v(r_*)\end{pmatrix} + \frac{L}{2} \int_{-1}^s f(w(s')) \, ds' - w(s) = 0, \qquad \text{for all } s \in [-1, 1].
\end{equation}
The rescaling implies that $w^{(2)}(s)=u(\ell r_* + \frac{s+1}{2} L)$ for $s \in [-1,1]$.
Expanding $w$ as a Chebyshev series
\[
w(s) \bydef \{w\}_0 + 2 \sum_{n \ge 1} \{w\}_n \cheb_n (s), \qquad \textnormal{for all } s \in [-1, 1],
\]
the integral equation~\eqref{eq:integraleq} is equivalent to 
\begin{equation}\label{eq:cheb}
\left\{
\begin{aligned}
\begin{pmatrix}
(\ell r_*)^{-1} \\ \sum_{m \ge 0} \{v\}_m r_*^m \\ \sum_{m \ge 1} \frac{m}{\ell} \{v\}_m r_*^m
\end{pmatrix} + \frac{L}{2} \Bigl(\{f(w)\}_0 - \frac{1}{2}\{f(w)\}_1 - 2 \sum_{m \ge 2} \frac{(-1)^m}{m^2 -1} \{f(w)\}_m \Bigr) - \{w\}_0 = 0,
& \qquad n = 0 \\
\frac{L}{2} \frac{\{f(w)\}_{n-1} - \{f(w)\}_{n+1}}{2n} - \{w\}_n = 0, & \qquad n \ge 1.
\end{aligned}
\right.
\end{equation}
We look for Chebyshev coefficients $\{w_i\}$, $i=1,\dots,1+2q$ solving~\eqref{eq:cheb} in the sequence space (for some weight $\nu>1$)
\[
\mathscr{C}_\nu \bydef \left\{ a \in \mathbb{C}^{\mathbb{N}\cup\{0\}} \, : \, |a|_{\mathscr{C}_\nu} \bydef |\{a\}_0| + 2 \sum_{n \ge 1} |\{a\}_n| \nu^n < \infty \right\}.
\]
This is a Banach algebra with the discrete convolution product 
\begin{equation}\label{eq:discrete_conv}
a *_\mathscr{C} b \bydef \left\{ \sum_{m \in \mathbb{Z}} \{a\}_{|n-m|} \{b\}_{|m|} \right\}_{n \ge 0},
\end{equation}
which corresponds to the natural convolution in Fourier space through~\eqref{eq:chebyshevpolynomials}.

\begin{remark}
We expanded $u$ as a Taylor series on $[0, \ell r_*]$ to  deal with the non-autonomous term $r^{-1}$ in \eqref{eq:radial_elliptic}, which is singular at the origin. 
As seen from the set of equations \eqref{eq:taylor}, the apparent singularity, which is naturally caused by the radial coordinate frame, is entirely removed by taking the first–order coefficient of the power series equal to zero.
\end{remark}

At $s=1$ (i.e. $r=\ell r_*+L$) we need the solution to lie in the local center-stable manifold, hence 
\begin{equation}\label{eq:boundary_condition}
\begin{pmatrix} \{w^{(2)}\}_0 + 2\sum_{n \ge 1} \{w^{(2)}\}_n \\ \{w^{(3)}\}_0 + 2\sum_{n \ge 1} \{w^{(3)}\}_n \end{pmatrix} - \begin{pmatrix}
c \\
0
\end{pmatrix} - \begin{pmatrix}
\Gamma & \Gamma \\
-\Gamma \Lambda & \hspace{0.25cm}\Gamma \Lambda
\end{pmatrix} \begin{pmatrix}
\eta \\ \talpha\bigl((\ell r_* + L)^{-1},\eta\bigr)
\end{pmatrix} = 0,
\end{equation}
with $w^{(1)}(1)=(\ell r_*+L)^{-1}$ being satisfied automatically since $w^{(1)}(1)=(\ell r_*)^{-1}$.
Consequently, when $\{v\}$ and $\{w\}$ solve~\eqref{eq:taylor}, \eqref{eq:cheb} and \eqref{eq:boundary_condition}, then the function
\[
u(r) \bydef
\begin{cases}
\displaystyle\sum_{n \ge 0} \{v\}_n (\ell^{-1}r)^n, & r \in [0, \ell r_*], \\
\displaystyle\{w^{(2)}\}_0 + 2 \sum_{n \ge 1} \{ w^{(2)} \}_n \cheb_n (\tfrac{2}{L} (r - \ell r_*) - 1), & r \in (\ell r_*, \ell r_* + L],
\end{cases}
\]
represents a localized radial solution of \eqref{eq:elliptic}: its orbit connects at $r=\ell r_* +L$ to the center-stable manifold of $c$ and converges to $c$ as $r \to \infty$ by Proposition~\ref{prop:graph}.

To formulate the above in the framework of a single zero-finding problem, we define, for some $\nu > 1$, the product Banach space
\[
X \bydef \mathbb{C}^q \times \mathbb{C}^q \times \mathscr{T}^q \times \mathscr{C}^{1 + 2q}_\nu,
\]
equipped with the norm
\[
|\chi|_X \bydef \max \Bigl\{ |\eta|_{\mathbb{C}^q}, |\phi|_{\mathbb{C}^q}, \max_{i=1,\dots,q} |v_i|_\mathscr{T}, \max_{i=1,\dots,1+2q} |w_i|_{\mathscr{C}_\nu} \Bigr\},
\qquad \text{for all } \chi = (\eta, \phi, v, w) \in X.
\]

Given $\talpha \in \mathcal{G}_{\delta, \mu, \mathcal{L}_x, \mathcal{L}_y}$, the local graph of a center-stable manifold of the stationary point $0$ of \eqref{eq:diag_autonomous_ode}, we define $F(\,\cdot\,; \talpha)$ on $X$, without specifying the co-domain, as
\begin{equation} \label{eq:zero-finding-problem}
F (\chi; \talpha) \bydef
\begin{pmatrix}
\{w^{(2)}\}_0 + 2\sum_{n \ge 1 } \{w^{(2)}\}_n - c - \Gamma (\eta + \talpha((\ell r_* + L)^{-1},\eta)) \\
\{w^{(3)}\}_0 + 2\sum_{n \ge 1 } \{w^{(3)}\}_n - \Gamma \Lambda (-\eta + \talpha((\ell r_* + L)^{-1},\eta)) \\
g(\chi) \\
h(\chi)
\end{pmatrix},
\end{equation}
where $g(\chi) \in (\mathbb{C}^{1+2q})^{\mathbb{N}\cup\{0\}}$ is given by
\begin{align*}
\{g\}_n &\bydef
\left\{ 
\begin{aligned}
\{v\}_0 - \phi, &\qquad n = 0, \\
\{v\}_1, &\qquad n = 1, \\
n (n + d - 2) \{v\}_n + \ell^2 \{\mathbf{N}(v)\}_{n-2}, &\qquad n \ge 2,
\end{aligned} 
\right.
\end{align*}
and $h(\chi) \in (\mathbb{C}^{1+2q})^{\mathbb{N}\cup\{0\}}$ is given by
\begin{align*}
\{h\}_n &\bydef 
\left\{ 
\begin{aligned}
\begin{pmatrix}
(\ell r_*)^{-1} \\ \sum_{m \ge 0} \{v\}_m r_*^m \\ \sum_{m \ge 1} \frac{m}{\ell} \{v\}_m r_*^m
\end{pmatrix} + \frac{L}{2} \Bigl(\{f(w)\}_0 - \frac{1}{2}\{f(w)\}_1 - 2 \sum_{m \ge 2} \frac{(-1)^m}{m^2 -1} \{f(w)\}_m \Bigr) - \{w\}_0, & \qquad n =0, 
\\
\frac{L}{2} \frac{\{f(w)\}_{n-1} - \{f(w)\}_{n+1}}{2n} - \{w\}_n, & \qquad n \ge 1.
\end{aligned}
\right.
\end{align*}
Here,
\begin{itemize}
\item $\mathbf{N}$ is understood as a mapping from $\mathscr{T}^q$ to itself by identifying each multiplication with the Cauchy product $*_\mathscr{T}$ defined in \eqref{eq:cauchy_prod};
\item $f$ is understood as a mapping from $\mathscr{C}_\nu^{1+2q}$ to itself by identifying each multiplication with the discrete convolution $*_\mathscr{C}$ defined in \eqref{eq:discrete_conv}.
\end{itemize}
Furthermore, we introduce the notation $F(\,\cdot\,; 0)$ to mean the mapping $F(\,\cdot\,; \talpha)$ where the terms involving $\talpha$ are removed.

To conduct the computer-assisted proof, we will approximate a finite number of Taylor and Chebyshev coefficients of the solution numerically. Formally, given $n_\textnormal{max} \in \mathbb{N} \cup \{0\}$, we define the truncation operator $\pi^{n_\textnormal{max}} : \mathbb{C}^{\mathbb{N} \cup \{0\}} \to \mathbb{C}^{\mathbb{N} \cup \{0\}}$ by
\[
\{ \pi^{n_\textnormal{max}} a \}_n \bydef
\begin{cases}
\{a\}_n, & n \le n_\textnormal{max}, \\
0, & n > n_\textnormal{max},
\end{cases} \qquad \text{for all } a \in \mathbb{C}^{\mathbb{N} \cup \{0\}}.
\]
Given $n_\mathscr{T}, n_\mathscr{C} \in \mathbb{N} \cup \{0\}$, this operator extends in a natural fashion to $\mathscr{T}^q$, $\mathscr{C}_\nu^{1+2q}$ and $X$ as follows:
\begin{alignat*}{2}
\pi^{n_\mathscr{T}}v &\bydef (\pi^{n_\mathscr{T}} v_1, \dots, \pi^{n_\mathscr{T}} v_q), \qquad &&\text{for all } v = (v_1, \dots, v_q) \in \mathscr{T}^q, \\
\pi^{n_\mathscr{C}}w &\bydef (\pi^{n_\mathscr{C}} w_1, \dots, \pi^{n_\mathscr{C}} w_{1+2q}), \qquad &&\text{for all } w = (w_1, \dots, w_{1+2q}) \in \mathscr{C}_\nu^{1+2q}, \\
\pi^{n_\mathscr{T}, n_\mathscr{C}}\chi &\bydef (\eta, \phi, \pi^{n_\mathscr{T}} v, \pi^{n_\mathscr{C}} w), \qquad &&\text{for all } \chi = (\eta, \phi, v, w) \in X.
\end{alignat*}
The complementary operators representing the tail of the coefficients are denoted by
\begin{alignat*}{2}
\pi^{\infty(n_\textnormal{max})}a &\bydef a - \pi^{n_\textnormal{max}} a, \qquad &&\text{for all } a \in \mathbb{C}^{\mathbb{N} \cup \{0\}}, \\
\pi^{\infty(n_\mathscr{T})}v &\bydef (\pi^{\infty(n_\mathscr{T})} v_1, \dots, \pi^{\infty(n_\mathscr{T})} v_q), \qquad &&\text{for all } v = (v_1, \dots, v_q) \in \mathscr{T}^q, \\
\pi^{\infty(n_\mathscr{C})}w &\bydef (\pi^{\infty(n_\mathscr{C})} w_1, \dots, \pi^{\infty(n_\mathscr{C})} w_{1+2q}), \qquad &&\text{for all } w = (w_1, \dots, w_{1+2q}) \in \mathscr{C}_\nu^{1+2q}, \\
\pi^{\infty(n_\mathscr{T}), \infty(n_\mathscr{C})}\chi &\bydef (0, 0, \pi^{\infty(n_\mathscr{T})} v, \pi^{\infty(n_\mathscr{C})} w), \qquad &&\text{for all } \chi = (\eta, \phi, v, w) \in X.
\end{alignat*}

\subsection{The Newton-Kantorovich Theorem} \label{subsec:newton-kantorovich}

In this section, we formulate the Newton-Kantorovich Theorem which gives sufficient conditions to prove the existence of a localized radial solution of \eqref{eq:elliptic}. In the estimates we will need the monomial coefficients of the polynomials, hence we fix their notation in the next remark.

\begin{remark}
Recall that $\mathbf{N} = (\mathbf{N}_1, \dots, \mathbf{N}_q) : \mathscr{T}^q \to \mathscr{T}^q$ and $f = (f_1, \dots, f_{1+2q}) : \mathscr{C}^{1+2q}_\nu \to \mathscr{C}^{1+2q}_\nu$ are polynomials of the same order $\order \geq 2$. That is to say, there exist coefficients $a^{(i)}_{k_1,\dots,k_q}$ and $b^{(i)}_{k_1,\dots,k_q}$ such that
\begin{align*}
\mathbf{N}_i (v) &= \sum_{\begin{smallmatrix}k_1, \dots, k_q \in \mathbb{N} \cup \{0\} \\ 0 \le k_1 + \ldots + k_q \le \order \end{smallmatrix}} a^{(i)}_{k_1,\dots,k_q} v_1^{k_1} *_\mathscr{T} \ldots *_\mathscr{T} v_q^{k_q}, &&\text{for } v \in \mathscr{T}^q,\quad 1 \leq i \leq q, \\
f_i (w) &= \sum_{\begin{smallmatrix}k_1, \dots, k_{1+2q} \in \mathbb{N} \cup \{0\} \\ 0 \le k_1 + \ldots + k_q \le \order \end{smallmatrix}} b^{(i)}_{k_1,\dots,k_{1+2q}} w_1^{k_1} *_\mathscr{C} \ldots *_\mathscr{C} w_{1+2q}^{k_{1+2q}}, &&\text{for } w \in \mathscr{C}^{1+2q}_\nu, \quad 1 \leq i \leq 1+2q .
\end{align*}
We denote by $\mathbf{N}_\textnormal{abs} = (\mathbf{N}_{\textnormal{abs}, 1}, \dots, \mathbf{N}_{\textnormal{abs}, q}) : \mathbb{R}^q \to \mathbb{R}^q$ and $f_\textnormal{abs} = (f_{\textnormal{abs}, 1}, \dots, f_{\textnormal{abs}, 1+2q}) : \mathbb{R}^{1+2q} \to \mathbb{R}^{1+2q}$ the polynomials given by
\begin{align*}
\mathbf{N}_{\textnormal{abs},i} (\zeta) &= \sum_{\begin{smallmatrix}k_1, \dots, k_q \in \mathbb{N} \cup \{0\} \\ 0 \le k_1 + \ldots + k_q \le \order \end{smallmatrix}} \bigl|a^{(i)}_{k_1,\dots,k_q}\bigr| 
\textstyle\prod\limits_{j=1}^{q} \zeta_j^{k_j},
&& \text{for } \zeta \in\mathbb{R}^q,\quad 1 \leq i \leq q,
\\
f_{\textnormal{abs},i} (\zeta) &= \sum_{\begin{smallmatrix}k_1, \dots, k_{1+2q} \in \mathbb{N} \cup \{0\} \\ 0 \le k_1 + \ldots + k_q \le \order \end{smallmatrix}} \bigl|b^{(i)}_{k_1,\dots,k_{1+2q}}\bigr| 
\textstyle\prod\limits_{j=1}^{1+2q}
\zeta_j^{k_j},
&& \text{for } \zeta \in \mathbb{R}^{1+2q}, \quad 1 \leq i \leq 1+2q.
\end{align*}
\end{remark}

We denote by
\[
\textnormal{cl}(B_\rho(\bx)) \bydef
\{
\chi \in X \, : \, |\chi-\bx|_X \le \rho
\}
\]
the closure of the ball of radius $\rho \ge 0$ in $X$, centered at $\bx \in X$.
In the computer-assisted proof it is natural to take $\bx \in \pi^{n_\mathscr{T}, n_\mathscr{C}} X$ for some choice of $n_\mathscr{T}, n_\mathscr{C} \in \mathbb{N}$. 

\begin{theorem}[\bf Newton-Kantorovich]\label{thm:radii_polynomial}
Denote by $\order \geq 2$ the order of the polynomial $\mathbf{N}$ and fix $\ell, L > 0$, $n_\mathscr{T} \ge 2$, $n_\mathscr{C} \ge 1$, $r_* \in (0, e^{-1 / (n_\mathscr{T} + 1)}]$, $\nu > 1$, $\mathcal{L}_x, \mathcal{L}_y > 0$ and $\varrho > 0$. Let $\bx = (\bar{\xi}, \bar{\eta}, \bar{\phi}, \bar{v}, \bar{w}) \in \pi^{n_\mathscr{T}, n_\mathscr{C}} X$ and let $A : \pi^{n_\mathscr{T}, n_\mathscr{C}} X \to \pi^{n_\mathscr{T}, n_\mathscr{C}} X$ be an injective linear operator.

Suppose Proposition \ref{prop:graph} holds true for $\delta = (\ell r_* + L)^{-1}, \mu = |\bar{\eta}|_{\mathbb{C}^q} + \varrho$ and the chosen Lipschitz constant $\mathcal{L}_x, \mathcal{L}_y$ such that $\mathcal{G}_{\delta, \mu, \mathcal{L}_x, \mathcal{L}_y}$ contains the local graph, denoted by $\talpha$, of the center-stable manifold of the stationary point $0$ of \eqref{eq:diag_autonomous_ode}.

Define
\begin{alignat*}{1}
Y & \bydef | A \pi^{n_\mathscr{T}, n_\mathscr{C}} F (\bx; 0) |_X  \\
& \hspace*{1.5cm}+ \max \biggl\{
\frac{\ell^2 | \pi^{\infty(n_\mathscr{T}-2)} \mathbf{N}(\bar{v}) |_{\mathscr{T}^q}}{(n_\mathscr{T}+1)(n_\mathscr{T}+d-1)},
\frac{L(\nu + \nu^{-1})| \pi^{\infty(n_\mathscr{C}-1)} f(\bar{w}) |_{\mathscr{C}_\nu^{1+2q}}}{4(n_\mathscr{C}+1)}
 \biggr\} ,\\
Z_1 &\bydef
| \pi^{n_\mathscr{T}, n_\mathscr{C}} - A \pi^{n_\mathscr{T}, n_\mathscr{C}} DF (\bx; 0) \pi^{n_\mathscr{T}, \order n_\mathscr{C} + 1} |_{\mathscr{B}(X, X)}  \\
& \hspace*{1.5cm}+ \max \biggl\{ \frac{\ell^2 |D\mathbf{N}(\bar{v})|_{\mathscr{B}(\mathscr{T}^q, \mathscr{T}^q)}}{(n_\mathscr{T}+1)(n_\mathscr{T}+d-1)}, \frac{L (\nu + \nu^{-1}) | Df(\bar{w})|_{\mathscr{B}(\mathscr{C}_\nu^{1+2q}, \mathscr{C}_\nu^{1+2q})}}{4(n_\mathscr{C}+1)} \biggr\}  \\
& \hspace*{1.5cm}+| A |_{\mathscr{B}(X, X)} \max  \biggl\{ \frac{2}{\nu^{\order n_\mathscr{C}+2}}, r_*^{n_\mathscr{T} + 1}\max\{ 1, \ell^{-1}(n_\mathscr{T} + 1)\} + \frac{L| Df(\bar{w})|_{\mathscr{B}(\mathscr{C}_\nu^{1+2q}, \mathscr{C}_\nu^{1+2q})}}{\nu^{n_\mathscr{C} + 2}((n_\mathscr{C} + 2)^2 - 1)} \biggr\},\\
Z_2 &\bydef
\left(| A |_{\mathscr{B}(X, X)} + 1\right)
\max\Biggl\{ \ell^2 |D^2 \mathbf{N}_\textnormal{abs}(|\bar{v}_1|_\mathscr{T}+\varrho, \dots, |\bar{v}_q|_\mathscr{T}+\varrho)|_{\mathscr{B}(\mathbb{C}^{q^2}, \mathbb{C}^q)}, \\
& \hspace*{4cm} {\frac{L(1 + \nu)}{2}} |D^2 f_\textnormal{abs}(|\bar{w}_1|_{\mathscr{C}_\nu}+\varrho, \dots, |\bar{w}_{1+2q}|_{\mathscr{C}_\nu}+\varrho)|_{\mathscr{B}(\mathbb{C}^{(1+2q)^2}, \mathbb{C}^{1+2q})}\Biggr\}.
\end{alignat*}
If there exists $\bar{\rho} \in [0, \varrho]$ such that the two inequalities
\begin{subequations}
\begin{align}
\begin{split}\label{eq:radii_constraint1}
Y + |A (\Gamma, \Gamma \Lambda, 0, 0)|_{\mathscr{B}(\mathbb{C}^q, X)} \mathcal{L}_y |\bar{\eta}|_{\mathbb{C}^q} - (1 - Z_1 - |A (\Gamma, \Gamma \Lambda, 0, 0)|_{\mathscr{B}(\mathbb{C}^q, X)} \mathcal{L}_y)\bar{\rho} + \frac{Z_2}{2} \bar{\rho}^2 &\le 0,
\end{split} \\
\begin{split}\label{eq:radii_constraint2}
|A (\Gamma, \Gamma \Lambda, 0, 0)|_{\mathscr{B}(\mathbb{C}^q, X)} \mathcal{L}_y + Z_1 + Z_2 \bar{\rho} &< 1,
\end{split}
\end{align}
\end{subequations}
hold, then there exists a unique $\tx \in \textnormal{cl} (B_{\bar{\rho}} (\bx))$ such that $F(\tx; \talpha)=0$.
\end{theorem}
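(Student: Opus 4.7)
My plan is to apply the Newton--Kantorovich framework sketched at the start of Section~\ref{sec:newton-kantorovich}. First, I would extend the injective finite-dimensional operator $A$ to an injective bounded linear operator $\mathcal{A}$ on all of $X$ by choosing, on the tail $\pi^{\infty(n_\mathscr{T}),\infty(n_\mathscr{C})} X$, an explicit diagonal inverse of the dominant linear parts of the components $g$ and $h$ in~\eqref{eq:zero-finding-problem}: namely, the multiplier $1/[n(n+d-2)]$ on the Taylor block (dictated by the leading coefficient of $\{g\}_n$ in $\{v\}_n$) and an analogous construction on the Chebyshev block (dictated by the $-\{w\}_n$ term of $\{h\}_n$). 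This choice is arranged so that the prefactors $1/[(n_\mathscr{T}+1)(n_\mathscr{T}+d-1)]$ and $1/[4(n_\mathscr{C}+1)]$ appearing in $Y$ and $Z_1$ arise as the operator norms of $\mathcal{A}$ restricted to the tail. I would then define $T(\chi)\bydef \chi-\mathcal{A} F(\chi;\talpha)$ and show it is a contraction of $\textnormal{cl}(B_{\bar\rho}(\bx))$ into itself.

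The proof then reduces to three standard bounds. The \emph{residual} $\mathcal{A} F(\bx;\talpha)$ decomposes into a piece $\mathcal{A} F(\bx;0)$, bounded by $Y$ (the first summand of $Y$ coming from the finite-dimensional block and the max-term from the Taylor/Chebyshev truncation errors described above), plus a piece from the two boundary components of $F$ that contain $\talpha$; using $\talpha(\cdot,0)=0$ together with Lipschitz-in-$\eta$ with constant $\mathcal{L}_y$ from Proposition~\ref{prop:graph}, this second piece yields exactly $|A(\Gamma,\Gamma\Lambda,0,0)|_{\mathscr{B}(\mathbb{C}^q,X)}\mathcal{L}_y|\bar\eta|_{\mathbb{C}^q}$. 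The \emph{derivative defect} $\pi^{n_\mathscr{T},n_\mathscr{C}} - \mathcal{A} DF(\bx;0)$ splits into a finite-by-finite block (first summand of $Z_1$), mixed finite/tail couplings where the polynomial order $\order$ of $\mathbf{N}$ and $f$ limits the effective bandwidth to $\order n_\mathscr{C}+1$ (producing the $|A|_{\mathscr{B}(X,X)}$ term), and a tail-by-tail block whose dominant linear part is inverted exactly by $\mathcal{A}$, leaving the two Lipschitz remainders involving $D\mathbf{N}(\bar v)$ and $Df(\bar w)$ that form the middle max-term of $Z_1$. Finally, the \emph{second-order Lipschitz bound} $Z_2$ comes from a componentwise mean value argument applied to the polynomial maps $\mathbf{N}$ and $f$ via $\mathbf{N}_\textnormal{abs}, f_\textnormal{abs}$ evaluated at the coefficient envelopes $|\bar v_i|_\mathscr{T}+\varrho$ and $|\bar w_i|_{\mathscr{C}_\nu}+\varrho$, exploiting the Banach algebra structure of $\mathscr{T}$ and $\mathscr{C}_\nu$.

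Combining these estimates through the standard radii polynomial manipulation yields
\[
|T(\chi)-\bx|_X \le Y + |A(\Gamma,\Gamma\Lambda,0,0)|_{\mathscr{B}(\mathbb{C}^q,X)}\mathcal{L}_y|\bar\eta|_{\mathbb{C}^q} + \bigl(Z_1 + |A(\Gamma,\Gamma\Lambda,0,0)|_{\mathscr{B}(\mathbb{C}^q,X)}\mathcal{L}_y\bigr)\bar\rho + \tfrac12 Z_2 \bar\rho^2,
\]
\[
|T(\chi_1) - T(\chi_2)|_X \le \bigl(Z_1 + |A(\Gamma,\Gamma\Lambda,0,0)|_{\mathscr{B}(\mathbb{C}^q,X)}\mathcal{L}_y + Z_2 \bar\rho\bigr)|\chi_1-\chi_2|_X,
\]
for every $\chi,\chi_1,\chi_2\in\textnormal{cl}(B_{\bar\rho}(\bx))$. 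Condition~\eqref{eq:radii_constraint1} is precisely the requirement that the right-hand side of the first inequality is at most $\bar\rho$, so that $T(\textnormal{cl}(B_{\bar\rho}(\bx)))\subset\textnormal{cl}(B_{\bar\rho}(\bx))$, while~\eqref{eq:radii_constraint2} makes the prefactor in the second inequality strictly less than $1$. The Banach Fixed-Point Theorem then yields a unique fixed point $\tx\in\textnormal{cl}(B_{\bar\rho}(\bx))$ of $T$, and the injectivity of $\mathcal{A}$ --- which holds on the finite block by hypothesis on $A$ and on the tail since its diagonal entries are all nonzero --- forces $F(\tx;\talpha)=0$.

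The main obstacle is the careful tail bookkeeping behind $Z_1$, which must simultaneously track $D\mathbf{N}(\bar v)$ and $Df(\bar w)$ acting between finite and tail modes, the bandwidth $\order n_\mathscr{C}+1$ induced by the polynomial order of $f$, the asymmetric form of the Taylor truncation (reflected in the $r_*^{n_\mathscr{T}+1}$ factor, in contrast with the symmetric Chebyshev convolution), and the interplay between $|A|_{\mathscr{B}(X,X)}$ and the exact tail inversion of the dominant linear part. By contrast, the $\talpha$-dependence --- a priori the non-smooth ingredient of the problem --- causes comparatively little trouble: Proposition~\ref{prop:graph} was designed precisely to provide the Lipschitz constant $\mathcal{L}_y$, which the framework absorbs directly into the first two terms of both constraints~\eqref{eq:radii_constraint1} and~\eqref{eq:radii_constraint2} in place of a (nonexistent) Fr\'echet derivative of $\talpha$.
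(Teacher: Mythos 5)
Your proposal follows essentially the same route as the paper: the same tail extension $\mathcal{A}_\infty$ of $A$ (diagonal multipliers $1/(n(n+d-2))$ on the Taylor tail, $-I$ on the Chebyshev tail), the same three bounds $Y \ge |T(\bx;0)-\bx|_X$, $Z_1 \ge |DT(\bx;0)|$, $Z_2 \ge \sup|D^2T(\cdot\,;0)|$ combined via Taylor's theorem, the same absorption of the $\talpha$-dependence through $\talpha(\cdot,0)=0$ and the Lipschitz constant $\mathcal{L}_y$, and the same two displayed inequalities feeding into the Banach Fixed-Point Theorem. The only harmless imprecision is attributing the prefactor $L(\nu+\nu^{-1})/(4(n_\mathscr{C}+1))$ to the operator norm of $\mathcal{A}$ on the Chebyshev tail: it in fact comes from the $\tfrac{L}{2}\cdot\tfrac{1}{2n}$ structure of $\{h\}_n$ itself, since $\mathcal{A}_\infty$ acts there simply as $-I$.
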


\begin{proof}
Consider the bounded linear operator $\mathcal{A} \bydef A \pi^{n_\mathscr{T}, n_\mathscr{C}} + \mathcal{A}_\infty$ with $\mathcal{A}_\infty : X \to \pi^{\infty(n_\mathscr{T}), \infty(n_\mathscr{C})} X$ given by
\[
\mathcal{A}_\infty \chi \bydef
(0, 0, v_\infty, -\pi^{\infty(n_\mathscr{C})}w), \qquad \text{for all } \chi = (\eta, \phi, v, w) \in X,
\]
\noindent where
\[
\{v_\infty\}_n \bydef
\begin{cases}
0 & \text{for } n \le n_\mathscr{T}, \\
\displaystyle \frac{\{v\}_n}{n(n + d -2)} & \text{for }  n > n_\mathscr{T}.
\end{cases}
\]

Define $T(\,\cdot\, ; \talpha) : \textnormal{cl} (B_\varrho (\bx)) \to X$ by
\[
T(\chi; \talpha) \bydef \chi - \mathcal{A} F(\chi; \talpha), \qquad \text{for all } \chi \in \textnormal{cl} (B_\varrho (\bx)).
\]
\noindent This operator is well-defined since, for any $\chi = (\eta, \phi, v, w) \in \textnormal{cl} (B_\varrho (\bx))$, we have that $|\eta|_{\mathbb{C}^q} \le |\bar{\eta}|_{\mathbb{C}^q} + \varrho = \mu$ and it follows that $\talpha(\delta, \eta)$ is well-defined. By construction, specifically the action of $\mathcal{A}_\infty$, the operator $\mathcal{A}F(\,\cdot\, ; \talpha)$ maps $\textnormal{cl} (B_\varrho (\bx))$ into $X$. Moreover, $T(\chi ; 0) \bydef \chi - \mathcal{A} F(\chi; 0)$ is twice Fr\'echet differentiable for all $\chi \in \textnormal{cl} (B_\varrho (\bx))$.

Let $\bar{\rho} \in [0, \varrho]$ be such that \eqref{eq:radii_constraint1} and \eqref{eq:radii_constraint2} hold. It is enough to show that $T(\,\cdot\, ; \talpha)$ satisfies the assumptions of the Banach Fixed-Point Theorem in $\textnormal{cl} (B_{\bar{\rho}} (\bx))$. Indeed, $\mathcal{A}$ is injective in view of its definition and injectivity of the matrix $A$, which implies that a fixed-point of $T(\,\cdot\, ; \talpha)$ is a zero of $F(\,\cdot\, ; \talpha)$.

We prove in Appendix~\ref{appendix:YZ1Z2} that
\begin{subequations}
\begin{align}
\begin{split}\label{eq:Y}
Y &\ge |T (\bx; 0) - \bx|_X,
\end{split}\\
\begin{split}\label{eq:Z_1}
Z_1 &\ge |D T (\bx; 0)|_{\mathscr{B}(X, X)},
\end{split}\\
\begin{split}\label{eq:Z_2}
Z_2 &\ge \sup_{\chi \in \textnormal{cl}(B_\varrho(\bx))} |D^2 T (\chi; 0)|_{\mathscr{B}(X, \mathscr{B}(X, X))}.
\end{split}
\end{align}
\end{subequations}
Let $\chi = (\eta, \phi, v, w) \in \textnormal{cl} (B_{\bar{\rho}} (\bx))$. From Taylor's Theorem we have that
\begin{align*}
&|T (\chi; 0) - \bx|_X \\
&\quad= | T (\bx; 0) - \bx + [DT(\bx; 0)](\chi - \bx) + \int_0^1 (1 - t) [D^2 T(\bx + t(\chi - \bx); 0)] (\chi - \bx, \chi - \bx) \, dt |_X \\
&\quad\le | T (\bx; 0) - \bx |_X + |[D T(\bx; 0)](\chi - \bx)|_X + \int_0^1 (1 - t) |[D^2 T(\bx + t(\chi - \bx); 0)] (\chi - \bx, \chi - \bx)|_X \, dt \\
&\quad\le Y + Z_1 \bar{\rho} + Z_2 \bar{\rho}^2 \int_0^1 (1 - t)\, dt \\
&\quad= Y + Z_1 \bar{\rho} + \frac{Z_2}{2} \bar{\rho}^2 .
\end{align*}
Since $F(\chi;\alpha)$ is affine linear in $\alpha$, 
it follows from \eqref{eq:radii_constraint1} that
\begin{align*}
|T (\chi; \talpha) - \bx|_X
&= |T (\chi; 0) - \bx + \mathcal{A}(F(\chi; 0) - F(\chi; \talpha)|_X \\
&\le |T (\chi; 0) - \bx|_X + |\mathcal{A}(F(\chi; 0) - F(\chi; \talpha)|_X \\
&= |T (\chi; 0) - \bx|_X + |A
\begin{pmatrix}
\Gamma \talpha(\delta, \eta) , \Gamma \Lambda \talpha(\delta, \eta) , 0  , 0
\end{pmatrix}
|_X \\
&\le Y + Z_1 \bar{\rho} + \tfrac{1}{2} Z_2 \bar{\rho}^2 + |A (\Gamma, \Gamma \Lambda, 0, 0)|_{\mathscr{B}(\mathbb{C}^q, X)} \mathcal{L}_y (|\bar{\eta}|_{\mathbb{C}^q} + \bar{\rho}) \\
& \le \bar{\rho} ,
\end{align*}
which implies that $T(\,\cdot\, ; \talpha)$ maps $ \textnormal{cl} (B_{\bar{\rho}} (\bx))$ into itself.

Next, let $\chi = (\eta, \phi, v, w), \chi' = (\eta', \phi', v', w') \in \textnormal{cl} (B_{\bar{\rho}} (\bx))$. By the Mean Value Theorem, we have that
\begin{align*}
&|T (\chi; 0) - T (\chi'; 0)|_X \\
&\hspace*{1.5cm}\le \sup_{h \in \textnormal{cl} (B_{\bar{\rho}} (\bx))} | D T (h) |_{\mathscr{B}(X, X)} |\chi - \chi'|_X \\
&\hspace*{1.5cm}\le \sup_{h \in \textnormal{cl} (B_{\bar{\rho}} (\bx))} \left(|DT(\bx)|_{\mathscr{B}(X, X)} + \int_0^1 | D^2 T (\bx + t(h -\bx); 0) |_{\mathscr{B}(X, \mathscr{B}(X, X))} |h - \bx|_X \, dt \right) |\chi - \chi'|_X \\
&\hspace*{1.5cm}\le \left(Z_1 + Z_2 \bar{\rho} \right) |\chi - \chi'|_X .
\end{align*}
We infer that 
\begin{align*}
|T (\chi; \talpha) - T (\chi'; \talpha)|_X
&= |T (\chi; 0) - T (\chi'; 0) + \mathcal{A}(F(\chi ; 0) - F(\chi ; \talpha) - (F(\chi' ; 0) - F(\chi' ; \talpha)))|_X \\
&\le |T (\chi; 0) - T (\chi'; 0)|_X + |\mathcal{A}(F(\chi ; 0) - F(\chi ; \talpha) - (F(\chi' ; 0) - F(\chi' ; \talpha)))|_X \\
&= |T (\chi; 0) - T (\chi'; 0)|_X + |A
\begin{pmatrix}
\Gamma (\talpha(\delta, \eta) - \talpha(\delta, \eta')) , \Gamma \Lambda (\talpha(\delta, \eta) - \talpha(\delta, \eta')) , 0 , 0
\end{pmatrix}
|_X \\
&\le (Z_1 + Z_2 \bar{\rho} + |A (\Gamma, \Gamma \Lambda, 0, 0)|_{\mathscr{B}(\mathbb{C}^q, X)} \mathcal{L}_y) | \chi - \chi' |_X,
\end{align*}
which, in view of \eqref{eq:radii_constraint2}, shows that $T(\,\cdot\, ; \talpha)$ is a contraction on $\textnormal{cl} (B_{\bar{\rho}} (\bx))$.

Therefore, the operator $T(\,\cdot\, ; \talpha)$ satisfies the Banach Fixed-Point Theorem whenever \eqref{eq:radii_constraint1} and \eqref{eq:radii_constraint2} hold.
\end{proof}

There is a subtle technicality arising from our definition of $X$. Indeed, we choose to work in $\mathbb{C}$ to seamlessly allow $\Lambda$ to be comprised of complex eigenvalues. Consequently, we must ensure that $\tx = (\tilde{\eta}, \tilde{\phi}, \tilde{v}, \tilde{w}) \in \textnormal{cl}(B_{\bar{\rho}}(\bx))$, obtained from Theorem \ref{thm:radii_polynomial}, satisfies $\tilde{\phi} \in \mathbb{R}^q, \tilde{v} \in (\mathbb{R}^{\mathbb{N} \cup \{0\}})^q, \tilde{w} \in (\mathbb{R}^{\mathbb{N} \cup \{0\}})^{1+2q}$. Fortunately, this property is ``inherited'' from the numerical approximation $\bx$.

\begin{lemma}\label{lem:sym}
Let $\bx = (\bar{\eta}, \bar{\phi}, \bar{v}, \bar{w})$ and $\tx = (\tilde{\eta}, \tilde{\phi}, \tilde{v}, \tilde{w})$ as in Theorem \ref{thm:radii_polynomial}.
If $\bar{\phi} \in \mathbb{R}^q, \bar{v} \in (\mathbb{R}^{\mathbb{N} \cup \{0\}})^q, \bar{w} \in (\mathbb{R}^{\mathbb{N} \cup \{0\}})^{1+2q}$, then $\tilde{\phi} \in \mathbb{R}^q, \tilde{v} \in (\mathbb{R}^{\mathbb{N} \cup \{0\}})^q, \tilde{w} \in (\mathbb{R}^{\mathbb{N} \cup \{0\}})^{1+2q}$.
\end{lemma}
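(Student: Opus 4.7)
The approach is a symmetry-and-uniqueness argument. I will construct an antilinear involution $\conjugate$ on $X$ whose fixed-point set includes all elements with real $\phi, v, w$ components, show that $F(\,\cdot\,;\talpha)$ and the approximate inverse $\mathcal{A}$ used in the proof of Theorem~\ref{thm:radii_polynomial} are both $\conjugate$-equivariant, so that the Newton-like operator $T(\,\cdot\,;\talpha) = \mathrm{id} - \mathcal{A} F(\,\cdot\,;\talpha)$ commutes with $\conjugate$. The hypothesis of the lemma, supplemented by the analogous (implicit) reality condition on $\bar{\eta}$ that is built into the numerical procedure, makes $\bx$ a fixed point of $\conjugate$; consequently the closed ball $\textnormal{cl}(B_{\bar\rho}(\bx))$ is $\conjugate$-invariant. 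Applying $\conjugate$ to the identity $\tx = T(\tx;\talpha)$ then shows that $\conjugate\tx$ is another fixed point of $T(\,\cdot\,;\talpha)$ in the same ball, so the uniqueness part of Theorem~\ref{thm:radii_polynomial} forces $\conjugate\tx = \tx$, which is precisely the desired conclusion.

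To define $\conjugate$, observe that $Df(\bc)$ is a real matrix, hence the eigenvalues $\lambda_1,\dots,\lambda_q$ of positive real part are closed under complex conjugation; fix a permutation $\sigma$ of $\{1,\dots,q\}$ with $\lambda_{\sigma(i)} = \overline{\lambda_i}$, write $P$ for the associated permutation matrix, and choose $\Gamma$ so that $\overline{\Gamma} = \Gamma P$ (hence $\overline{\Gamma\Lambda} = \Gamma\Lambda P$ as $P^2 = I$). Set
\[
\conjugate(\eta, \phi, v, w) \bydef (P\overline{\eta}, \overline{\phi}, \overline{v}, \overline{w}),
\]
with componentwise conjugation on the sequence components, and equip the codomain of $F$ with componentwise conjugation as well. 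A block-by-block verification then yields the equivariance $F(\conjugate\chi;\talpha) = \conjugate F(\chi;\talpha)$: the first two blocks reduce to the identity $\overline{\Gamma(\eta + \talpha(\delta,\eta))} = \Gamma(P\overline{\eta} + \talpha(\delta,P\overline{\eta}))$ (and similarly for $\Gamma\Lambda$), which holds once one has the equivariance $\talpha(\delta,P\overline{\eta}) = P\overline{\talpha(\delta,\eta)}$ of the graph itself, while the third and fourth blocks rely only on the fact that $\mathbf{N}$ and $f$ have real coefficients. The equivariance of $\talpha$ in turn is obtained by re-running the Banach Fixed-Point argument of Proposition~\ref{prop:graph}: the Lyapunov-Perron operator $\LP$ defined in \eqref{e:defLP} is $\conjugate$-equivariant and the zero graph is $\conjugate$-invariant, hence so is its unique fixed point.

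For the approximate inverse $\mathcal{A} = A\pi^{n_\mathscr{T}, n_\mathscr{C}} + \mathcal{A}_\infty$, the tail $\mathcal{A}_\infty$ only rescales coefficients by real factors and is trivially $\conjugate$-equivariant. The finite-dimensional operator $A$ approximates the inverse of $\pi^{n_\mathscr{T}, n_\mathscr{C}} DF(\bx;0) \pi^{n_\mathscr{T}, n_\mathscr{C}}$, which is itself $\conjugate$-equivariant since $\bx$ is $\conjugate$-invariant; if $A$ is not already $\conjugate$-equivariant one may replace it by its symmetrization $\tfrac{1}{2}(A + \conjugate A \conjugate^{-1})$, which preserves injectivity and does not enlarge the bounds $Y$, $Z_1$, $Z_2$. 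Combining everything yields $T(\conjugate\,\cdot\,;\talpha) = \conjugate T(\,\cdot\,;\talpha)$, and the uniqueness argument of the first paragraph then completes the proof. The main obstacle will be a careful bookkeeping of the permutation $P$ through the $\eta$-component, and verifying the $\conjugate$-equivariance of $\talpha$ via Proposition~\ref{prop:graph}; this requires patience rather than new analytic ideas.
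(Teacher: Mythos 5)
Your strategy is essentially the paper's: construct a conjugation operator on $X$ (conjugation composed with the permutation pairing conjugate eigenvalues in the $\eta$-slot, plain conjugation elsewhere), prove the equivariance $\talpha(\xi,\conjugate_0(\eta))=\conjugate_0(\talpha(\xi,\eta))$ by showing that the conjugated graph is again a fixed point of the Lyapunov--Perron operator $\LP$ and invoking its uniqueness, and then conclude $\tx=\conjugate(\tx)$ by uniqueness in the ball. The one place where you diverge is the final uniqueness step: you route it through the Newton operator $T=\mathrm{id}-\mathcal{A}F$, which forces you to also establish equivariance of $\mathcal{A}$, whereas the paper simply applies the conjugation to the identity $F(\tx;\talpha)=0$ to get $F(\conjugate(\tx);\talpha)=F(\tx;\talpha)^\dagger=0$ and then uses that $\tx$ is the unique \emph{zero of $F$} (not merely the unique fixed point of $T$) in $\textnormal{cl}(B_{\bar{\rho}}(\bx))$ — a consequence of the injectivity of $\mathcal{A}$ already recorded in Theorem~\ref{thm:radii_polynomial}. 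This shortcut makes any hypothesis on $A$ unnecessary. Your fallback of replacing $A$ by its symmetrization $\tfrac{1}{2}(A+\conjugate A\conjugate^{-1})$ is not sound as stated: symmetrization does \emph{not} preserve injectivity in general (already on $\mathbb{C}$ with $\conjugate$ the complex conjugation and $A$ multiplication by $i$, the symmetrization is zero), and moreover the lemma takes $\tx$ as produced by Theorem~\ref{thm:radii_polynomial} with whatever $A$ was actually used, so one cannot retroactively swap it. Dropping the $\mathcal{A}$-equivariance step in favour of the paper's direct argument on $F$ repairs this cleanly; the rest of your outline, including the observation that the implicit condition $\bar{\eta}=\conjugate_0(\bar{\eta})$ must be part of the hypothesis, matches the paper.
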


\begin{proof}
Suppose there are $2q'$ ($\le q$) complex eigenvalues. Without loss of generality, $\Lambda$ and $\Gamma = \begin{pmatrix} \Gamma_1 \cdots \Gamma_q \end{pmatrix}$ satisfy $\Lambda_{i,i} = \Lambda_{i+1,i+1}^\dagger$ and $\Gamma_i = \Gamma_{i+1}^\dagger$ for all $i = 1, 3, \dots, 2q'-1$ where $\dagger$ denotes complex conjugation. Consider $\conjugate : X \to X$ given by
\[
\conjugate(\chi) \bydef (\conjugate_0 (\eta), \phi^\dagger, v^\dagger, w^\dagger), \qquad \text{for all } \chi = (\eta, \phi, v, w) \in X,
\]
where $\conjugate_0 : \mathbb{C}^q \to \mathbb{C}^q$ is defined by
\[
\bigl( \conjugate_0 (\eta) \bigr)_i \bydef
\begin{cases}
\eta_{i+1}^\dagger, & i \in \{1, 3, \dots, 2q'-1\}, \\
\eta_{i-1}^\dagger, & i \in \{2, 4, \dots, 2q'\}, \\
\eta_i^\dagger, & i \in \{2q'+1, \dots, q\},
\end{cases} \qquad \text{for all } \eta \in \mathbb{C}^q.
\]
It is straightforward to see that $\phi \in \mathbb{R}^q, v \in (\mathbb{R}^{\mathbb{N} \cup \{0\}})^q, w \in (\mathbb{R}^{\mathbb{N} \cup \{0\}})^{1+2q}$ whenever $\chi = (\eta, \phi, v, w) \in X$ satisfies $\chi = \conjugate(\chi)$. Hence, the lemma claims that if $\bx = \conjugate(\bx)$, then $\tx = \conjugate(\tx)$.

Observe that for $\chi \in \textnormal{cl}(B_{\bar{\rho}}(\bx))$ we have $\conjugate(\chi) \in \textnormal{cl}(B_{\bar{\rho}}(\bx))$ since $|\conjugate(\chi) - \bx|_X = |\conjugate(\chi) - \conjugate(\bx)|_X = |\conjugate(\chi - \bx)|_X = |\chi - \bx|_X \le \bar{\rho}$.

Furthermore, our ordering of $\Gamma$ and $\Lambda$ yields $\Gamma \conjugate_0(\eta) = \left(\Gamma \eta\right)^\dagger$ and $\Lambda \conjugate_0(\eta) = \conjugate_0( \Lambda \eta)$ for all $\eta \in \mathbb{C}^q$; in particular,
$
\eta^\dagger = (\Gamma \Gamma^{-1} \eta)^\dagger = \Gamma \conjugate_0 (\Gamma^{-1} \eta).
$
Thus, for all $x \in \mathbb{C}$, $y, z \in \mathbb{C}^q$, we have that
\begin{align*}
\psi(x, \conjugate_0(y), \conjugate_0(z))
&=
\frac{1}{2}\Big(
(d-1) x \conjugate_0(-y + z) +
\Lambda^{-1} \Gamma^{-1}\Big( \mathbf{N}(c+\Gamma \conjugate_0(y +  z)) - D\mathbf{N}(c)\Gamma \conjugate_0(y + z) \Big)
\Big) \\
&=
\frac{1}{2}\Big(
(d-1) x \conjugate_0(-y + z) +
\Lambda^{-1} \Gamma^{-1}\Big( \mathbf{N}(c+\Gamma (y +  z)) - D\mathbf{N}(c)\Gamma (y + z) \Big)^\dagger
\Big) \\
&=
\frac{1}{2}\Big(
(d-1) x \conjugate_0(-y + z) +
\Lambda^{-1} \conjugate_0\left( \Gamma^{-1}\Big( \mathbf{N}(c+\Gamma (y +  z)) - D\mathbf{N}(c)\Gamma (y + z) \right)\Big)
\Big) \\
&=
\conjugate_0 ( \psi(x, y, z) ).
\end{align*}

Recall that we place ourselves in the context of Theorem \ref{thm:radii_polynomial}. Hence we consider $\talpha \in \mathcal{G}_{\delta, \mu, \mathcal{L}_x, \mathcal{L}_y}$ to be the local graph of the center-stable manifold as in the theorem.
Define $[\conjugate_1 (\talpha)](\xi, \eta) \bydef \conjugate_0 (\talpha(\xi, \conjugate_0 (\eta)))$ and observe that $\conjugate_1 (\talpha) \in \mathcal{G}_{\delta, \mu, \mathcal{L}_x, \mathcal{L}_y}$. We shall follow the notation introduced in the proof of Proposition~\ref{prop:graph}. Namely, we denote by $(x(r; \xi), y_\alpha(r; \xi, \eta))$ the unique solution of the initial value problem
\[
\left\{
\begin{aligned}
&\frac{dx}{d r}  = -x^2, \\
&\frac{dy}{d r}  = -\Lambda y + \psi (x, y, \alpha(x, y)), \\
&(x(0), y(0)) = (\xi, \eta),
\end{aligned}
\right.
\]
where $\xi \in [0, \delta]$ and $|\eta|_{\mathbb{C}^q} \le \mu$.
Fix $\eta \in \mathbb{C}^q$, then
\begin{align*}
\frac{d}{dr} \conjugate_0 (y_{\conjugate_1(\talpha)} (r; \xi, \eta))
&= \conjugate_0 \Bigl[
-\Lambda y_{\conjugate_1(\talpha)}(r; \xi, \eta)
+ \psi\Bigl(
x(r; \xi), y_{\conjugate_1(\talpha)}(r; \xi, \eta), \conjugate_1 \bigl(
\talpha \bigl( x(r; \xi), y_{\conjugate_1(\talpha)}(r; \xi, \eta) \bigr)
\bigr)
\Bigr)
\Bigr]\\
&= -\Lambda \conjugate_0 (y_{\conjugate_1(\talpha)}(r; \xi, \eta))
+ \psi\Bigl(
x(r; \xi), \conjugate_0 (y_{\conjugate_1(\talpha)}(r; \xi, \eta)),
\talpha \bigl(x, \conjugate_0 (y_{\conjugate_1(\talpha)}(r; \xi, \eta))
\bigr)
\Bigr).
\end{align*}
\noindent Since $\conjugate_0 (y_{\conjugate_1(\talpha)}(0; \xi, \eta)) = \conjugate_0 (\eta)$, it follows, by definition, that $\conjugate_0 (y_{\conjugate_1(\talpha)}(r; \xi, \eta)) = y_\talpha (r; \xi, \conjugate_0 (\eta))$. Hence, $y_{\conjugate_1(\talpha)}(r; \xi, \eta) = \conjugate_0(y_\talpha (r; \xi, \conjugate_0 (\eta)))$. Thus,
\begin{align*}
[\conjugate_1(\talpha)](\xi, \eta)
&=	
\conjugate_0 ( \talpha (\xi, \conjugate_0 (\eta) ) ) \\
&= \conjugate_0 \bigl( [\LP(\talpha)] (\xi, \conjugate_0 (\eta) ) \bigr) \\
&= \conjugate_0 \int_0^\infty e^{-\Lambda r} \psi \Bigl(x(r; \xi), y_\talpha(r; \xi, \conjugate_0(\eta)), \talpha \bigl(x(r; \xi), y_\talpha(r; \xi, \conjugate_0(\eta))\bigr)\Bigr) \, dr \\
&= \int_0^\infty e^{-\Lambda r} \psi \Bigl(x(r; \xi), \conjugate_0 (y_\talpha(r; \xi, \conjugate_0(\eta))), \conjugate_0 \bigl(\talpha \bigl(x(r; \xi), y_\talpha(r; \xi, \conjugate_0(\eta))\bigr)\bigr)\Bigr) \, dr \\
&= \int_0^\infty e^{-\Lambda r} \psi \Bigl(x(r; \xi), y_{\conjugate_1(\talpha)}(r;\xi, \eta), \conjugate_0 \bigl(\talpha \bigl(x(r;\xi), \conjugate_0 y_{\conjugate_1(\talpha)}(r;\xi, \eta) \bigr)\bigr) \Bigr) \, dr \\
&= \int_0^\infty e^{-\Lambda r} \psi \Bigl(x(r; \xi), y_{\conjugate_1(\talpha)}(r;\xi, \eta), [\conjugate_1 (\talpha)] \bigl(x(r;\xi), y_{\conjugate_1(\talpha)}(r;\xi, \eta) \bigr) \Bigr) \, dr \\
&= [\LP(\conjugate_1(\talpha))](\xi, \eta).
\end{align*}
However, according to Proposition \ref{prop:graph}, $\talpha$ is the unique fixed-point of $\LP$ in $\mathcal{G}_{\delta, \mu, \mathcal{L}_x, \mathcal{L}_y}$, which implies that $\conjugate_1 (\talpha) = \talpha$. In particular, $\talpha(\xi, \conjugate_0 (\eta)) = [\conjugate_1 (\talpha)](\xi, \conjugate_0(\eta)) = \conjugate_0 (\talpha(\xi, \conjugate_0(\eta)))$.

Therefore,
\begin{align*}
F(\conjugate(\tx); \talpha)
&=
\begin{pmatrix}
\{\tilde{w}^{(2)}\}_0^\dagger + 2\sum_{n \ge 1 } \{\tilde{w}^{(2)}\}_n^\dagger - c - \Gamma (\conjugate_0(\tilde{\eta}) + \talpha((\ell r_* + L)^{-1},\conjugate_0(\tilde{\eta}))) \\
\{\tilde{w}^{(3)}\}_0^\dagger + 2\sum_{n \ge 1 } \{\tilde{w}^{(3)}\}_n^\dagger - \Gamma \Lambda (-\conjugate_0(\tilde{\eta}) + \talpha((\ell r_* + L)^{-1},\conjugate_0(\tilde{\eta}))) \\
g(\conjugate(\tx)) \\
h(\conjugate(\tx))
\end{pmatrix} \\
&=
\begin{pmatrix}
\left[ \{\tilde{w}^{(2)}\}_0 + 2\sum_{n \ge 1 } \{\tilde{w}^{(2)}\}_n - c - \Gamma (\tilde{\eta} + \talpha((\ell r_* + L)^{-1},\tilde{\eta})) \right]^\dagger \\
\left[ \{\tilde{w}^{(3)}\}_0^\dagger + 2\sum_{n \ge 1 } \{\tilde{w}^{(3)}\}_n^\dagger - \Gamma \Lambda (-\tilde{\eta} + \talpha((\ell r_* + L)^{-1},\tilde{\eta}))  \right]^\dagger \\
g(\tx)^\dagger \\
h(\tx)^\dagger
\end{pmatrix} \\
&= F(\tx; \talpha)^\dagger \\
&= 0.
\end{align*}
Since $\tx$ is the unique zero of $F(\,\cdot\,; \talpha)$ in $\textnormal{cl}(B_{\bar{\rho}}(\bx))$, it follows that $\tx = \conjugate(\tx)$ as desired.
\end{proof}

To end this section, we retrieve 
an a posteriori $C^0$-error bound for a localized radial solution of \eqref{eq:elliptic}.
To formulate the result, we first need an expression for the solution in the local center-stable manifold. We set 
$r_0 \bydef \ell r_* + L$ and
\[
\tilde{\eta} \bydef \textstyle \tfrac{1}{2} \Gamma^{-1} \bigl(\{\tilde{w}^{(2)}\}_0 + 2\sum_{n \ge 1 } \{\tilde{w}^{(2)}\}_n -c\bigr) - \tfrac{1}{2} \Lambda^{-1} \Gamma^{-1} \bigl( \{\tilde{w}^{(3)}\}_0 + 2\sum_{n \ge 1 } \{\tilde{w}^{(3)}\}_n \bigr).
\]
Then the solution in the local-center manifold is given by
\[
\tilde{u}_{\talpha}(r) \bydef c + \Gamma \, \Bigl( y_{\talpha}(r -r_0 ; r_0^{-1} , \tilde{\eta}) + \talpha\bigl(r^{-1},y_{\talpha}(r-r_0 ; r_0^{-1} , \tilde{\eta})\bigr) \Bigr), \qquad \text{for all }r\in (r_0,\infty),
\]
where $y_\alpha$ is given implicitly by the variation of constants formula~\eqref{eq:variation_of_constant}.

\begin{corollary}\label{cor:C0bound}
Let $\tx = (\tilde{\eta}, \tilde{\phi}, \tilde{v}, \tilde{w}) \in \textnormal{cl}(B_{\bar{\rho}}(\bx))$ be the zero of $F(\,\cdot\,; \talpha)$ obtained by applying Theorem~\ref{thm:radii_polynomial} to a case where $\bx = \conjugate(\bx)$. Consider the resulting localized radial solution $r \in [0, \infty) \mapsto \tilde{u}(r)$ of \eqref{eq:elliptic} satisfying 
\begin{align*}
\tilde{u}(r) &=
\begin{cases}
\displaystyle\sum_{n \ge 0} \{\tilde{v}\}_n (\ell^{-1}r)^n, & r \in [0, \ell r_*], \\
\displaystyle\{\tilde{w}^{(2)}\}_0 + 2 \sum_{n \ge 1} \{ \tilde{w}^{(2)} \}_n \cheb_n (\tfrac{2}{L} (r - \ell r_*) - 1), & r \in (\ell r_*, r_0 ],\\
\tilde{u}_{\talpha}(r) & r\in (r_0 ,\infty).
\end{cases}
\end{align*}
Define 
\begin{align*}
\bar{u}(r) \bydef
\begin{cases}
\displaystyle\sum_{n = 0}^{n_\mathscr{T}} \{\bar{v}\}_n (\ell^{-1}r)^n, & r \in [0, \ell r_*], \\
\displaystyle\{\bar{w}^{(2)}\}_0 + 2 \sum_{n = 1}^{n_\mathscr{C}} \{ \bar{w}^{(2)} \}_n \cheb_n (\tfrac{2}{L} (r - \ell r_*) - 1), & r \in (\ell r_*, r_0 ], \\
c + \Gamma e^{-\Lambda (r - r_0)} \bar{\eta} , & r \in (r_0 , \infty).
\end{cases}
\end{align*}
Then,
\begin{equation}\label{eq:C0error}
\sup_{r \in [0, \infty)} |\tilde{u}(r) - \bar{u}(r)|_{\mathbb{C}^q} \le \max\bigl\{\bar{\rho}, |\Gamma|_{\mathscr{B}(\mathbb{C}^q,\mathbb{C}^q)} (\bar{\rho} + \mathcal{L}_y (|\bar{\eta}|_{\mathbb{C}^q} + \bar{\rho}))\bigr\}.
\end{equation}
\end{corollary}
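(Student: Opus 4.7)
The plan is to bound $|\tilde u(r)-\bar u(r)|_{\mathbb{C}^q}$ separately on each of the three subintervals $[0,\ell r_*]$, $(\ell r_*,r_0]$, and $(r_0,\infty)$ on which $\tilde u$ and $\bar u$ are defined by distinct formulas, and then take the maximum. For the first two intervals, the bound will reduce to a comparison between the sequence-space norm controlling $|\tx-\bx|_X\le\bar\rho$ and the $C^0$-norm; for the third interval, we invoke the Lipschitz control of $\talpha$ inherited from Proposition~\ref{prop:graph}.

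On $[0,\ell r_*]$, the choice $r_*\le e^{-1/(n_\mathscr{T}+1)}<1$ ensures $(\ell^{-1}r)^n\le 1$, and since $\bar v$ is supported on indices $n\le n_\mathscr{T}$,
\[
|\tilde u(r)-\bar u(r)|_{\mathbb{C}^q}\le\max_{i=1,\dots,q}\sum_{n\ge 0}|\{\tilde v_i-\bar v_i\}_n|=\max_i|\tilde v_i-\bar v_i|_\mathscr{T}\le|\tx-\bx|_X\le\bar\rho.
\]
On $(\ell r_*,r_0]$, the bounds $|\cheb_n(s)|\le 1$ for $s\in[-1,1]$ together with $|\cdot|_{\mathscr{C}_1}\le|\cdot|_{\mathscr{C}_\nu}$ (since $\nu>1$) yield the analogous estimate with $v$ replaced by $w^{(2)}$, again bounded by $\bar\rho$.

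On $(r_0,\infty)$, we decompose
\[
\tilde u(r)-\bar u(r)=\Gamma\bigl(y_\talpha(r-r_0;r_0^{-1},\tilde\eta)-e^{-\Lambda(r-r_0)}\bar\eta\bigr)+\Gamma\talpha\bigl(r^{-1},y_\talpha(r-r_0;r_0^{-1},\tilde\eta)\bigr).
\]
For the manifold-graph term, $\talpha(x,0)=0$ and the Lipschitz constant $\mathcal{L}_y$ in $y$ give $|\talpha(r^{-1},y_\talpha)|_{\mathbb{C}^q}\le\mathcal{L}_y|y_\talpha|_{\mathbb{C}^q}$, and the decay estimate~\eqref{eq:bound_y} (valid under~\eqref{eq:graph_constraint1}) yields $|y_\talpha(s;r_0^{-1},\tilde\eta)|_{\mathbb{C}^q}\le|\tilde\eta|_{\mathbb{C}^q}\le|\bar\eta|_{\mathbb{C}^q}+\bar\rho$, which controls this term by $\mathcal{L}_y(|\bar\eta|_{\mathbb{C}^q}+\bar\rho)$. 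For the linear-flow discrepancy, set $\zeta(r):=y_\talpha(r-r_0;r_0^{-1},\tilde\eta)-e^{-\Lambda(r-r_0)}\bar\eta$: then $|\zeta(r_0)|_{\mathbb{C}^q}=|\tilde\eta-\bar\eta|_{\mathbb{C}^q}\le\bar\rho$, and $\zeta$ satisfies $\zeta'=-\Lambda\zeta+\psi(x,y_\talpha,\talpha(x,y_\talpha))$; combining the decay $|e^{-\Lambda s}|\le e^{-\hat\lambda s}$ with the bound $|\psi(x,y,\talpha(x,y))|_{\mathbb{C}^q}\le(\tfrac{d-1}{2}\delta+\hat\psi)(1+\mathcal{L}_y)|y|_{\mathbb{C}^q}$ (extracted from the computations leading to~\eqref{e:yxieta1xieta2}) and the exponential decay of $|y_\talpha|$ in a Grönwall-type argument controls $|\zeta(r)|_{\mathbb{C}^q}$ by $\bar\rho$. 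Multiplying by $|\Gamma|$ bounds this interval's contribution by $|\Gamma|\bigl(\bar\rho+\mathcal{L}_y(|\bar\eta|_{\mathbb{C}^q}+\bar\rho)\bigr)$, and taking the maximum across all three subintervals yields the claimed inequality~\eqref{eq:C0error}.

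The main obstacle is the $\zeta$-estimate on $(r_0,\infty)$: absorbing the nonlinear correction $\psi$ in the variation-of-constants identity~\eqref{eq:variation_of_constant} while retaining the clean bound $\bar\rho$ rests on the exponential gap $\hat\lambda>(\tfrac{d-1}{2}\delta+\hat\psi)(1+\mathcal{L}_y)$ from~\eqref{eq:graph_constraint1}; this is precisely the gain that motivates the refined Lyapunov-Perron setup of Section~\ref{sec:graph_enclosure} rather than an off-the-shelf invariant-manifold estimate.
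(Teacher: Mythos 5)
Your proof follows essentially the same route as the paper's: on the first two intervals you bound the $C^0$-distance by the (weighted) $\ell^1$-norms of the Taylor and Chebyshev coefficient differences, hence by $\bar{\rho}$, and on $(r_0,\infty)$ you factor out $\Gamma$, control the graph term via $\talpha(x,0)=0$, the Lipschitz constant $\mathcal{L}_y$ and the decay bound $|y_\talpha|_{\mathbb{C}^q}\le|\tilde{\eta}|_{\mathbb{C}^q}\le|\bar{\eta}|_{\mathbb{C}^q}+\bar{\rho}$, and control the linear-flow discrepancy through the variation-of-constants formula. The only step you leave as a sketch --- the uniform bound $|y_{\talpha}(r-r_0;r_0^{-1},\tilde{\eta})-e^{-\Lambda(r-r_0)}\bar{\eta}|_{\mathbb{C}^q}\le\bar{\rho}$ --- is treated at exactly the same level of detail in the paper, which likewise only asserts it ``in a similar vein'' as \eqref{eq:bound_y}.
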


\begin{proof}
On the one hand,
\[
\sup_{r \in [0, r_0]} | \tilde{u}(r) - \bar{u}(r) |_{\mathbb{C}^q}
\le \max\left(|\tilde{v} - \bar{v}|_{\mathscr{T}^q}, |\tilde{w}^{(2)} - \bar{w}^{(2)}|_{\mathscr{C}^q_\nu} \right)
\le
|\tx - \bx|_{X}
\le \bar{\rho},
\]
since the supremum norm of a Taylor or Chebyshev series is bounded by the weighted $\ell^1$ norm of its coefficients (namely, the norm of $\mathscr{T}$ and $\mathscr{C}_\nu$ respectively).

On the other hand,
\begin{align*}
\sup_{r \in (r_0, \infty)} |\tilde{u}(r) - \bar{u}(r)|_{\mathbb{C}^q}
&=
\sup_{r \in (r_0, \infty)} |\tilde{u}_\talpha(r) - \bar{u}(r)|_{\mathbb{C}^q} \\
&\le |\Gamma|_{\mathscr{B}(\mathbb{C}^q,\mathbb{C}^q)} \sup_{r \in (r_0 , \infty)} \Bigl| y_{\talpha}(r - r_0 ; r_0^{-1} , \tilde{\eta}) - e^{-\Lambda (r - r_0)} \bar{\eta} + \talpha\bigl(r^{-1},y_{\talpha}(r-r_0 ; r_0^{-1} , \tilde{\eta})\bigr) \Bigr|_{\mathbb{C}^q} \\
&\le |\Gamma|_{\mathscr{B}(\mathbb{C}^q,\mathbb{C}^q)} (\bar{\rho} + \mathcal{L}_y (|\bar{\eta}|_{\mathbb{C}^q} + \bar{\rho})) ,
\end{align*}
where we used the variation of constant formula \eqref{eq:variation_of_constant} to derive the uniform bound $|y_{\talpha}(r - r_0 ; r_0^{-1} , \tilde{\eta}) - e^{-\Lambda (r - r_0)} \bar{\eta}|_{\mathbb{C}^q} \leq \bar{\rho}$ for all $r \in (r_0, \infty)$, in a similar vein as we did to obtain \eqref{eq:bound_y} in the proof of Proposition \ref{prop:graph}.
\end{proof}

\begin{remark}[Exponential decay]\label{rem:decay}
The localized radial solution $\tilde{u}_\talpha$ of \eqref{eq:elliptic} decays exponentially fast to the constant solution~$c$ of \eqref{eq:elliptic}.
Indeed, for $r > r_0$, it follows from the fixed point construction in Section~\ref{sec:graph_enclosure} that we have $|u_\talpha (r) - c|_{\mathbb{C}^q}
\le |\Gamma|_{\mathscr{B}(\mathbb{C}^q, \mathbb{C}^q)} (1 + \mathcal{L}_y) | y_{\talpha}(r -r_0 ; r_0^{-1} , \tilde{\eta})|_{\mathbb{C}^q}$.
By condition~\eqref{eq:graph_constraint1} the estimate \eqref{eq:bound_y} shows that $|u_\talpha (r) - c|_{\mathbb{C}^q}$ decays exponentially with rate at least $(\frac{d-1}{2}\delta + \hat{\psi})(1+\mathcal{L}_y) - \hat{\lambda} < 0$.

In fact, the asymptotic decay rate is $-\hat{\lambda}$.
First, note that $\psi(x, y, z) = \frac{d-1}{2} x (-y + z) + \Upsilon (y + z)$ where $\Upsilon ( \zeta ) \bydef
\frac{1}{2} \Lambda^{-1} \Gamma^{-1}\Big( \mathbf{N}(c+\Gamma \zeta) - D\mathbf{N}(c)\Gamma \zeta \Big)$.
It follows that there exists $\kappa = \kappa(\mu) > 0$ such that $| D \Upsilon (\zeta)|_{\mathbb{C}^q} \le \kappa (1 + \mathcal{L}_y)^{-1} |\zeta|_{\mathbb{C}^q}$ for all $|\zeta|_{\mathbb{C}^q} \le \mu$.
Thus, by the Mean Value Theorem, cf.~\eqref{e:yxieta1xieta2},
\begin{align*}
&\left|\psi\Bigl(r^{-1}, y_\talpha(r-r_0; r_0^{-1}, \tilde{\eta}), \talpha \bigl(r^{-1}, y_\talpha(r-r_0; r_0^{-1}, \tilde{\eta}) \bigr)\Bigr)\right|_{\mathbb{C}^q} \\
&\hspace*{3cm}= \left|\psi\Bigl(r^{-1}, y_\talpha(r-r_0; r_0^{-1}, \tilde{\eta}), \talpha \bigl(r^{-1}, y_\talpha(r-r_0; r_0^{-1}, \tilde{\eta}) \bigr)\Bigr) - \psi(r^{-1}, 0, 0)\right|_{\mathbb{C}^q} \\
&\hspace*{3cm}\le C(r) |y_\talpha(r-r_0; r_0^{-1}, \tilde{\eta})|_{\mathbb{C}^q}, 
\end{align*}
where $C(r) \bydef (\tfrac{d-1}{2} r^{-1} 
+ \kappa |\tilde{\eta}|_{\mathbb{C}^q} e^{\left((\frac{d-1}{2}\delta 
+ \hat{\psi})(1 + \mathcal{L}_y) - \hat{\lambda} \right) (r-r_0)}) (1 + \mathcal{L}_y)$.
Finally, by using the estimates \eqref{eq:rearranged}, \eqref{eq:bound_y} and Gr\"onwall's inequality, we obtain
\[
|y_\talpha (r-r_0; r_0^{-1}, \tilde{\eta})|_{\mathbb{C}^q} \le |\tilde{\eta}|_{\mathbb{C}^q} e^{\int_{r_0}^r C(s) \, ds - \hat{\lambda} (r-r_0)},
\]
with $(r - r_0)^{-1} \int_{r_0}^r C(s) \, ds \to 0$ as $r \to \infty$.
\end{remark}

\begin{remark}[Transversality]
A proof of a localized radial solution of \eqref{eq:elliptic} via Theorem \ref{thm:radii_polynomial} requires the connecting orbit to be isolated.
This transpires geometrically as a transverse intersection between the set of solutions, i.e. orbits of~\eqref{eq:autonomous_ode} (parameterized by $u(0) = \phi \in \R^q$ at $r=0$, see~\eqref{eq:taylor}), and the center-stable manifold (parameterized by $\eta \in \mathbb{C}^q$ satisfying $\conjugate_0(\eta)=\eta$) at the section $w^{(1)} = r_0^{-1}$. We explore this briefly.

Of course, to properly discuss such a transverse intersection, one first needs to establish the differentiability of the local graph $\talpha$ at $(r_0^{-1}, \tilde{\eta})$. 
It follows from standard ODE theory arguments that, although we initially obtain $\talpha$ in a Lipschitz class in Proposition~\ref{prop:graph}, the graph of $\talpha$ is in fact $C^1$. 
Then, Theorem \ref{thm:radii_polynomial} implies that
\begin{align*}
| I - \mathcal{A} DF(\tx ; \talpha)|_{\mathscr{B}(X, X)}
&= | I - \mathcal{A} DF(\bx ; 0)|_{\mathscr{B}(X, X)} + | \mathcal{A} (DF(\bx ; 0) - DF(\tx ; \talpha))|_{\mathscr{B}(X, X)} \\
&\le |A (\Gamma, \Gamma \Lambda, 0, 0)|_{\mathscr{B}(\mathbb{C}^q, X)} \mathcal{L}_y + Z_1 + Z_2 \bar{\rho} \\
&< 1,
\end{align*}
which implies the injectivity of $DF(\tx ; \talpha)$.
On the other hand, by assuming for contradiction that the intersection introduced above is not transverse, one can construct an element in the kernel of $DF(\tx ; \talpha)$, hence establishing transversality. However, providing the details for these arguments would lead us too far astray from the scope of this article, hence we leave those to the interested reader.
\end{remark}


\section{Applications} \label{sec:applications}

We begin this section with a practical note on Theorem \ref{thm:radii_polynomial}.
In Section \ref{sec:newton-kantorovich}, we demonstrated that a zero of $F(\,\cdot\,; \talpha)$ yields a localized radial solution of \eqref{eq:elliptic} which reaches the local center-stable manifold $\talpha$ of the stationary point $0$ of \eqref{eq:diag_autonomous_ode}.
The main result, Theorem \ref{thm:radii_polynomial}, revolves around a contraction occurring in a vicinity of a numerical approximation $\bx = (\bar{\eta}, \bar{\phi}, \bar{v}, \bar{w})$ of a zero of $F(\,\cdot\,; 0)$.
In other words, we pursue a zero of $F(\,\cdot\,; \talpha)$ by hunting with $F(\,\cdot\,; 0)$, and in particular its finite dimensional truncation.

The ingredients involved in Theorem \ref{thm:radii_polynomial} are correlated.
The larger $\ell$ and $L$, the smaller the center-stable coordinates $\delta$ and $\mu$ will be and the smaller we may choose $\mathcal{L}_x, \mathcal{L}_y$. On the other hand, larger $\ell$ and $L$ results in higher truncation orders $n_\mathscr{T}$ and $n_\mathscr{C}$ for the Taylor and Chebyshev series to represent the desired functions sufficiently well.

To shed some light on applying Theorem \ref{thm:radii_polynomial} in practice, we list the main steps of the procedure:
\begin{enumerate}
\item (\emph{A numerical approximate solution}). By some suitable method, which depends on the PDE under consideration, 
compute numerically a localized solution $u_0: [0, r_0] \to \mathbb{R}^q$ of \eqref{eq:radial_elliptic}, with $u_0(r_0) \approx c$ for some zero $c$ of $\mathbf{N}$. Set $\bar{\phi}=u_0(0)$.

\item (\emph{The eigenvalue problem}). Solve rigorously $- D\mathbf{N}(c) \Gamma = \Gamma \Lambda^2$ for $\Lambda, \Gamma : \mathbb{C}^q \to \mathbb{C}^q$ such that $\Lambda$ is a diagonal matrix whose diagonal entries have strictly positive real parts; in other words, the diagonal entries correspond to the unstable eigenvalues of $Df(\bc)$.
If there are $2q'$ ($\le q$) complex eigenvalues, organize the columns of $\Lambda$ and $\Gamma = \begin{pmatrix} \Gamma_1 & \cdots & \Gamma_q \end{pmatrix}$ such that $\Lambda_{i,i} = \Lambda_{i+1,i+1}^\dagger$ and $\Gamma_i = \Gamma_{i+1}^\dagger$ for all $i = 1, 3, \dots, 2q'-1$.
Then, compute a numerical solution $\bar{\eta} \in \mathbb{C}^q$ of $u_0(r_0) = c + \Gamma \bar{\eta} $. Enforce $\bar{\eta} = \conjugate_0(\bar{\eta})$.

\item (\emph{Series representation}). Fix $\ell \in (0, r_0]$ and approximate $r \in [0, \ell] \mapsto u_0 (r)$ by a Taylor series $r \in [0, 1] \mapsto \bar{v}(r) \approx u_0(\ell^{-1} r)$ of order $n_\mathscr{T} \ge 2$.
Then fix $r_* \in (0, e^{-1/(n_\mathscr{T} + 1)}]$, and approximate $r \in [\ell r_*, r_0] \mapsto u_0 (r)$ by a Chebyshev series $s \in [-1, 1] \mapsto \bar{w}(s) \approx u_0(\frac{1-s}{2} \ell r_* + \frac{s+1}{2}r_0)$ of order $n_\mathscr{C} \ge 1$; in particular, $L = r_0 - \ell r_*$.

\item (\emph{Refine the approximate solution}).
At this stage, we have obtained $\bx = (\bar{\eta}, \bar{\phi}, \bar{v}, \bar{w}) \in \pi^{n_\mathscr{T}, n_\mathscr{C}} X$ such that $\bx$ lies in the scope of Lemma \ref{lem:sym}.
As explained at the beginning of this section, one may refine the constructed approximate zero $\bx$ of the mapping $\chi \in \pi^{n_\mathscr{T}, n_\mathscr{C}} X \mapsto \pi^{n_\mathscr{T}, n_\mathscr{C}} F(\chi ; 0)$ by applying Newton's method.
As a rule of thumb, the truncation orders $n_\mathscr{T}, n_\mathscr{C}$ are chosen such that the last coefficients of the series expansions are of the order of \emph{machine epsilon} (e.g. $\sim 10^{-16}$ in double precision). Nevertheless, in practice, higher truncation orders may be required to bring about the desired contraction.

\item (\emph{The center-stable manifold}). Retrieve $\hat{\lambda}$ as defined in \eqref{eq:lambda_hat}.
Compute $\delta = (\ell r_* + L)^{-1}$. Fix an a priori maximal error bound $\varrho > 0$ and set $\mu = |\bar{\eta}|_{\mathbb{C}^q} + \varrho$. Determine $\hat{\psi}$ satisfying the inequality \eqref{eq:psi_hat}.
Choose $\mathcal{L}_x, \mathcal{L}_y > 0$ and verify the inequalities \eqref{eq:graph_constraint1}, \eqref{eq:graph_constraint2} and \eqref{eq:graph_constraint3} to prove that Proposition \ref{prop:graph} holds.

\item (\emph{Choose $A$ and check the Newton-Kantorovich inequalities}).
Compute a numerical inverse $A$ of $\pi^{n_\mathscr{T}, n_\mathscr{C}} DF(\bx; 0) \pi^{n_\mathscr{T}, n_\mathscr{C}}$.
Choose a decay rate $\nu > 1$ for the Chebyshev sequence space. The approximate Chebyshev coefficients $\bar{w}$ guide this choice: $\nu$ cannot exceed their decay rate.
Finally, compute rigorously $Y$, $Z_1$ and $Z_2$. Note that to satisfy inequality \eqref{eq:radii_constraint1}, then the a priori maximal error $\varrho$ must necessarily be greater than $Y + |A(\Gamma, \Gamma \Lambda, 0, 0)|_{\mathscr{B}(\mathbb{C}^q, X)} \mathcal{L}_y |\bar{\eta}|_{\mathbb{C}^q}$.
Verify the inequalities \eqref{eq:radii_constraint1} and \eqref{eq:radii_constraint2} to prove that Theorem \ref{thm:radii_polynomial} holds.
\end{enumerate}

In the following applications, all the computations have been performed in Julia \cite{Julia} via the package \emph{RadiiPolynomial.jl} (cf.~\cite{RadiiPolynomial.jl}) which relies on the package \emph{IntervalArithmetic.jl} (cf.~\cite{IntervalArithmeticJulia}) for rigorous floating-point computations. The code is available at \cite{Code}. All figures have been generated via the package \emph{Makie.jl} (cf.~\cite{Makie.jl}).

\subsection{Example \ref{ex:exampleKG}: cubic Klein-Gordon equation}
\label{sec:3DKG}

Our first example consists in proving localized radial stationary solutions of the cubic Klein-Gordon equation as described in Theorem \ref{thm:exampleKG}. Recall that the equation \eqref{eq:exampleKG} under study reads
\[
U_{tt} = \Delta U - U + \beta_1 U^2 +\beta_2 U^3, \qquad U = U(t, x) \in \R, \quad t \ge 0, \quad x \in \R^3,
\]
with parameters $\beta_1, \beta_2 \in \R$. Hence, the stationary solutions solve \eqref{eq:elliptic} with $q=1$, $d=3$ and
\[
\mathbf{N}(U) = - U + \beta_1 U^2 +\beta_2 U^3.
\]
For $\beta_1 = \beta_2 = 1$, we compute numerical approximations of two distinct localized radial stationary solutions about the constant equilibrium $c = 0$.

Since $D\mathbf{N}(0) = -1$, the equation $-D\mathbf{N}(0) \Gamma = \Lambda^2 \Gamma$ is satisfied for $\Gamma = \Lambda = 1$; it follows that $\hat{\lambda} = 1$. 
Moreover, for any $\mu > 0$, the inequality \eqref{eq:psi_hat} is satisfied for
\[
\hat{\psi} = \left(|\beta_1| + \frac{3}{2}|\beta_2| (1+\mathcal{L}_y)\mu \right)(1+\mathcal{L}_y)\mu.
\]

Figure \ref{fig:KG} shows the numerical approximations of the two localized radial stationary solution of the cubic Klein-Gordon equation \eqref{eq:exampleKG}. We successfully verified Proposition \ref{prop:graph} and Theorem \ref{thm:radii_polynomial} and obtained a $C^0$-error bound~\eqref{eq:C0error} for each numerical approximation depicted in Figures \ref{fig:exampleKG} and \ref{fig:KG}: $2.9 \times 10^{-7}$ and $5.5 \times 10^{-6}$, respectively.
For both proofs, we chose the Lipschitz constants of the local graph of the center-stable manifold to be $\mathcal{L}_x = 1$ and $\mathcal{L}_y \approx 0.05$. The domain of the local graph is contained in $[0, 0.068] \times [-2.3  \times 10^{-7}, 2.3 \times 10^{-7}]$ and $[0, 0.066] \times [-5.1  \times 10^{-6}, 5.1 \times 10^{-6}]$, respectively.

The proofs show a disparity between the order of the Taylor and Chebyshev series expansions needed to have a good approximation of the solutions and the order required to trigger the contraction involved in Theorem \ref{thm:radii_polynomial}. Specifically, the truncation orders $n_\mathscr{T},n_\mathscr{C}$ used for Theorem \ref{thm:radii_polynomial} are split as follows. The \emph{numerical truncation orders} $n_{\mathscr{T},\textnormal{num}},n_{\mathscr{C},\textnormal{num}}$ mark the limit beyond which the Taylor and Chebyshev series coefficients, respectively, are judged negligible (these coefficients are approximated by zeros).
The \emph{padding orders} $n_{\mathscr{T},\textnormal{pad}},n_{\mathscr{C},\textnormal{pad}}$ correspond to the integers $n_{\mathscr{T},\textnormal{num}},n_{\mathscr{C},\textnormal{num}}$ such that $n_\mathscr{T} = n_{\mathscr{T},\textnormal{num}} + n_{\mathscr{T},\textnormal{pad}}$ and $n_\mathscr{C} = n_{\mathscr{C},\textnormal{num}} + n_{\mathscr{C},\textnormal{pad}}$ are sufficiently large to satisfy all assumptions in Theorem \ref{thm:radii_polynomial}.
This dichotomy is especially noticeable for the Chebyshev series expansion of the second solution of \eqref{eq:autonomous_ode}.
For the proof of the first solution depicted in Figure \ref{fig:KGa}, we used $n_{\mathscr{T},\textnormal{num}} = 60$, $n_{\mathscr{T},\textnormal{pad}} = 100$ and $n_{\mathscr{C},\textnormal{num}} = 85$, $n_{\mathscr{C},\textnormal{pad}} = 200$ (finished in under 1 second on a laptop with an M1 CPU, using 8 threads, with 8 GiB of max.\ available RAM), whereas for the proof of the second solution depicted in Figure \ref{fig:KGb}, we used $n_{\mathscr{T},\textnormal{num}} = 150$, $n_{\mathscr{T},\textnormal{pad}} = 100$ and $n_{\mathscr{C},\textnormal{num}} = 200$, $n_{\mathscr{C},\textnormal{pad}} = 15,000$ (finished in under 2 hours on a server with an Intel E7-4850 v4 Broadwell CPU, using 32 threads, with 2510 GiB of max.\ available RAM).

As noted in Example \ref{ex:exampleKG}, for any given integer $m \ge 0$, there exists a localized radial stationary solution of \eqref{eq:exampleKG} with exactly $m$ zeros. Here, we only looked at the case $m=0$ and $m=1$.
While in principle our methodology seamlessly applies for larger $m$, the profile of these solutions warrants caution. Indeed, numerical simulations suggest that as $m$ grows, the height of the solution at $r=0$ and its steepness increase. This transpires in Theorem \ref{thm:radii_polynomial} as a drastic growth of the $Z_1$ bound which is mitigated by taking $n_\mathscr{C}$ much larger. In such cases, we emphasize that combining the present methodology with the domain decomposition strategy in \cite{MR4292534} should relieve the burden of having only one Chebyshev series representing such a ``wild'' profile. 

\begin{figure}[!ht]
\centering
\begin{subfigure}[b]{0.49\textwidth}
\centering
\includegraphics[width=\textwidth]{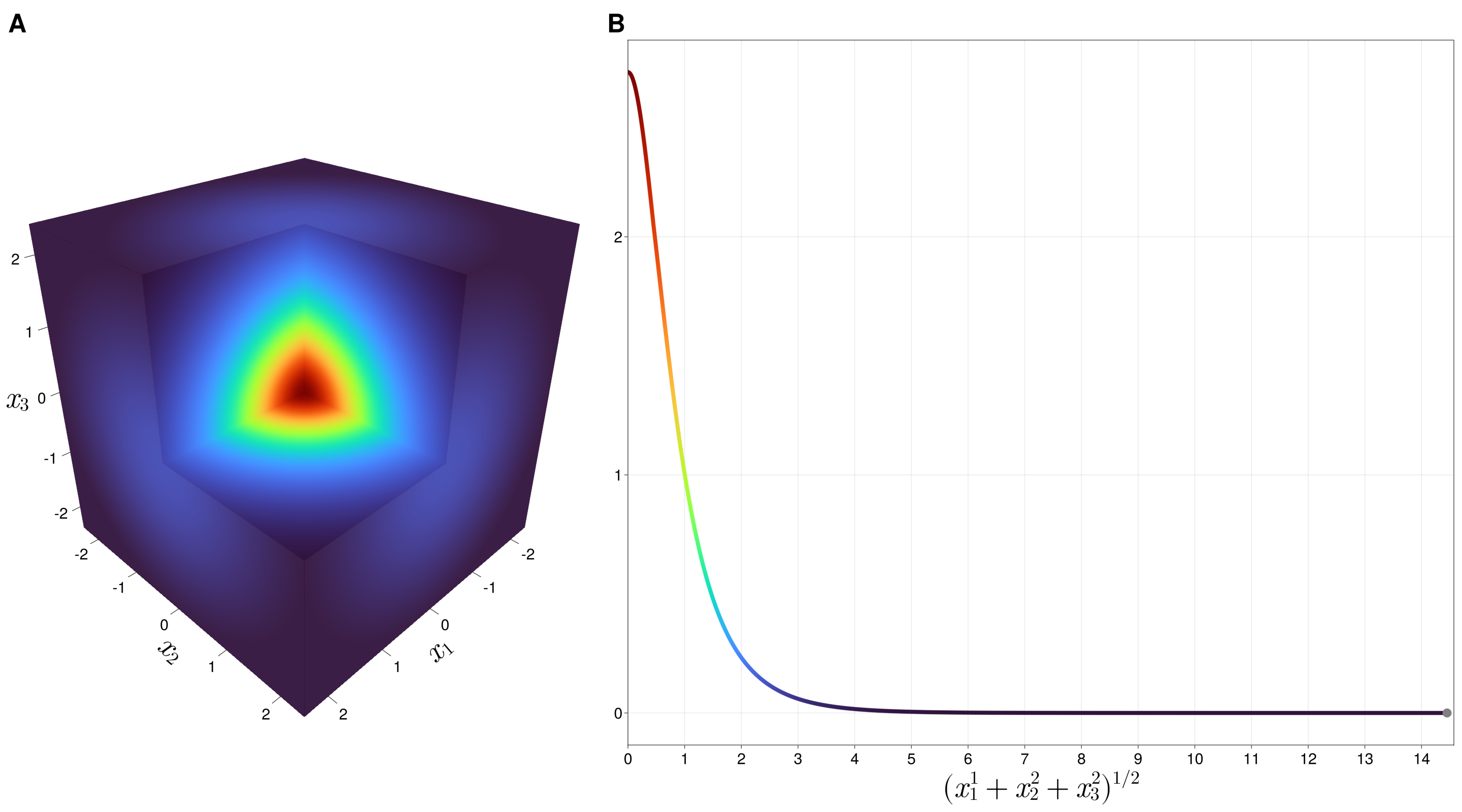}
\caption{}\label{fig:KGa}
\end{subfigure}
\hfill
\begin{subfigure}[b]{0.49\textwidth}
\centering
\includegraphics[width=\textwidth]{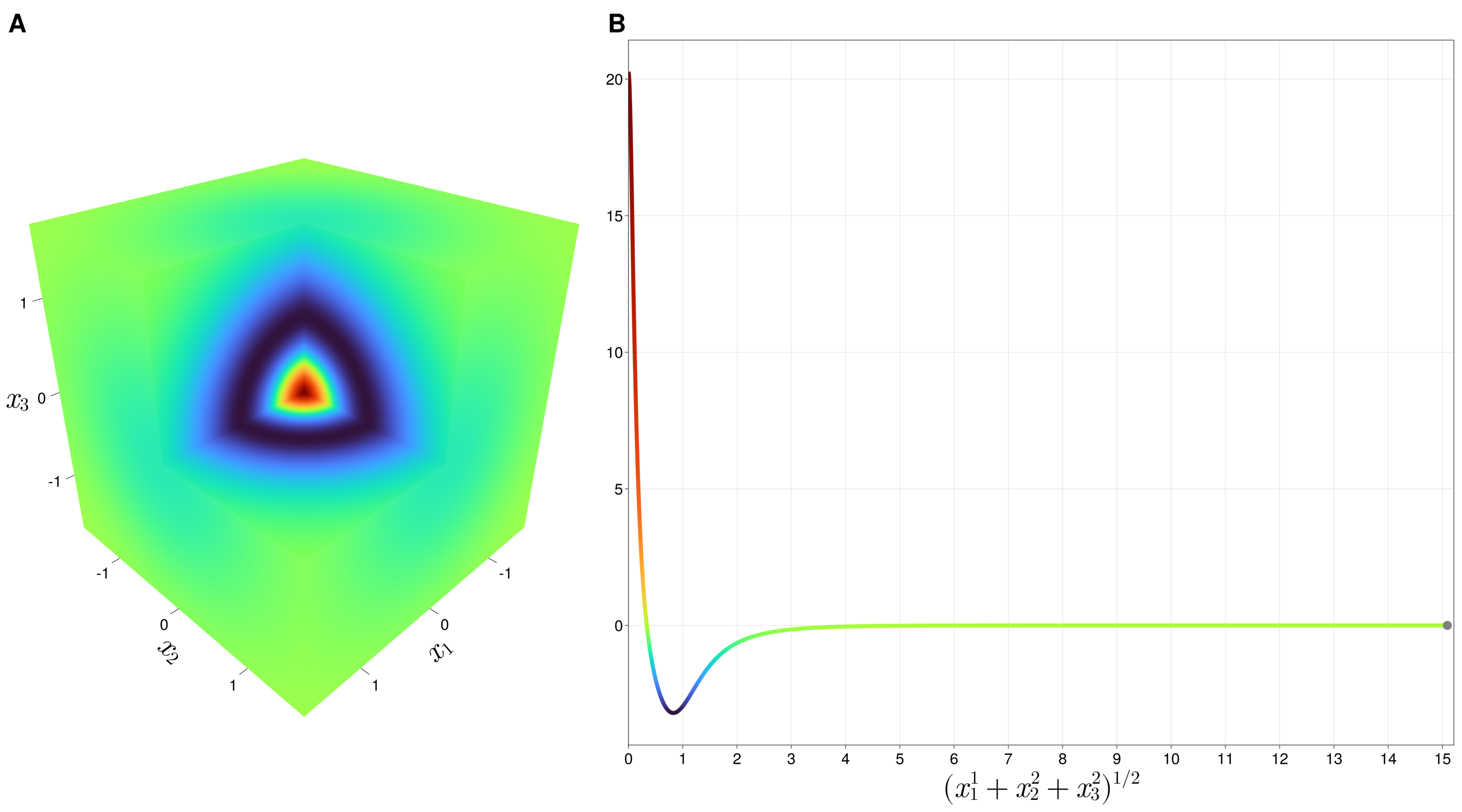}
\caption{}\label{fig:KGb}
\end{subfigure}
\caption{Numerical localized radial stationary solutions of the cubic Klein-Gordon equation \eqref{eq:exampleKG} on $\R^3$ at parameter values $\beta_1 = \beta_2 = 1$. (a) This approximation is proven to be $2.9 \times 10^{-7}$ close (in $C^0$-norm) to a strictly positive localized radial stationary solution. (b) This approximation is proven to be $5.5 \times 10^{-6}$ close (in $C^0$-norm) to a localized radial stationary solution with  one zero. Panel \textbf{A} shows the approximation as a cut-out in $\R^3$. Panel \textbf{B} shows the approximation as a function of the radius $r$. 
}
\label{fig:KG}
\end{figure}

\subsection{Example \ref{ex:exampleSH}: Swift-Hohenberg equation}
\label{sec:2DSH}

Our second example consists in proving localized radial stationary solutions of the Swift-Hohenberg equation as described in Theorem \ref{thm:exampleSH}. Recall that the equation \eqref{eq:exampleSH} under study reads
\[
U_t = -(\beta_4 + \Delta)^2 U + \beta_1 U + \beta_2 U^2 + \beta_3 U^3, \qquad U = U(t, x) \in \R, \quad t \ge 0, \quad x \in \R^2,
\]
with parameters $\beta_1, \beta_2, \beta_3, \beta_4 \in \R$. Observe that if $U$ is a stationary solution of \eqref{eq:exampleSH}, then $(U_1, U_2) = (U, (\beta_4 + \Delta) U)$ solves
\[
\begin{cases}
\Delta U_1 + \beta_4 U_1 - U_2 \hspace{2.8cm}= 0, \\
\Delta U_2 + \beta_4 U_2 - \beta_1 U_1 - \beta_2 U_1^2 - \beta_3 U_1^3 = 0.
\end{cases}
\]
Conversely, if $(U_1, U_2)$ is a solution of the previous system, then $U_1$ is a stationary solution of \eqref{eq:exampleSH}. Hence, the stationary solutions of \eqref{eq:exampleSH} correspond to solutions of \eqref{eq:elliptic} with $q=2$, $d = 2$ and
\[
\mathbf{N}(U_1, U_2) = \begin{pmatrix} \beta_4 U_1 - U_2 \\ \beta_4 U_2 - \beta_1 U_1 - \beta_2 U_1^2 - \beta_3 U_1^3 \end{pmatrix}.
\]
Following the parameter values studied in \cite{MR2475557} (note that in the cited article, the authors consider $\beta_4 \equiv 1$ and use $\mu, \nu, \kappa$ instead of $-\beta_1, \beta_2, -\beta_3$, respectively), for $(\beta_1, \beta_2, \beta_3, \beta_4) = (-\tfrac{3}{5},\sqrt{6},-\tfrac{1}{10},1)$, we compute a numerical approximation of a localized radial stationary solutions oscillating about the constant equilibrium $c = (0, 0)$.

The equation $-D\mathbf{N}(0, 0) \Gamma = \Gamma \Lambda^2$ is satisfied for
\[
\Lambda = \begin{pmatrix}
\sqrt{-\sqrt{\beta_3} - \beta_4} & 0 \\
0 & \sqrt{\sqrt{\beta_3} - \beta_4}
\end{pmatrix}
\quad \text{and} \quad
\Gamma = \begin{pmatrix}
1 & 1 \\
-\sqrt{\beta_3} & \sqrt{\beta_3}
\end{pmatrix},
\]
and it follows that $\hat{\lambda} = \Re(\sqrt{-\sqrt{\beta_3} - \beta_4})$. Moreover, for any $\mu > 0$, the inequality \eqref{eq:psi_hat} is satisfied for
\[
\hat{\psi} = \left(|\beta_2| + \frac{3}{2}|\beta_3| (1+\mathcal{L}_y)\mu \right)(1+\mathcal{L}_y)\mu |
\Lambda^{-1} \Gamma^{-1}|_{\mathscr{B}(\mathbb{C}^2,\mathbb{C}^2)} |\Gamma
|_{\mathscr{B}(\mathbb{C}^2,\mathbb{C}^2)} .
\]

Figure \ref{fig:SH} shows the numerical approximation of a localized radial stationary solution of the Swift-Hohenberg equation \eqref{eq:exampleSH}. We successfully verified Proposition \ref{prop:graph} and Theorem \ref{thm:radii_polynomial} and obtained a $C^0$-error bound for the numerical approximation depicted in Figures \ref{fig:exampleSH} and \ref{fig:SH}: $2.4 \times 10^{-5}$.
For the proof, we chose the Lipschitz constants of the local graph of the center-stable manifold to be $\mathcal{L}_x = 1$ and $\mathcal{L}_y \approx 0.03$. The domain of the local graph is contained in $[0, 0.031] \times \{ y \in \mathbb{C} \, : \, |y| \le 10^{-6} \}^2$.
The proof finished in under 1 minute on a laptop with an M1 CPU, using 8 threads, with 8 GiB of max.\ available memory.

\begin{figure}[!ht]
\centering
\includegraphics[width=0.5\textwidth]{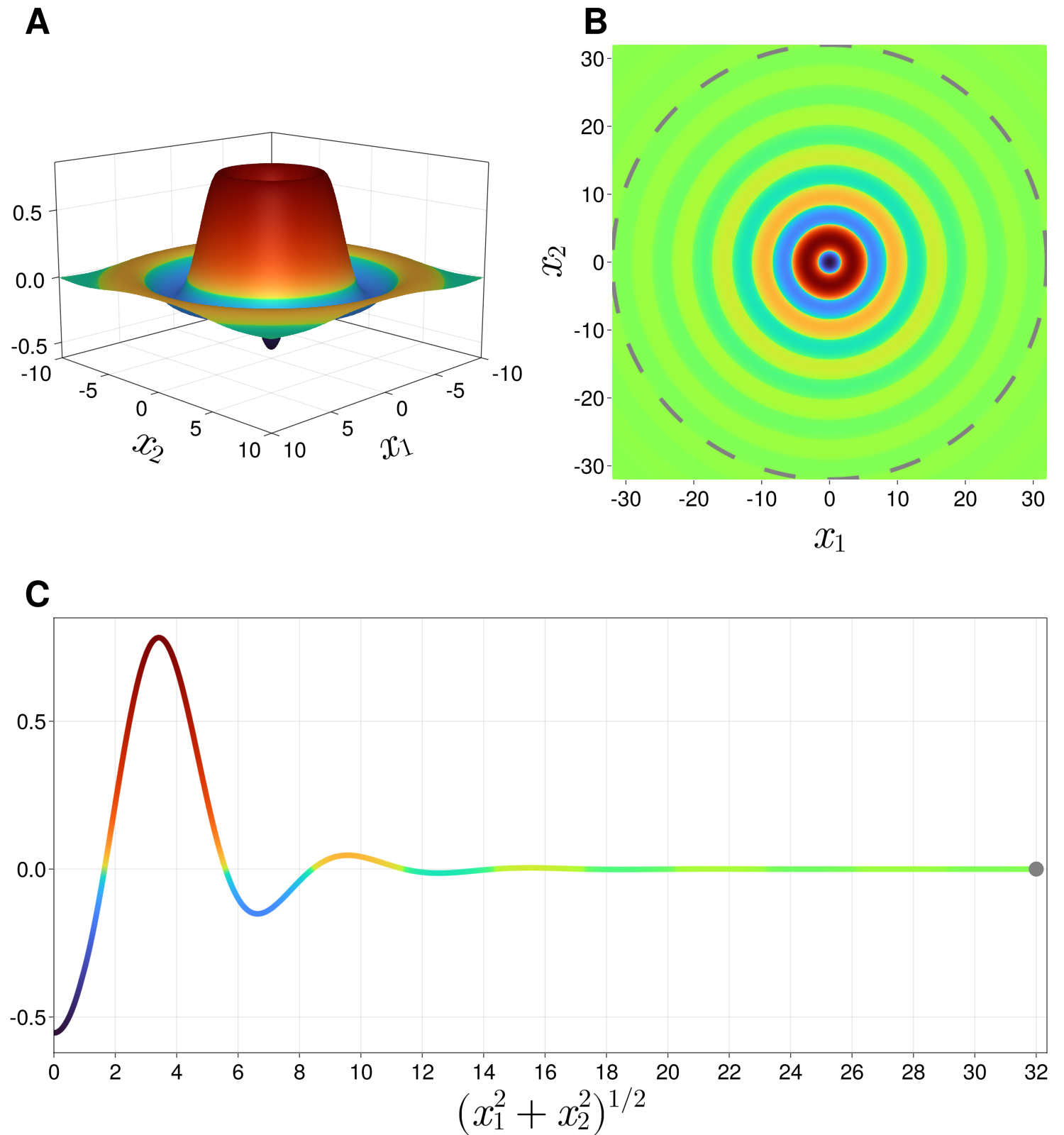}
\caption{Numerical stationary \emph{ring} of the Swift-Hohenberg equation \eqref{eq:exampleSH} on $\mathbb{R}^2$ at parameter values $(\beta_1, \beta_2, \beta_3, \beta_4) = (-\tfrac{3}{5},\sqrt{6},-\tfrac{1}{10},1)$. This approximation is proven to be $2.4 \times 10^{-5}$ close (in $C^0$-norm) to a true localized radial stationary solution. Panel \textbf{A} shows the approximation as a graph over $\mathbb{R}^2$. Panel~\textbf{B} shows its variation on the domain $[- (\ell r_* + L), \ell r_* + L]^2$; the dashed grey circle marks the boundary of the validated local center-stable manifold. Panel \textbf{C} shows the approximation as a function of the radius~$r$.}
\label{fig:SH}
\end{figure}

\subsection{Example \ref{ex:exampleFN}: three-component FitzHugh–Nagumo type equation}
\label{sec:2DFN}

Our third example consists in proving localized radial stationary solutions of the three-component FitzHugh-Nagumo type equation as described in Theorem \ref{thm:exampleFN}. Recall that the equation \eqref{eq:exampleSH} under study reads
\[
\begin{cases}
\hspace{0.2cm} (U_1)_t = \epsilon^2 \Delta U_1 + U_1 - U_1^3 - \epsilon (\beta_1 + \beta_2 U_2 + \beta_3 U_3), \\
\tau (U_2)_t = \Delta U_2 + U_1 - U_2, \\
\theta (U_3)_t = \beta_4^2 \Delta U_3 + U_1 - U_3,
\end{cases}\quad U_i = U_i(t, x) \text{ for } i=1,2,3, \quad t \ge 0, \quad x \in \R^2,
\]
with parameters, $\tau, \theta, \epsilon>0$, $\beta_1, \beta_2, \beta_3 \in \R$ and $\beta_4 > 1$. Hence, the stationary solutions solve \eqref{eq:elliptic} with $q=3$, $d=2$ and
\[
\mathbf{N}(U_1, U_2, U_3) =
\begin{pmatrix}
\epsilon^{-2} (U_1 - U_1^3) - \epsilon^{-1} (\beta_1 + \beta_2 U_2 + \beta_3 U_3) \\
U_1 - U_2 \\
\beta_4^{-2}(U_1 - U_3)
\end{pmatrix}.
\]
Following the parameter values studied in \cite{MR3156797} (note that in the cited article, the authors use $\gamma, \alpha, \beta, D$ instead of $\beta_1, \beta_2, \beta_3, \beta_4$, respectively), for $\epsilon = \tfrac{3}{10}$ and $(\beta_1, \beta_2, \beta_3, \beta_4) = (\tfrac{1}{2},\tfrac{1}{2},1,3)$ we compute a numerical approximation of a localized radial stationary solution about the constant equilibrium $c = (c_*, c_*, c_*)$ where $c_*$ is the smallest root of the cubic polynomial $\upsilon - \upsilon^3 - \epsilon (\beta_1 + \beta_2 \upsilon + \beta_3 \upsilon)$. For the prescribed parameter values, we find $c_* = -20^{-1}(5 + \sqrt{145})$.

To circumvent tedious algebraic manipulations, we solve $- D\mathbf{N}(c) \Gamma = \Gamma \Lambda^2$ via a rigorous numerical method (also based on a Newton-Kantorovich type theorem) whose details is developed in Appendix \ref{appendix:eigenproblem}.
It follows that $\hat{\lambda} \in 0.3687766247191 + [-10^{-14}, 10^{-14}]$. Moreover, for any $\mu > 0$, the inequality \eqref{eq:psi_hat} is satisfied for
\[
\hat{\psi} = \frac{3}{2 \epsilon^2}\bigl((2c_* + (1+\mathcal{L}_y)\mu\bigr)(1+\mathcal{L}_y)\mu |
\Lambda^{-1} \Gamma^{-1}|_{\mathscr{B}(\mathbb{C}^3,\mathbb{C}^3)} |\Gamma
|_{\mathscr{B}(\mathbb{C}^3,\mathbb{C}^3)}.
\]

Figure \ref{fig:FN} shows the numerical approximation of a localized radial stationary solution of the three-component FitzHugh-Nagumo type equation \eqref{eq:exampleFN}. We successfully verified Proposition \ref{prop:graph} and Theorem~\ref{thm:radii_polynomial} and obtained a $C^0$-error bound for the numerical approximation depicted in Figure \ref{fig:exampleFN} and \ref{fig:FN}: $9.8 \times 10^{-7}$.
For the proof, we chose the Lipschitz constants of the local graph of the center-stable manifold to be $\mathcal{L}_x = 1$ and $\mathcal{L}_y \approx 0.02$. The domain of the local graph is contained in $[0, 0.023] \times [-10^{-7}, 10^{-7}]^3$.
Similarly to the proof of the solution with $m=1$ of the cubic Klein-Gordon equation, there is a sizeable gap between the numerical truncation order and the padding order. For the proof of the solution depicted in Figure \ref{fig:FN}, we used $n_{\mathscr{T},\textnormal{num}} = 60$, $n_{\mathscr{T},\textnormal{pad}} = 100$ and $n_{\mathscr{C},\textnormal{num}} = 250$, $n_{\mathscr{C},\textnormal{pad}} = 10,000$ (finished in under 6 hours on a server with an Intel E7-4850 v4 Broadwell CPU, using 32 threads, with 2510 GiB of max.\ available RAM).

Let us now point out the difficult aspects of this computer-assisted proof. The success of Theorem~\ref{thm:radii_polynomial} depends on the roots of the left-hand-side of \eqref{eq:radii_constraint1}, which are real if
\[
(1 - Z_1 - |A(\Gamma, \Gamma \Lambda, 0, 0)|_{\mathscr{B}(\mathbb{C}^q, X)} \mathcal{L}_y)^2 \ge 2 Z_2 (Y + |A(\Gamma, \Gamma \Lambda, 0, 0)|_{\mathscr{B}(\mathbb{C}^q, X)} \mathcal{L}_y |\bar{\eta}|_{\mathbb{C}^q}).
\]
In particular, both the product $Z_2 Y$
and the product $Z_2  \mathcal{L}_y |\bar{\eta}|_{\mathbb{C}^q}$ needs to be sufficiently small.
As explained at the beginning of Section \ref{sec:applications}, successive Newton iterations allow the $Y$ bound to be as small as we need (provided the Taylor and Chebyshev series expansions include enough modes).
However, the values of $|\bar{\eta}|_{\mathbb{C}^q}$ and $\mathcal{L}_y$ are controlled by $L$: the larger the latter, the smaller the former. In turn, this worsens the $Z_1$ bound such that we need to take a larger $n_\mathscr{C}$.

Unfortunately, for our choice of parameters, the real part of the slowest eigenvalue $\hat{\lambda}$ is fairly small which impedes the asymptotic convergence of the localized radial solution to the constant equilibrium~$c$. In other words, $L$ must be substantially increased to overcome $Z_2$. Then, large $n_\mathscr{C}$ needs be chosen in order to attain $Z_1<1$, which is necessary for inequality~\eqref{eq:radii_constraint2} to hold.
Once again, we believe that combining the domain decomposition method developed in this paper with \cite{MR4292534} should alleviate this difficulty.

\begin{figure}[!ht]
\centering
\begin{subfigure}[b]{0.3\textwidth}
\centering
\includegraphics[width=\textwidth]{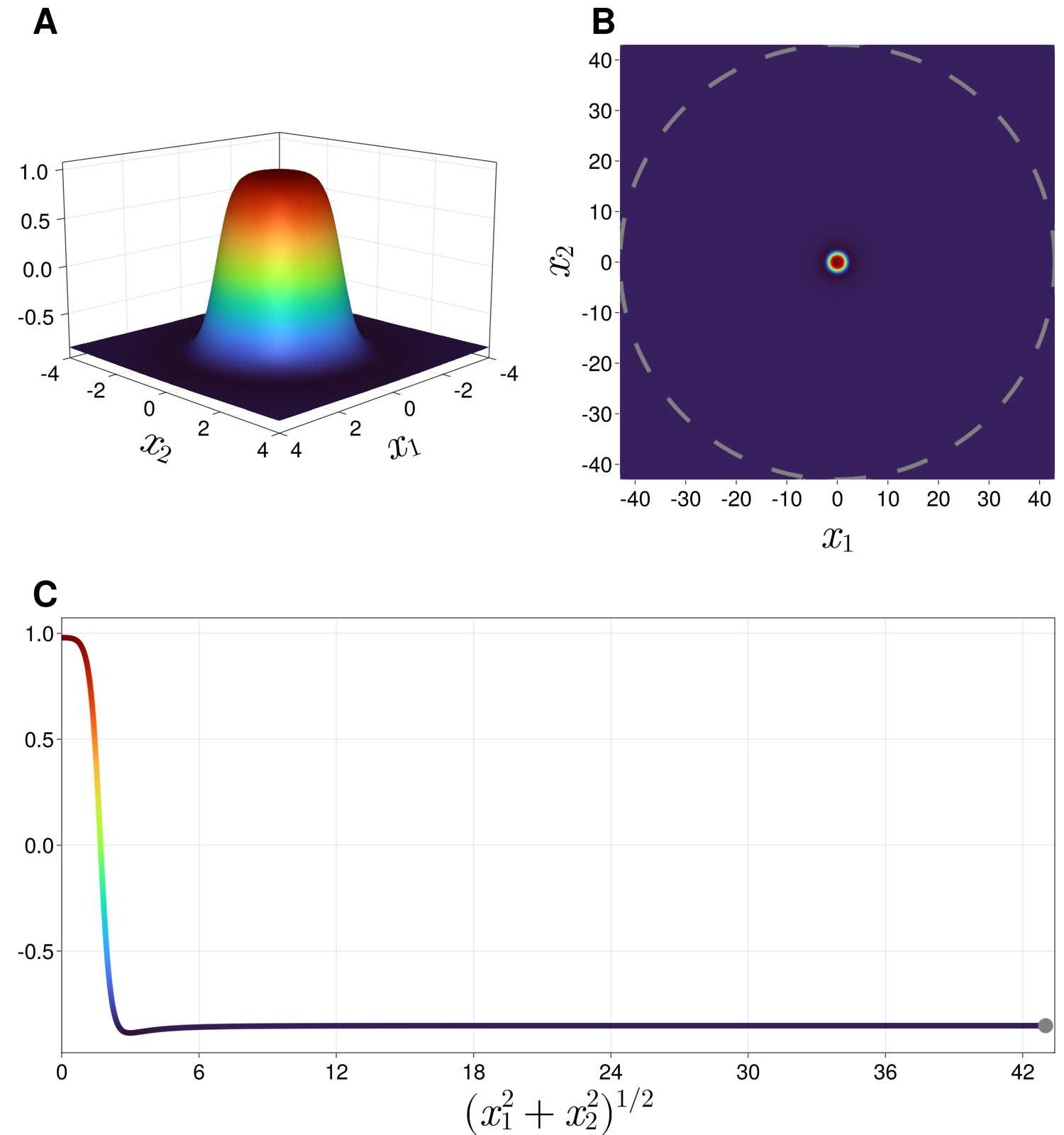}
\caption{}
\end{subfigure}
\hfill
\begin{subfigure}[b]{0.3\textwidth}
\centering
\includegraphics[width=\textwidth]{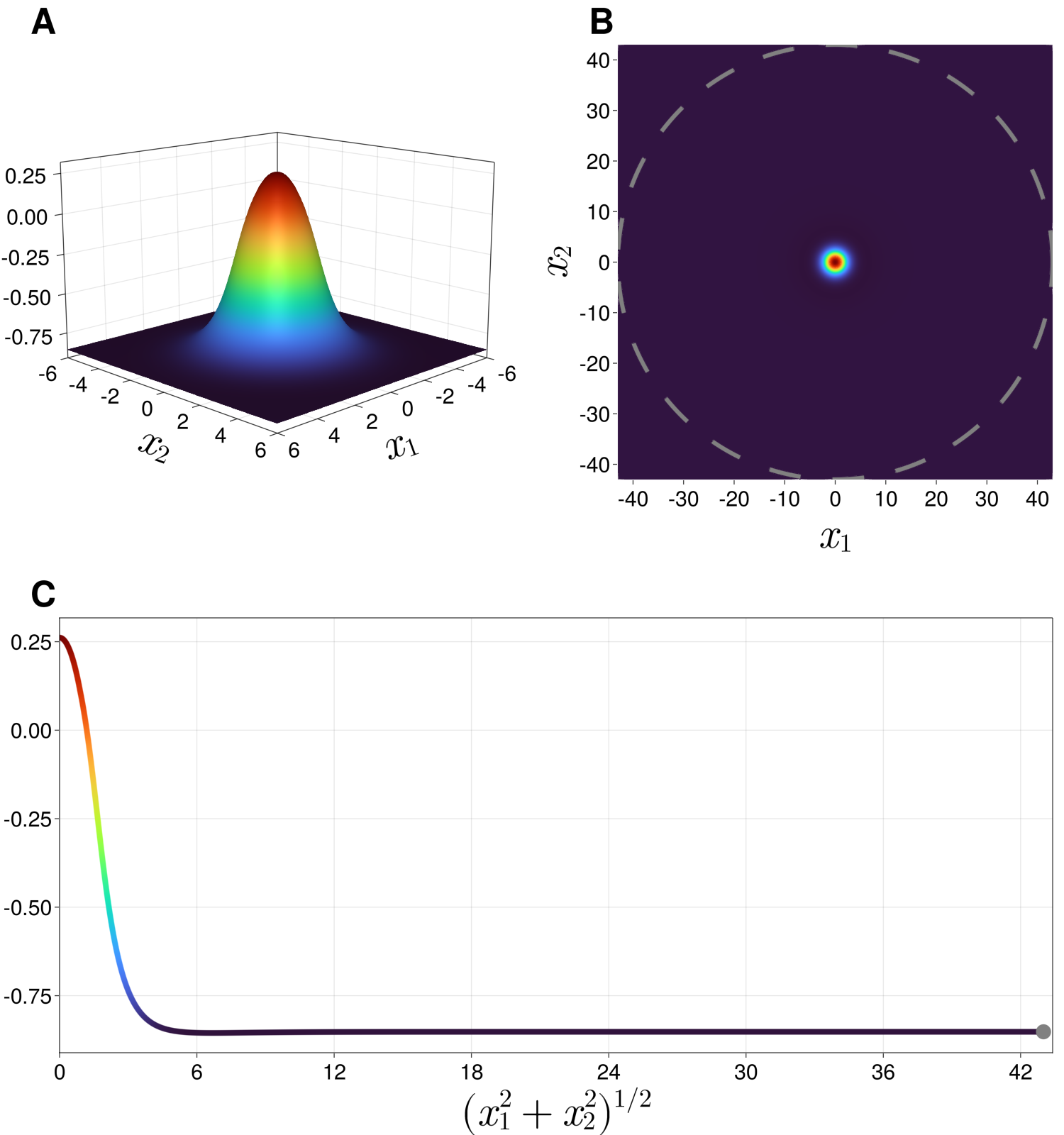}
\caption{}
\end{subfigure}
\hfill
\begin{subfigure}[b]{0.3\textwidth}
\centering
\includegraphics[width=\textwidth]{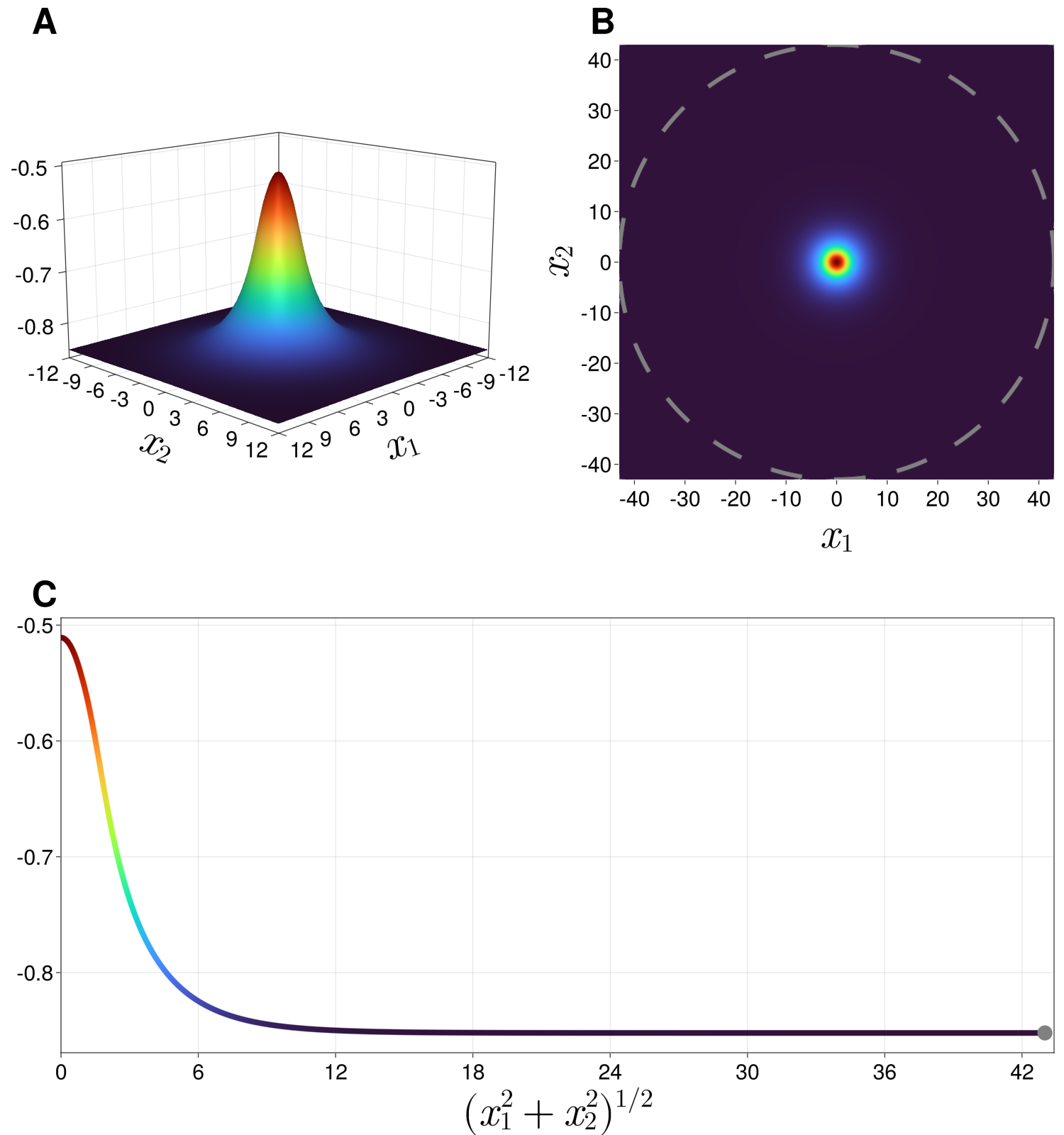}
\caption{}
\end{subfigure}
\caption{Numerical stationary planar radial \emph{spot} of the three-component FitzHugh-Nagumo type equation \eqref{eq:exampleFN} on $\mathbb{R}^2$ at parameter values $\epsilon = \tfrac{3}{10}$ and $(\beta_1, \beta_2, \beta_3, \beta_4) = (\tfrac{1}{2},\tfrac{1}{2},1,3)$; the three components are represented in the subfigures (a), (b) and (c), respectively. This approximation is proven to be $9.8 \times 10^{-7}$ close (in $C^0$-norm) to a true localized radial stationary solution. Panel \textbf{A} shows the approximation as graphs over $\mathbb{R}^2$. Panel~\textbf{B} shows its variation on the domain $[- (\ell r_* + L), \ell r_* + L]^2$; the dashed grey circle marks the boundary of the validated local center-stable manifold. Panel \textbf{C} shows the approximation as a function of the radius~$r$.}
\label{fig:FN}
\end{figure}


\appendix
 
\section{The Newton-Kantorovich bounds} 
\label{appendix:YZ1Z2}

In this appendix we present the proofs of the inequalities~\eqref{eq:Y}, \eqref{eq:Z_1} and \eqref{eq:Z_2} in the proof of Theorem~\ref{thm:radii_polynomial}. We will use the notation from that proof without reintroducing it here.

Firstly, by the triangle inequality,
\[
|T(\bx; 0) - \bx|_X = |\mathcal{A} F(\bx; 0)|_X \le | A \pi^{n_\mathscr{T}, n_\mathscr{C}} F (\bx; 0) |_X + | \mathcal{A}_\infty F(\bx; 0) |_X.
\]
\noindent Since $\pi^{\infty(n_\mathscr{T}), \infty(n_\mathscr{C})} \bx = 0$, we have
\[
\mathcal{A}_\infty \pi^{\infty(n_\mathscr{T}), \infty(n_\mathscr{C})} F(\bx; 0) = (0, 0, g_\infty, h_\infty),
\]
\noindent where
\begin{align*}
\{g_\infty\}_n &\bydef
\begin{cases}
0, & n \le n_\mathscr{T}, \\
\displaystyle \frac{\ell^2 \{\mathbf{N}(\bar{v})\}_{n-2}}{n (n + d - 2)}, & n > n_\mathscr{T},
\end{cases} \\
\{h_\infty\}_n &\bydef
\begin{cases}
0, & n \le n_\mathscr{C}, \\
\displaystyle \frac{L}{2} \frac{\{f(\bar{w})\}_{n-1} - \{f(\bar{w})\}_{n+1}}{2n}, & n > n_\mathscr{C}.
\end{cases}
\end{align*}
We infer that 
\begin{align*}
|\mathcal{A}_\infty F(\bx; 0)|_X
&= \max(|g|_{\mathscr{T}^q}, |h|_{\mathscr{C}^{1+2q}_\nu}) \\
&\le \max\left(
\frac{\ell^2 | \pi^{\infty(n_\mathscr{T}-2)} \mathbf{N}(\bar{v}) |_{\mathscr{T}^q}}{(n_\mathscr{T}+1)(n_\mathscr{T}+d-1)},
\frac{L(\nu + \nu^{-1})| \pi^{\infty(n_\mathscr{C}-1)} f(\bar{w}) |_{\mathscr{C}_\nu^{1+2q}}}{4(n_\mathscr{C}+1)}
\right).
\end{align*}
By definition of $Y$, this proves \eqref{eq:Y}.

Secondly, by the triangle inequality,
\begin{align*}
|D T(\bx; 0)|_{\mathscr{B}(X, X)} 
&= |I - \mathcal{A} D F(\bx; 0)|_{\mathscr{B}(X, X)} \\
&= |I - (A\pi^{n_\mathscr{T}, n_\mathscr{C}} + \mathcal{A}_\infty) D F(\bx; 0)|_{\mathscr{B}(X, X)} \\
&\le |I - A\pi^{n_\mathscr{T}, n_\mathscr{C}} D F(\bx; 0) \pi^{n_\mathscr{T}, \order n_\mathscr{C} + 1} - \mathcal{A}_\infty D F(\bx; 0) \pi^{\infty(n_\mathscr{T}), \infty(n_\mathscr{C})}|_{\mathscr{B}(X, X)}  \\
&\qquad + |\mathcal{A}_\infty D F(\bx; 0) \pi^{n_\mathscr{T}, n_\mathscr{C}}|_{\mathscr{B}(X, X)} + |A\pi^{n_\mathscr{T}, n_\mathscr{C}} D F(\bx; 0) \pi^{\infty(n_\mathscr{T}), \infty(\order n_\mathscr{C} + 1)}|_{\mathscr{B}(X, X)}.
\end{align*}
We then have
\[
\mathcal{A}_\infty \pi^{\infty(n_\mathscr{T}), \infty(n_\mathscr{C})} D F(\bx; 0) \pi^{\infty(n_\mathscr{T}), \infty(n_\mathscr{C})} \chi =
\begin{pmatrix}
0 \\
0 \\
\pi^{\infty(n_\mathscr{T})} v \\
\pi^{\infty(n_\mathscr{C})} w
\end{pmatrix} = \pi^{\infty(n_\mathscr{T}), \infty(n_\mathscr{C})} \chi,
\]
for all $\chi = (\eta, \phi, v, w) \in X$, which implies
\begin{align*}
&|I - A\pi^{n_\mathscr{T}, n_\mathscr{C}} D F(\bx; 0) \pi^{n_\mathscr{T}, \order n_\mathscr{C} + 1} - \mathcal{A}_\infty D F(\bx; 0) \pi^{\infty(n_\mathscr{T}), \infty(n_\mathscr{C})}|_{\mathscr{B}(X, X)}
\\
&\hspace{7.5cm}=
|\pi^{n_\mathscr{T}, n_\mathscr{C}} - A\pi^{n_\mathscr{T}, n_\mathscr{C}} D F(\bx; 0) \pi^{n_\mathscr{T}, \order n_\mathscr{C} + 1}|_{\mathscr{B}(X, X)}.
\end{align*}
Moreover,
\[
\mathcal{A}_\infty D F(\bx; 0) \pi^{n_\mathscr{T}, n_\mathscr{C}} \chi = (0, 0, g_\infty^{n_\mathscr{T}}, h_\infty^{n_\mathscr{C}}), \qquad \text{for all } \chi = (\eta, \phi, v, w) \in X,
\]
\noindent where
\begin{align*}
\{g_\infty^{n_\mathscr{T}}\}_n &\bydef
\begin{cases}
0, & n \le n_\mathscr{T}, \\
\displaystyle \frac{\ell^2 \{D \mathbf{N}(\bar{v}) \pi^{n_\mathscr{T}} v\}_{n-2}}{n (n + d - 2)}, & n > n_\mathscr{T},
\end{cases} \\
\{h_\infty^{n_\mathscr{C}}\}_n &\bydef
\begin{cases}
0, & n \le n_\mathscr{C}, \\
\displaystyle \frac{L}{2} \frac{\{Df(\bar{w}) \pi^{n_\mathscr{C}} w\}_{n-1} - \{Df(\bar{w}) \pi^{n_\mathscr{C}} w\}_{n+1}}{2n}, & n > n_\mathscr{C}.
\end{cases}
\end{align*}
We infer that
\begin{align*}
|\mathcal{A}_\infty D F(\bx; 0) \pi^{n_\mathscr{T}, n_\mathscr{C}}|_{\mathscr{B}(X, X)} \le
\max&\left(\frac{\ell^2 |D\mathbf{N}(\bar{v})|_{\mathscr{B}(\mathscr{T}^q, \mathscr{T}^q)}}{(n_\mathscr{T}+1)(n_\mathscr{T}+d-1)},
\frac{L(\nu + \nu^{-1}) | Df(\bar{w})|_{\mathscr{B}(\mathscr{C}_\nu^{1+2q}, \mathscr{C}_\nu^{1+2q})}}{4(n_\mathscr{C}+1)}\right).
\end{align*}
\noindent Furthermore,
\begin{align*}
&A\pi^{n_\mathscr{T}, n_\mathscr{C}} D F(\bx; 0) \pi^{\infty(n_\mathscr{T}), \infty(\order n_\mathscr{C} + 1)} \chi =
A \left( 2\sum_{n > \order n_\mathscr{C} + 1 } \{w_2\}_n \, , \, 2\sum_{n > \order n_\mathscr{C} + 1 } \{w_3\}_n \, , \,  0 \, , \, \tilde{h} \right),
\end{align*}
for all $\chi = (\eta, \phi, v, w) \in X$, where
\[
\{\tilde{h}\}_n =
\begin{cases}
\displaystyle
\begin{pmatrix}
0 \\ \sum_{m > n_\mathscr{T}} \{v\}_m r_*^m \\ \ell^{-1} \sum_{m > n_\mathscr{T}} m \{v\}_m r_*^m
\end{pmatrix} - L\sum_{m > n_\mathscr{C} + 1} \frac{(-1)^m}{m^2 -1} \{Df(\bar{w}) \pi^{\infty(\order n_\mathscr{C}+1)} w\}_m, & n = 0, \\
0, & n \ge 1,
\end{cases}
\]
since, according to \eqref{eq:discrete_conv}, $\{Df(\bar{w}) \pi^{\infty(\order n_\mathscr{C}+1)} w\}_n = 0$ for all $n \le n_\mathscr{C} + 1$. 
As it was assumed that $r_* \in (0, e^{-1/(n_\mathscr{T} + 1)}]$, we have that $n r_*^n$ is a decreasing sequence for all $n > n_\mathscr{T}$. 
It follows that
\begin{align*}
&|A\pi^{n_\mathscr{T}, n_\mathscr{C}} D F(\bx; 0) \pi^{\infty(n_\mathscr{T}), \infty(\order n_\mathscr{C} + 1)} |_{\mathscr{B}(X, X)} \\
&\hspace*{2cm}\le  | A |_{\mathscr{B}(X, X)} \max \left( \frac{2}{\nu^{\order n_\mathscr{C}+2}}, r_*^{n_\mathscr{T} + 1}\max(1, \ell^{-1}(n_\mathscr{T} + 1)) + \frac{L| Df(\bar{w})|_{\mathscr{B}(\mathscr{C}_\nu^{1+2q}, \mathscr{C}_\nu^{1+2q})}}{\nu^{n_\mathscr{C} + 2}((n_\mathscr{C} + 2)^2 - 1)} \right).
\end{align*}
\noindent By definition of $Z_1$, this proves \eqref{eq:Z_1}.

Thirdly, by the triangle inequality,
\begin{align*}
\sup_{\chi \in \textnormal{cl}(B_\varrho(\bx))} | D^2 T(\chi; 0) |_{\mathscr{B}(X, \mathscr{B}(X, X))} &= \sup_{\chi \in \textnormal{cl}(B_\varrho(\bx))} | \mathcal{A} D^2 F(\chi; 0) |_{\mathscr{B}(X, \mathscr{B}(X, X))} \\
&\le |\mathcal{A}|_{\mathscr{B}(X, X)} \sup_{\chi \in \textnormal{cl}(B_\varrho(\bx))} |D^2 F(\chi; 0)|_{\mathscr{B}(X, \mathscr{B}(X, X))}.
\end{align*}
\noindent We have
\[
|\mathcal{A}|_{\mathscr{B}(X, X)} \le | A |_{\mathscr{B}(X, X)} + \max\left(\frac{1}{(n_\mathscr{T}+1)(n_\mathscr{T}+d-1)}, 1\right) = | A |_{\mathscr{B}(X, X)} + 1.
\]
Additionally,
\[
[D^2 F(\chi; 0)] (\chi', \chi'') = (0, 0, \ell^2 [D^2 \mathbf{N}(v)](v', v''), \hat{h}),
\]
\noindent for all $\chi = (\eta, \phi, v, w), \chi' = (\eta', \phi', v', w'), \chi'' = (\eta'', \phi'', v'', w'') \in X$, where
\[
\begin{aligned}
&\{ \hat{h} \}_n \bydef \\
&\hspace{0.5cm}
\begin{cases}
\displaystyle
\frac{L}{2} \Bigl(\{[D^2 f(w)](w', w'')\}_0 - \frac{1}{2}\{[D^2 f(w)](w', w'')\}_1 - 2 \sum_{m \ge 2} \frac{(-1)^m}{m^2 -1} \{[D^2 f(w)](w', w'')\}_m \Bigr), & n = 0,
\\
\displaystyle
\frac{L}{2} \frac{\{[D^2 f(w)](w', w'')\}_{n-1} - \{[D^2 f(w)](w', w'')\}_{n+1}}{2n}, & n \ge 1.
\end{cases}
\end{aligned}
\]
Subsequently, by extensively applying the triangle inequality and the Banach algebra property, we obtain
\begin{alignat*}{2}
&|[D^2 \mathbf{N}(v)](v', v'')|_{\mathscr{T}^q} &&= \max_{i=1,\dots,q} \sum_{j,k = 1}^q |\frac{\partial^2}{\partial v_j \partial v_k} \mathbf{N}_i(v) * v'_j * v''_k|_\mathscr{T} \\
& &&\le \max_{i=1,\dots,q} \sum_{j,k = 1}^q \frac{\partial^2}{\partial v_j \partial v_k} \mathbf{N}_{\textnormal{abs},i}(|v_1|_\mathscr{T}, \dots, |v_q|_\mathscr{T}) |v'_j|_\mathscr{T} |v''_k|_\mathscr{T}, \\
&|[D^2 f(w)](w', w'')|_{\mathscr{C}^{1+2q}_\nu} &&= \max_{i=1,\dots,q} \sum_{j,k = 1}^{1+2q} |\frac{\partial^2}{\partial w_j \partial w_k} f_i(w) * w'_j * w''_k|_{\mathscr{C}_\nu} \\
& &&\le \max_{i=1,\dots,q} \sum_{j,k = 1}^q \frac{\partial^2}{\partial w_j \partial w_k} f_{\textnormal{abs},i}(|w_1|_{\mathscr{C}_\nu}, \dots, |w_{1+2q}|_{\mathscr{C}_\nu}) |w'_j|_{\mathscr{C}_\nu} |w''_k|_{\mathscr{C}_\nu}.
\end{alignat*}
Then, introducing $\zeta \bydef [D^2 f(w)](w', w'')$ for convenience, for any $i=1, \dots, 1+2q$ we have
\begin{align*}
|\hat{h}_i|_{\mathscr{C}_\nu}
&= \frac{L}{2} \Bigl( |\{\zeta_i\}_0 - \frac{1}{2}\{\zeta_i\}_1 - 2 \sum_{n \ge 2} \frac{(-1)^n}{n^2 -1} \{\zeta_i\}_n | + \sum_{n \ge 1} \frac{|\{\zeta_i\}_{n-1} - \{\zeta_i\}_{n+1}|}{n} \nu^n
\Bigr)
\\
&= \frac{L}{2} \Bigl( |\{\zeta_i\}_0 - \frac{1}{2}\{\zeta_i\}_1 - 2 \sum_{n \ge 2} \frac{(-1)^n}{n^2 -1} \{\zeta_i\}_n | + \nu \sum_{n \ge 0} \frac{|\{\zeta_i\}_n - \{\zeta_i\}_{n+2}|}{n+1} \nu^n \Bigr)
\\
&\le \frac{L}{2} \Bigl( (1 + \nu)|\{\zeta_i\}_0| + \frac{1 + \nu^2}{2}|\{\zeta_i\}_1| + 2\sum_{n \ge 2} |\{\zeta_i\}_n| \Bigl(\frac{1}{n^2 -1} + \frac{\nu^{n+1}}{n+1}\Bigr) \Bigr)
\\
&= \frac{L(1 + \nu)}{2} \Bigl( |\{\zeta_i\}_0| + \frac{1 + \nu^2}{2(1 + \nu)}|\{\zeta_i\}_1| + 2 \sum_{n \ge 2} \frac{(n+1)\nu^{-n}+(n^2 -1)\nu}{(n+1)(n^2 -1)(1 + \nu)} |\{\zeta_i\}_n| \nu^n \Bigr)
\\
&\le \frac{L(1 + \nu)}{2} \Bigl( |\{\zeta_i\}_0| + 2 |\{\zeta_i\}_1| \nu  + 2 \sum_{n \ge 2} |\{\zeta_i\}_n| \nu^n \Bigr)
\\
&= \frac{L(1 + \nu)}{2} |\zeta_i|_{\mathscr{C}_\nu}.
\end{align*}
It follows that (with the infinity norm on both $\mathbb{C}^q$ and $\mathbb{C}^{1+2q}$)
\begin{align*}
\sup_{\chi \in \textnormal{cl}(B_\varrho(\bx))} |D^2 F(\chi; 0)|_{\mathscr{B}(X, \mathscr{B}(X, X))}
\le
\max\Bigg(&\ell^2 |D^2 \mathbf{N}_\textnormal{abs}(|\bar{v}_1|_\mathscr{T}+\varrho, \dots, |\bar{v}_q|_\mathscr{T}+\varrho)|_{\mathscr{B}(\mathbb{C}^{q^2}, \mathbb{C}^q)},
\\
&{\frac{L(1 + \nu)}{2}} |D^2 f_\textnormal{abs}(|\bar{w}_1|_{\mathscr{C}_\nu}+\varrho, \dots, |\bar{w}_{1+2q}|_{\mathscr{C}_\nu}+\varrho)|_{\mathscr{B}(\mathbb{C}^{(1+2q)^2}, \mathbb{C}^{1+2q})}\Bigg).
\end{align*}
By definition of $Z_2$, this proves \eqref{eq:Z_2}.

\section{Rigorous computation of eigenvalues and eigenvectors for FitzHugh-Nagumo} \label{appendix:eigenproblem}

For the three-component FitzHugh-Nagumo type equation \eqref{eq:exampleFN}, we show how to rigorously compute $\lambda \in \mathbb{C}$ and $g \in \mathbb{C}^3$ satisfying
\begin{equation}\label{eq:eigenproblem_fhn}
D\mathbf{N}(c_*, c_*, c_*) g + g \lambda^2 = 0,
\end{equation}
where
\[
D\mathbf{N}(c_*, c_*, c_*) =
\begin{pmatrix}
\epsilon^{-2}(1 - 3c_*^2) & -\epsilon^{-1} \beta_2 & -\epsilon^{-1} \beta_3 \\
1 & -1 & 0 \\
\beta_4^{-2} & 0 & -\beta_4^{-2}
\end{pmatrix}.
\]
While there exist efficient libraries to rigorously compute eigenvalues and eigenvectors, we include here for completeness a strategy relying on a Newton-Kantorovich argument very similar (and far simpler) in nature to the proof of Theorem \ref{thm:radii_polynomial}.

It is straightforward to verify that if \eqref{eq:eigenproblem_fhn} holds, then the first component $g_1$ of $g$ cannot vanish. A pair $(\lambda, g/g_1) \in \mathbb{C} \times \mathbb{C}^3$ solving \eqref{eq:eigenproblem_fhn} forms an isolated zero of the mapping $F_\textnormal{eig} : \mathbb{C} \times \mathbb{C}^3 \to \mathbb{C} \times \mathbb{C}^3$ given by
\[
F_\textnormal{eig} (\lambda, g) \bydef
\begin{pmatrix}
g_1 - 1 \\
D\mathbf{N}(c_*, c_*, c_*) g + g \lambda ^ 2
\end{pmatrix}, \qquad \text{for all } (\lambda, g) \in \mathbb{C} \times \mathbb{C}^3.
\]
Fix $\varrho_\textnormal{eig} > 0$. Let $\bx_\textnormal{eig} = (\bar{\lambda}, \bar{g})$ be a numerical zero of $F_\textnormal{eig}$ and $A_\textnormal{eig}$ a numerical inverse of $DF_\textnormal{eig} (\bx_\textnormal{eig})$. Define $Y_\textnormal{eig} \bydef |A_\textnormal{eig} F_\textnormal{eig} (\bx_\textnormal{eig})|_{\mathbb{C} \times \mathbb{C}^3}$ and $Z_\textnormal{eig} \bydef \sup_{\chi_\textnormal{eig} \in \textnormal{cl}(B_{\varrho_\textnormal{eig}}(\bx_\textnormal{eig}))}|I - A_\textnormal{eig} DF_\textnormal{eig} (\chi_\textnormal{eig})|_{\mathscr{B}(\mathbb{C} \times \mathbb{C}^3, \mathbb{C} \times \mathbb{C}^3)}$. Observe that $Y_\textnormal{eig}$ and $Z_\textnormal{eig}$ can be rigorously computed using interval arithmetic (see e.g. \cite{IntervalArithmeticJulia}).

If $Z_\textnormal{eig} < 1$ and $\bar{\rho}_\textnormal{eig} \bydef Y_\textnormal{eig}/(1 - Z_\textnormal{eig}) \in [0, \varrho_\textnormal{eig}]$, then the operator $T_\textnormal{eig} \bydef I - A_\textnormal{eig} F_\textnormal{eig}$ is a contraction in $\textnormal{cl}(B_{\varrho_\textnormal{eig}}(\bx_\textnormal{eig}))$. Consequently, $\bar{\rho}_\textnormal{eig}$ is an a posteriori error bound on $\bx_\textnormal{eig}$.

\bibliographystyle{unsrt}
\bibliography{papers}

\begin{thebibliography}{10}

\bibitem{MR634248}
B.~Gidas, Wei~Ming Ni, and L.~Nirenberg.
\newblock Symmetry of positive solutions of nonlinear elliptic equations in {${\bf R}^{n}$}.
\newblock In {\em Mathematical analysis and applications, {P}art {A}}, volume~7 of {\em Adv. in Math. Suppl. Stud.}, pages 369--402. Academic Press, New York-London, 1981.

\bibitem{MR982351}
Luis~A. Caffarelli, Basilis Gidas, and Joel Spruck.
\newblock Asymptotic symmetry and local behavior of semilinear elliptic equations with critical {S}obolev growth.
\newblock {\em Comm. Pure Appl. Math.}, 42(3):271--297, 1989.

\bibitem{MR1121147}
Wen~Xiong Chen and Congming Li.
\newblock Classification of solutions of some nonlinear elliptic equations.
\newblock {\em Duke Math. J.}, 63(3):615--622, 1991.

\bibitem{MR454365}
Walter~A. Strauss.
\newblock Existence of solitary waves in higher dimensions.
\newblock {\em Comm. Math. Phys.}, 55(2):149--162, 1977.

\bibitem{MR695535}
H.~Berestycki and P.-L. Lions.
\newblock Nonlinear scalar field equations. {I}. {E}xistence of a ground state.
\newblock {\em Arch. Rational Mech. Anal.}, 82(4):313--345, 1983.

\bibitem{MR695536}
H.~Berestycki and P.-L. Lions.
\newblock Nonlinear scalar field equations. {II}. {E}xistence of infinitely many solutions.
\newblock {\em Arch. Rational Mech. Anal.}, 82(4):347--375, 1983.

\bibitem{MR165176}
Zeev Nehari.
\newblock On a nonlinear differential equation arising in nuclear physics.
\newblock {\em Proc. Roy. Irish Acad. Sect. A}, 62:117--135 (1963), 1963.

\bibitem{MR846391}
C.~Jones and T.~K\"{u}pper.
\newblock On the infinitely many solutions of a semilinear elliptic equation.
\newblock {\em SIAM J. Math. Anal.}, 17(4):803--835, 1986.

\bibitem{MR931016}
C.~K. R.~T. Jones, T.~K\"{u}pper, and H.~Plakties.
\newblock A shooting argument with oscillation for semilinear elliptic radially symmetric equations.
\newblock {\em Proc. Roy. Soc. Edinburgh Sect. A}, 108(1-2):165--180, 1988.

\bibitem{MR1033193}
Kevin McLeod, W.~C. Troy, and F.~B. Weissler.
\newblock Radial solutions of {$\Delta u+f(u)=0$} with prescribed numbers of zeros.
\newblock {\em J. Differential Equations}, 83(2):368--378, 1990.

\bibitem{MR1158936}
Yi~Li and Wei-Ming Ni.
\newblock On the asymptotic behavior and radial symmetry of positive solutions of semilinear elliptic equations in {${\bf R}^n$}. {II}. {R}adial symmetry.
\newblock {\em Arch. Rational Mech. Anal.}, 118(3):223--243, 1992.

\bibitem{MR886933}
Kevin McLeod and James Serrin.
\newblock Uniqueness of positive radial solutions of {$\Delta u+f(u)=0$} in {${\bf R}^n$}.
\newblock {\em Arch. Rational Mech. Anal.}, 99(2):115--145, 1987.

\bibitem{MR333489}
Charles~V. Coffman.
\newblock Uniqueness of the ground state solution for {$\Delta u-u+u^{3}=0$} and a variational characterization of other solutions.
\newblock {\em Arch. Rational Mech. Anal.}, 46:81--95, 1972.

\bibitem{MR682268}
L.~A. Peletier and James Serrin.
\newblock Uniqueness of positive solutions of semilinear equations in {${\bf R}^{n}$}.
\newblock {\em Arch. Rational Mech. Anal.}, 81(2):181--197, 1983.

\bibitem{MR829369}
L.~A. Peletier and James Serrin.
\newblock Uniqueness of nonnegative solutions of semilinear equations in {${\bf R}^n$}.
\newblock {\em J. Differential Equations}, 61(3):380--397, 1986.

\bibitem{MR1755067}
J\'{e}r\^{o}me Busca and Boyan Sirakov.
\newblock Symmetry results for semilinear elliptic systems in the whole space.
\newblock {\em J. Differential Equations}, 163(1):41--56, 2000.

\bibitem{MR1765572}
Alan~V. Lair and Aihua~W. Wood.
\newblock Existence of entire large positive solutions of semilinear elliptic systems.
\newblock {\em J. Differential Equations}, 164(2):380--394, 2000.

\bibitem{MR1363170}
Josephus Hulshof and Robertus C. A.~M. van~der Vorst.
\newblock Asymptotic behaviour of ground states.
\newblock {\em Proc. Amer. Math. Soc.}, 124(8):2423--2431, 1996.

\bibitem{MR1645728}
James Serrin and Henghui Zou.
\newblock Existence of positive solutions of the {L}ane-{E}mden system.
\newblock {\em Atti Sem. Mat. Fis. Univ. Modena}, 46(suppl.):369--380, 1998.
\newblock Dedicated to Prof. C. Vinti (Italian) (Perugia, 1996).

\bibitem{MR1620581}
James Serrin and Henghui Zou.
\newblock Existence of positive entire solutions of elliptic {H}amiltonian systems.
\newblock {\em Comm. Partial Differential Equations}, 23(3-4):577--599, 1998.

\bibitem{MR2475557}
David Lloyd and Bj\"{o}rn Sandstede.
\newblock Localized radial solutions of the {S}wift-{H}ohenberg equation.
\newblock {\em Nonlinearity}, 22(2):485--524, 2009.

\bibitem{Chicone}
Carmen Chicone.
\newblock {\em Ordinary differential equations with applications}, volume~34 of {\em Texts in Applied Mathematics}.
\newblock Springer, New York, second edition, 2006.

\bibitem{MR3463691}
Daniel Wilczak and Piotr Zgliczy\'{n}ski.
\newblock Connecting orbits for a singular nonautonomous real {G}inzburg-{L}andau type equation.
\newblock {\em SIAM J. Appl. Dyn. Syst.}, 15(1):495--525, 2016.

\bibitem{MR3917433}
Istv\'{a}n Bal\'{a}zs, Jan~Bouwe van~den Berg, Julien Courtois, J\'{a}nos Dud\'{a}s, Jean-Philippe Lessard, Anett V\"{o}r\"{o}s-Kiss, J.~F. Williams, and Xi~Yuan Yin.
\newblock Computer-assisted proofs for radially symmetric solutions of {PDE}s.
\newblock {\em J. Comput. Dyn.}, 5(1-2):61--80, 2018.

\bibitem{MR4417102}
Jan~Bouwe van~den Berg, Gabriel~William Duchesne, and Jean-Philippe Lessard.
\newblock Rotation invariant patterns for a nonlinear {L}aplace-{B}eltrami equation: a {T}aylor-{C}hebyshev series approach.
\newblock {\em J. Comput. Dyn.}, 9(2):253--278, 2022.

\bibitem{MR3323206}
J.~B. van~den Berg, C.~M. Groothedde, and J.~F. Williams.
\newblock Rigorous computation of a radially symmetric localized solution in a {G}inzburg-{L}andau problem.
\newblock {\em SIAM J. Appl. Dyn. Syst.}, 14(1):423--447, 2015.

\bibitem{MR3454370}
Allan Hungria, Jean-Philippe Lessard, and J.~D. Mireles~James.
\newblock Rigorous numerics for analytic solutions of differential equations: the radii polynomial approach.
\newblock {\em Math. Comp.}, 85(299):1427--1459, 2016.

\bibitem{MR3930829}
Jes\'{u}s Cuevas-Maraver and Panayotis~G. Kevrekidis.
\newblock Discrete breathers in {$\phi^4$} and related models.
\newblock In {\em A dynamical perspective on the {$\phi ^4$} model}, volume~26 of {\em Nonlinear Syst. Complex.}, pages 137--162. Springer, Cham, 2019.

\bibitem{SwiftHohenberg}
J.B. Swift and P.C. Hohenberg.
\newblock Hydrodynamic fluctuations at the convective instability.
\newblock {\em Phys. Rev. A}, 15(1), 1977.

\bibitem{FitzHugh}
Richard FitzHugh.
\newblock Impulses and physiological states in theoretical models of nerve membrane.
\newblock {\em Biophysical Journal}, 1(6):445--466, 1961.

\bibitem{MR3156797}
Peter van Heijster and Bj\"{o}rn Sandstede.
\newblock Coexistence of stable spots and fronts in a three-component {F}itz{H}ugh-{N}agumo system.
\newblock In {\em Far-from-equilibrium dynamics}, RIMS K\^{o}ky\^{u}roku Bessatsu, B31, pages 135--155. Res. Inst. Math. Sci. (RIMS), Kyoto, 2012.

\bibitem{Julia}
Jeff Bezanson, Alan Edelman, Stefan Karpinski, and Viral~B. Shah.
\newblock Julia: a fresh approach to numerical computing.
\newblock {\em SIAM Rev.}, 59(1):65--98, 2017.

\bibitem{RadiiPolynomial.jl}
Olivier H{\'e}not.
\newblock {RadiiPolynomial.jl}.
\newblock https://github.com/OlivierHnt/RadiiPolynomial.jl.

\bibitem{IntervalArithmeticJulia}
L.~Benet and D.P. Sanders.
\newblock Intervalarithmetic.jl.
\newblock https://github.com/JuliaIntervals/IntervalArithmetic.jl, 2022.

\bibitem{Code}
Olivier H{\'e}not.
\newblock {EllipticPDELocalizedRadialSol.jl}.
\newblock https://github.com/OlivierHnt/EllipticPDELocalizedRadialSol.jl, 2022.

\bibitem{MR633878}
Donald~E. Knuth.
\newblock {\em The art of computer programming. {V}ol. 2}.
\newblock Addison-Wesley Series in Computer Science and Information Processing. Addison-Wesley Publishing Co., Reading, Mass., second edition, 1981.
\newblock Seminumerical algorithms.

\bibitem{MR3545977}
Jean-Philippe Lessard, J.~D. Mireles~James, and Julian Ransford.
\newblock Automatic differentiation for {F}ourier series and the radii polynomial approach.
\newblock {\em Phys. D}, 334:174--186, 2016.

\bibitem{MR4292532}
Olivier H\'{e}not.
\newblock On polynomial forms of nonlinear functional differential equations.
\newblock {\em J. Comput. Dyn.}, 8(3):309--323, 2021.

\bibitem{feigenbaum}
Oscar~E. Lanford, III.
\newblock A computer-assisted proof of the {F}eigenbaum conjectures.
\newblock {\em Bull. Amer. Math. Soc. (N.S.)}, 6(3):427--434, 1982.

\bibitem{lorenz}
Warwick Tucker.
\newblock A rigorous {ODE} {S}olver and {S}male's 14th {P}roblem.
\newblock {\em Foundations of Computational Mathematics}, 2(1):53--117--117, 2002-12-21.

\bibitem{MR2019251}
B.~Breuer, P.~J. McKenna, and M.~Plum.
\newblock Multiple solutions for a semilinear boundary value problem: a computational multiplicity proof.
\newblock {\em J. Differential Equations}, 195(1):243--269, 2003.

\bibitem{MR1151060}
Michael Plum.
\newblock Explicit {$H_2$}-estimates and pointwise bounds for solutions of second-order elliptic boundary value problems.
\newblock {\em J. Math. Anal. Appl.}, 165(1):36--61, 1992.

\bibitem{MR2049869}
Piotr Zgliczy\'{n}ski.
\newblock Rigorous numerics for dissipative partial differential equations. {II}. {P}eriodic orbit for the {K}uramoto-{S}ivashinsky {PDE}---a computer-assisted proof.
\newblock {\em Found. Comput. Math.}, 4(2):157--185, 2004.

\bibitem{MR1838755}
Piotr Zgliczy\'{n}ski and Konstantin Mischaikow.
\newblock Rigorous numerics for partial differential equations: the {K}uramoto-{S}ivashinsky equation.
\newblock {\em Found. Comput. Math.}, 1(3):255--288, 2001.

\bibitem{MR2161437}
M.~T. Nakao, K.~Hashimoto, and Y.~Watanabe.
\newblock A numerical method to verify the invertibility of linear elliptic operators with applications to nonlinear problems.
\newblock {\em Computing}, 75(1):1--14, 2005.

\bibitem{MR2136516}
Sarah Day, Yasuaki Hiraoka, Konstantin Mischaikow, and Toshiyuki Ogawa.
\newblock Rigorous numerics for global dynamics: a study of the {S}wift-{H}ohenberg equation.
\newblock {\em SIAM J. Appl. Dyn. Syst.}, 4(1):1--31, 2005.

\bibitem{MR2329522}
Stanislaus Maier-Paape, Konstantin Mischaikow, and Thomas Wanner.
\newblock Structure of the attractor of the {C}ahn-{H}illiard equation on a square.
\newblock {\em Internat. J. Bifur. Chaos Appl. Sci. Engrg.}, 17(4):1221--1263, 2007.

\bibitem{KOCH_ComputeAssisted}
H.~Koch, A.~Schenkel, and P.~Wittwer.
\newblock Computer-assisted proofs in analysis and programming in logic: a case study.
\newblock {\em SIAM Review}, 38(4):565--604, 1996.

\bibitem{NAKAO_VerifiedPDE}
M.~T. Nakao.
\newblock Numerical verification methods for solutions of ordinary and partial differential equations.
\newblock {\em Numerical Functional Analysis and Optimization}, 22(3-4):321--356, 2001.

\bibitem{TUCKER_ValidatedIntroduction}
W.~Tucker.
\newblock {\em Validated numerics: a short introduction to rigorous computations}.
\newblock Princeton University Press, 2011.

\bibitem{VANDENBERG_Dynamics}
J.~B. van~den Berg and Jean-Philippe Lessard.
\newblock Rigorous numerics in dynamics.
\newblock {\em Notices of the AMS}, 62(9):1057--1061, 2015.

\bibitem{GOMEZ_PDESurvey}
J.~G{\'o}mez-Serrano.
\newblock Computer-assisted proofs in {PDE}: a survey.
\newblock {\em SeMA Journal}, pages 1--26, 2018.

\bibitem{MR3971222}
Mitsuhiro~T. Nakao, Michael Plum, and Yoshitaka Watanabe.
\newblock {\em Numerical verification methods and computer-assisted proofs for partial differential equations}, volume~53 of {\em Springer Series in Computational Mathematics}.
\newblock Springer, Singapore, [2019] \copyright 2019.

\bibitem{MR3148084}
Jean-Philippe Lessard and Christian Reinhardt.
\newblock Rigorous numerics for nonlinear differential equations using {C}hebyshev series.
\newblock {\em SIAM J. Numer. Anal.}, 52(1):1--22, 2014.

\bibitem{Makie.jl}
Simon Danisch and Julius Krumbiegel.
\newblock Makie.jl: Flexible high-performance data visualization for julia.
\newblock {\em Journal of Open Source Software}, 6(65):3349, 2021.

\bibitem{MR4292534}
Jan~Bouwe van~den Berg and Ray Sheombarsing.
\newblock Rigorous numerics for odes using {C}hebyshev series and domain decomposition.
\newblock {\em J. Comput. Dyn.}, 8(3):353--401, 2021.

\end{thebibliography}

\end{document}